\numberwithin{equation}{section}
\newtheorem{thm}{Theorem}[section]
\newtheorem{prop}[thm]{Proposition}
\newtheorem{lemm}[thm]{Lemma}
\newtheorem{cor}[thm]{Corollary}
\theoremstyle{remark}
\newtheorem{rem}{Remark}[section]
\newtheorem{defn}{Definition}
\newcommand{\BBB}{\mathbb}
\newcommand{\R}{{\BBB R}}
\newcommand{\Z}{{\BBB Z}}
\newcommand{\N}{{\BBB N}}
\newcommand{\C}{{\BBB C}}
\newcommand{\s}{\mathcal{S}}
\newcommand{\ee}{\mbox{\boldmath $1$}}
\newcommand{\LR}[1]{{\langle {#1} \rangle }}
\newcommand{\cross}{\times}
\newcommand{\al}{\alpha}
\newcommand{\be}{\beta}
\newcommand{\ga}{\gamma}
\newcommand{\e}{\varepsilon}
\newcommand{\ta}{\tau}
\newcommand{\p}{\partial}
\newcommand{\supp}{\operatorname{supp}}
\newcommand{\kuuhaku}{\text{}}
\newcommand{\F}{\mathcal{F}}
\newcommand{\1}{{\mathbf 1}}
\title[Well-posedness for system of qDNLS]{Well-posedness for a 
system of quadratic derivative
nonlinear Schr\"odinger equations with radial initial data
}
\author[H. Hirayama]{Hiroyuki Hirayama}
\address[H. Hirayama]{Organization for Promotion of Tenure Track, University of Miyazaki, 1-1, Gakuenkibanadai-nishi, Miyazaki, 889-2192 Japan}
\email[H. Hirayama]{h.hirayama@cc.miyazaki-u.ac.jp}
\author[S. Kinoshita]{Shinya Kinoshita}
\address[S. Kinoshita]{Universit\"at Bielefeld, 
Fakult\"at f\"ur Mathematik, Postfach 10 01 31 33501, Bielefeld, Germany
}
\email[S. Kinoshita]{kinoshita@math.uni-bielefeld.de}
\author[M. Okamoto]{Mamoru Okamoto}
\address[M. Okamoto]{Division of Mathematics and Physics, Faculty of Engineering, 
Shinshu University, 4-17-1, Wakasato, Nagano, 380-8553, Japan}
\email[M. Okamoto]{m\_okamoto@shinshu-u.ac.jp}
\subjclass[2010]{35Q55}
\keywords{Schr\"odinger equation, well-posedness, Cauchy problem,  Bilinear estimate, radial initial data}
\begin{document}
%
%
\begin{abstract}
In the present paper, 
we consider the Cauchy problem of the system of quadratic derivative nonlinear 
Schr\"odinger equations. 
This system was introduced by M. Colin and T. Colin (2004). 
The first and second authors obtained some well-posedness 
results in the Sobolev space $H^{s}(\R^d)$. 
We improve these results for conditional radial initial data 
by rewriting the system radial form.  
\end{abstract}
\maketitle
\setcounter{page}{001}


\section{Introduction\label{intro}}
We consider the Cauchy problem of the system of nonlinear Schr\"odinger equations:
\begin{equation}\label{NLS_sys}
\begin{cases}
\displaystyle (i\p_{t}+\alpha \Delta )u=-(\nabla \cdot w )v,
\ \ t>0,\ x\in \R^d,\\
\displaystyle (i\p_{t}+\beta \Delta )v=-(\nabla \cdot \overline{w})u,
\ \ t>0,\ x\in \R^d,\\
\displaystyle (i\p_{t}+\gamma \Delta )w =\nabla (u\cdot \overline{v}),
\ \ t>0,\ x\in \R^d,\\
(u, v, w)|_{t=0}=(u_{0}, v_{0}, w_{0})\in (H^s(\R^d))^d\times (H^s(\R^d))^d\times (H^s(\R^d))^d,
\end{cases}
\end{equation}
where $\al$, $\be$, $\ga\in \R\backslash \{0\}$  
and the unknown functions $u$, $v$, $w$ are $d$-dimensional complex vector valued. 
The system (\ref{NLS_sys}) was introduced by Colin and Colin in \cite{CC04} 
as a model of laser-plasma interaction. (See, also \cite{CC06}, \cite{CCO09_1}.)
They also showed that the 
local existence of the solution of (\ref{NLS_sys}) in $H^s(\R^d)$ for $s>\frac{d}{2}+3$. 
The system (\ref{NLS_sys}) is invariant under the following scaling transformation:
\begin{equation}\label{scaling_tr}
A_{\lambda}(t,x)=\lambda^{-1}A(\lambda^{-2}t,\lambda^{-1}x)\ \ (A=(u,v,w) ), 
\end{equation}
and the scaling critical regularity is $s_{c}=\frac{d}{2}-1$. 
We put
\begin{equation}\label{coeff}
\theta :=\alpha\beta\gamma \left(\frac{1}{\alpha}-\frac{1}{\beta}-\frac{1}{\gamma}\right),\ \ 
\kappa :=(\alpha -\beta)(\alpha -\gamma)(\beta +\gamma). 
\end{equation}
We note that $\kappa =0$ does not occur when $\theta \ge 0$ 
for $\al$, $\be$, $\ga\in \R\backslash \{0\}$. 

First, we introduce some known results for related problems. 
%
The system (\ref{NLS_sys}) has quadratic nonlinear terms which contain a derivative. 
A derivative loss arising from the nonlinearity makes the problem difficult. 
In fact, Mizohata (\cite{Mi85}) considered the Schr\"odinger equation
\[
\begin{cases}
i\partial_{t}u-\Delta u=(b_{1}(x)\cdot \nabla ) u,\ t\in \R ,\ x\in \R^{d},\\
u(0,x)=u_{0}(x),\ x\in \R^{d}
\end{cases}
\]
and proved that the uniform bound
\[
\sup_{x\in \R^{n},\omega \in S^{n-1},R>0}\left| {\rm Re}\int_{0}^{R}b_{1}(x+r\omega )\cdot \omega dr\right| <\infty
\]
is a necessary condition for the $L^{2} (\R^d)$ well-posedness. 
Furthermore, Christ (\cite{Ch}) proved that the flow map of 
the nonlinear Schr\"odinger equation
\begin{equation}\label{1dqdnls}
\begin{cases}
i\partial_{t}u-\partial_{x}^{2}u=u\partial_{x}u,\ t\in \R,\ x\in \R,\\
u(0,x)=u_{0}(x),\ x\in \R
\end{cases}
\end{equation}
is not continuous on $H^{s} (\R^d)$ for any $s\in \R$. 
From these results, 
it is difficult to obtain the well-posedness 
for quadratic derivative nonlinear Schr\"odinger equation in general. 
While for the system of quadratic derivative nonlinear equations,
it is known that the well-posedness holds. 
%
In \cite{Hi}, the first author proved the well-posedness of (\ref{NLS_sys}) 
in $H^s(\R^d)$, where $s$ is given in Table~\ref{WP_NLS_sys} below. 
\begin{table}[h]
\begin{center}
\begin{tabular}{|l|l|l|l|l|}
\hline
\multicolumn{2}{|c|}{} & $d=1$ & $d=2,3$ & $d\geq 4$\\
\hline
\multicolumn{2}{|c|}{$\theta > 0$} & \multicolumn{1}{|c|}{WP for $s\geq 0$} &\multicolumn{1}{|c|}{WP for $s\geq s_{c}$} & \multicolumn{1}{|c|}{WP for $s\geq s_{c}$}\\              
\cline{1-4}
\multicolumn{2}{|c|}{$\theta =0$} & \multicolumn{1}{|c|}{WP for $s\geq 1$} & 
WP for $s\geq 1$ & \multicolumn{1}{|c|}{}\\
\cline{2-3}
\multicolumn{2}{|c|}{$\kappa \ne 0$\ and\ $\theta < 0$} & \multicolumn{1}{|c|}{WP for $s\geq \frac{1}{2}$} & &\multicolumn{1}{|c|}{}\\
\hline
\end{tabular}
\caption{Well-posedness (WP for short)  for (\ref{NLS_sys}) proved in \cite{Hi}}
\end{center}
\end{table}\label{WP_NLS_sys}

Recently in \cite{HK}, 
the first and  second authors  improved this result 
by using the generalization of the Loomis-Whitney inequality 
introduced in \cite{BHT10} and \cite{BKW}.  
They proved the
well-posedness of (\ref{NLS_sys}) in $H^s(\R^d)$
for $s\ge \frac{1}{2}$ if $d=2$ and $s>\frac{1}{2}$ if $d=3$, 
under the condition $\kappa \ne 0$ and $\theta < 0$. 
While in \cite{Hi}, the first author also proved that 
the flow map is not $C^2$ for $s<1$ if $\theta = 0$ 
and for $s<\frac{1}{2}$ if $\theta < 0$ and $\kappa \ne 0$. 
Therefore, the well-posedness obtained in \cite{Hi} and \cite{HK} 
are optimal except the case $d=3$ and $s=\frac{1}{2}$ (which is scaling critical)
as far as we use the iteration argument. 
In particular, the optimal regularity are far from the scaling critical regularity 
if $d\le 3$ and $\theta \le 0$. 

We point out that the results in \cite{Hi} and \cite{HK} do not 
contain the scattering of the solution for $d\le 3$ under the condition $\theta =0$ 
(and also $\theta <0$). 
In \cite{IKS13}, Ikeda, Katayama, and Sunagawa considered 
the system of quadratic nonlinear Schr\"odinger equations
\begin{equation}\label{qDNLS}
\left(i\partial_t+\frac{1}{2m_j}\Delta\right)u_j=F_j(u,\partial_xu),\ \ t>0,\ x\in \R^d,\ j=1,2,3, 
\end{equation}
under the mass resonance condition 
$m_1+m_2=m_3$ (which corresponds to the condition $\theta =0$ for (\ref{NLS_sys})), 
where $u=(u_1,u_2,u_3)$ is $\C^3$-valued, 
$m_1$, $m_2$, $m_3\in \R\backslash \{0\}$, and $F_j$ is defined by
\begin{equation}\label{qDNLS_nonlin}
\begin{cases}
F_{1}(u,\partial_xu)=\sum_{|\alpha |, |\beta|\le 1}
C_{1,\alpha,\beta}(\overline{\partial^{\alpha}u_2})(\partial^{\beta}u_3),\\
F_{2}(u,\partial_xu)=\sum_{|\alpha |, |\beta|\le 1}
C_{1,\alpha,\beta}(\partial^{\beta}u_3)(\overline{\partial^{\alpha}u_1}),\\
F_{3}(u,\partial_xu)=\sum_{|\alpha |, |\beta|\le 1}
C_{1,\alpha,\beta}(\partial^{\alpha}u_1)(\partial^{\beta}u_2)
\end{cases}
\end{equation}
with some constants $C_{1,\alpha,\beta}$, $C_{2,\alpha,\beta}$, $C_{3,\alpha,\beta}\in \C$. 
They obtained the small data global existence and the scattering 
of the solution to (\ref{qDNLS})
in the weighted Sobolev space for $d=2$ 
under the mass resonance condition
and the null condition for the nonlinear terms (\ref{qDNLS_nonlin}). 
They also proved the same result for $d\ge 3$ without the null condition. 
In \cite{IKO16}, Ikeda, Kishimoto, and Okamoto proved
the small data global well-posedness and the scattering of the solution 
to (\ref{qDNLS}) in $H^s (\R^d)$ for $d\ge 3$ and $s\ge s_c$ 
under the mass resonance condition
and the null condition for the nonlinear terms (\ref{qDNLS_nonlin}). 
They also proved the local well-posedness in $H^s (\R^d)$
for $d=1$ and $s\ge 0$, $d=2$ and $s>s_c$, and $d=3$ and $s\ge s_c$ 
under the same conditions. 
(The results in \cite{Hi} for $d\le 3$ and $\theta =0$ 
say that if the nonlinear terms do not have null condition, 
then $s=1$ is optimal regularity to obtain the well-posedness 
by using the iteration argument. ) 

Recently in \cite{SaSu}, 
Sakoda and Sunagawa considered 
(\ref{qDNLS}) for $d=2$ and $j=1,\cdots, N$ with
\begin{equation}\label{qDNLS_nonlin_2}
F_j(u,\partial_xu)
=\sum_{|\alpha|, |\beta|\le 1}\sum_{1\le k,l\le 2N}
C^{\alpha, \beta}_{j,k,l}(\partial_x^{\alpha}u_k^{\#})(\partial_x^{\beta}u_l^{\#}),
\end{equation}
where $u_j^{\#}=u_j$ if $j=1,\cdots ,N$, and 
$u_j^{\#}=\overline{u_j}$ if $j=N+1,\cdots ,2N$. 
They obtained the small data global existence and the time decay estimate 
for the solution
under some conditions for $m_1,\cdots m_N$ 
and the nonlinear terms (\ref{qDNLS_nonlin_2}), 
where the conditions contain (\ref{NLS_sys}) with $\theta =0$. 
While, it is known that the existence of the blow up solutions 
for the system of nonlinear Schr\"odinger equations. 
Ozawa and Sunagawa (\cite{OS13}) gave the examples of the derivative nonlinearity which causes the small data blow up for a system of Schr\"odinger equations. 
There are also some known results for a system of nonlinear Schr\"odinger equations with no derivative nonlinearity 
(\cite{HLN11}, \cite{HLO11}, \cite{HOT13}). 
%
%
%
%
%

The aim in the present paper is 
to improve the results in \cite{Hi} and \cite{HK} for conditional radial initial data in $\R^2$ and $\R^3$. 
The radial solution to (\ref{NLS_sys}) is only trivial solution 
since the nonlinear terms of (\ref{NLS_sys}) are not radial form. 
Therefore, we rewrite (\ref{NLS_sys}) into a radial form. 
Here, we focus on $d=2$. 
Let $\s (\R^2)$ denote the Schwartz class. 
If $w=(w_1,w_2)\in (\s (\R^2))^2$ satisfies
\begin{equation}\label{rot_cond}
\xi^{\perp}\cdot \widehat{w}(\xi)=\xi_1\widehat{w_2}(\xi)-\xi_2\widehat{w_1}(\xi)=0,\ \ 
x^{\perp}\cdot w(x)=x_1w_2(x)-x_2w_1(x)=0
\end{equation}
for any $\xi=(\xi_1,\xi_2)\in \R^2$ and $x=(x_1,x_2)\in \R^2$, 
then there exists a
scalar potential $W\in C^1(\R^2)$ 
satisfying 
\begin{equation}\label{sc_po}
\nabla W (x)=w(x),\ \ {}^{\forall}x\in \R^2
\end{equation}
and
\begin{equation}\label{sc_po_rad}
\frac{\partial}{\partial \vartheta}W(r\cos \vartheta, r\sin \vartheta )=0,\ \ 
{}^{\forall}(r,\vartheta)\in [0,\infty)\times [0,2\pi).
\end{equation}
Indeed, if we put
\[
W(x):=\int_{a_1}^{x_1}w_1(y_1,x_2)dy_1+\int_{a_2}^{x_2}w_2(a_1,y_2)dy_2
\]
for some $a_1$, $a_2\in \R$, then $W$ satisfies (\ref{sc_po}) 
by the first equality in (\ref{rot_cond}). 
Furthermore, $W$ also satisfies (\ref{sc_po_rad})
by the second equality in (\ref{rot_cond}). 
We note that the first equality in (\ref{rot_cond}) is equivalent to
\[
\nabla^{\perp}\cdot w(x)=\partial_1w_2(x)-\partial_2w_1(x)=0, 
\]
which is the irrotational condition. 
%
\begin{rem}
If $d=3$, we can also obtain the radial scalar potential 
$W\in C^1(\R^3)$ 
of $w=(w_1,w_2,w_3)\in (\s (\R^3))^3$
by assuming the conditions
\begin{equation}\label{rot_cond_3d}
\xi \times \widehat{w}(\xi)=0,\ x\times w(x)=0
\end{equation}
instead of (\ref{rot_cond}). 
\end{rem}
\begin{defn}\label{radial_def}
We say $f\in \s'(\R^d)$ is radial if it holds that
\[
<f,\varphi \circ R>=<f,\varphi>
\]
for any $\varphi \in \s (\R^d)$ and rotation 
$R:\R^d\rightarrow \R^d$. 
\end{defn}
\begin{rem}
If $f\in L^1_{\rm loc}(\R^d)$, then Definition~\ref{radial_def} 
is equivalent to
\[
{}^\exists g:\R\rightarrow \C\ {\rm s.t.}\  f(x)=g(|x|),\ \ {\rm a.e.}\ x\in \R^d.
\]
\end{rem}
Now, we consider the system of 
nonlinear Schr\"odingers equations:
\begin{equation}\label{NLS_rad}
\begin{cases}
\displaystyle (i\p_{t}+\alpha \Delta )u=-(\Delta W )v,
\ \ t>0,\ x\in \R^d,\\
\displaystyle (i\p_{t}+\beta \Delta )v=-(\Delta \overline{W})u,
\ \ t>0,\ x\in \R^d,\\
\displaystyle (i\p_{t}+\gamma \Delta )\nabla W =\nabla (u\cdot \overline{v}),
\ \ t>0,\ x\in \R^d,\\
(u, v, [W])|_{t=0}=(u_{0}, v_{0}, [W_{0}])\in \mathcal{H}^s(\R^d)
\end{cases}
\end{equation}
instead of (\ref{NLS_sys}), where $d=2$ or $3$, and 
\[
\begin{split}
\mathcal{H}^s(\R^d)&:=(H^s_{\rm rad}(\R^d))^d\times (H^s_{\rm rad}(\R^d))^d\times 
\widetilde{H}^{s+1}_{\rm rad}(\R^d),\\
H^s_{\rm rad}(\R^d)
&:=\{f\in H^s(\R^d)|\ f\ {\rm is\ radial}\},\\
\widetilde{H}^{s+1}(\R^d)&:=\{f\in \s'(\R^d)| \ \nabla f\in (H^s(\R^d))^d\}/\mathcal{N}_0,\\
\mathcal{N}_0
&:= \{f\in \s'(\R^d)|\ \nabla f=0 \}, \\
\widetilde{H}^{s+1}_{\rm rad}(\R^d)
&:=\{[f]\in \widetilde{H}^{s+1}(\R^d)|\ f\ {\rm is\ radial}\}.
\end{split}
\]
The norm for an equivalent class $[f]\in \widetilde{H}^{s+1}(\R^d)$ is 
defined by 
\[
\|[f]\|_{\widetilde{H}^{s+1}}:=\|\nabla f\|_{(H^s)^d}\sim \|f\|_{\dot{H}^{s+1}}
+\|f\|_{\dot{H}^1},
\] 
which is well-defined since $ \widetilde{H}^{s+1}(\R^d)$ 
is a quotient space.
The system (\ref{NLS_rad}) is obtained by substituting 
$w=\nabla W$ and $w_0=\nabla W_0$ in (\ref{NLS_sys}). 
\begin{defn}
We say $(u,v,[W])\in C([0,T];\mathcal{H}^s(\R^d))$ is a solution 
to (\ref{NLS_rad}) if 
\[
\begin{split}
u(t)&=e^{it\alpha \Delta}u_0+i\int_0^te^{i(t-t')\alpha \Delta}(\Delta W(t'))v(t')dt'\ \ 
{\rm in}\ (H^s(\R^d))^d,\\
v(t)&=e^{it\beta \Delta}v_0+i\int_0^te^{i(t-t')\beta \Delta}(\Delta \overline{W(t')})v(t')dt'\ \ {\rm in}\ (H^s(\R^d))^d,\\
\nabla W(t)&=e^{it\gamma \Delta}\nabla W_0-i\int_0^te^{i(t-t')\gamma \Delta}\nabla (u(t')\cdot \overline{v(t')})dt'\ \ {\rm in}\ H^s(\R^d)
\end{split}
\]
hold for any $t\in [0,T]$. 
This definition does not depend on 
how we choose a representative $W$. 
\end{defn}

Now, we give the main results in this paper. 
\begin{thm}\label{wellposed_1}
Assume $\kappa \ne 0$. \\
{\rm (i)}\ Let $d=2$. Assume that 
$s\ge \frac{1}{2}$ if $\theta =0$ and $s>0$ if $\theta <0$. 
Then, {\rm (\ref{NLS_rad})} is locally well-posed in 
$\mathcal{H}^s(\R^2)$. \\
{\rm (ii)}\ Let $d=3$. 
Assume that $\theta \le 0$ and $s\ge \frac{1}{2}$. 
Then, {\rm (\ref{NLS_rad})} is locally well-posed in 
$\mathcal{H}^s(\R^3)$. \\
{\rm (iii)}\ Let $d=3$. 
Assume that $\theta \le 0$ and $s\ge \frac{1}{2}$. 
Then, {\rm (\ref{NLS_rad})} is globally well-posed in 
$\mathcal{H}^s(\R^3)$ for small data. 
Furthermore, the solution scatters in $\mathcal{H}^s(\R^3)$. 
\end{thm}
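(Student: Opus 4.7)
The plan is to recast (\ref{NLS_rad}) as a Duhamel fixed-point problem and run a contraction in critical function spaces adapted to each Schr\"odinger semigroup. Because the three equations propagate with different coefficients $\al,\be,\ga$, I would work in a product $X^{s}_{\al}\times X^{s}_{\be}\times X^{s+1}_{\ga}$ of $U^{2}$/$V^{2}$-type resolution spaces built on $e^{it\al\Delta}$, $e^{it\be\Delta}$, $e^{it\ga\Delta}$, intersected with the radial subspace. This critical setting is essentially forced at the scaling threshold $s=s_c=\tfrac{1}{2}$ in parts (ii)--(iii), and it has the virtue that scattering in (iii) is automatic once the nonlinear map is contractive, since $V^{2}$-atoms possess well-defined limits as $t\to\pm\infty$.

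The analysis then reduces to the bilinear Duhamel estimates controlling $(\Delta W)v$ in $X^{s}_{\al}$, $(\Delta\overline{W})u$ in $X^{s}_{\be}$, and $\nabla(u\cdot\overline{v})$ in $X^{s+1}_{\ga}$. After Littlewood--Paley localization to dyadic frequencies $N_{1},N_{2},N_{3}$ with $\x_{1}+\x_{2}+\x_{3}=0$, each of them reduces to a frequency-localized trilinear bound whose standard modulation gain is governed by the resonance function attached to the three phases $\tau_{j}=-\si_{j}|\x_{j}|^{2}$ (with $\si_{j}\in\{\al,\be,\ga\}$); this quantity is $\gtrsim |\te|N_{\max}^{2}$ when $\te\ne 0$ and degenerates to a lower-order expression controlled by $\ka$ when $\te=0$. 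This is the same resonance dichotomy used in \cite{Hi,HK}.

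To beat the thresholds proved in those works, I would exploit radiality in two complementary ways. First, since every factor is zonal, improved radial Strichartz estimates enlarge the admissible $L^{q}_{t}L^{r}_{x}$ range for the Schr\"odinger propagators, which immediately lowers the required regularity in the non-resonant regimes. Second, the radial constraint forces all three interacting frequencies onto concentric spheres, so on the characteristic sets $\tau_{j}=-\si_{j}|\x_{j}|^{2}$ only the angular directions can vary; combined with the generalized Loomis--Whitney bounds of \cite{BHT10,BKW,HK}, this yields a quantitative transversality gain beyond what is available without radial symmetry. Putting these together with the resonance analysis above is expected to produce frequency-localized trilinear bounds with dyadic losses summable down to the regularities in the theorem.

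The main obstacle is the scaling-critical, mass-resonant case $d=3$, $\te=0$, $s=\tfrac{1}{2}$ of part (iii), where no modulation gain is available and one must rely entirely on radial-angular transversality. I would isolate the parallel-frequency regime where $\x_{1},\x_{2},\x_{3}$ are nearly collinear; here the combination of the outer $\na$ in the $W$-equation and the radial gradient condition $\na W\in H^{s}$ forces only the radial component of $\na(u\cdot\overline{v})$ to survive, which pairs favorably with the radial structure of $\Delta W\cdot v$ in the first two equations and produces the extra gain needed to close the estimate. Once the frequency-localized bounds are in place with summable dyadic losses, the standard contraction argument simultaneously yields local existence for (i)--(ii), small-data global existence for (iii), and $V^{2}$-scattering of the global solutions.
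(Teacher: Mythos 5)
Your overall architecture (Duhamel fixed point, dyadic trilinear estimates, a resonance/modulation dichotomy, and an appeal to radial symmetry) points in the right direction, and the choice of $U^2/V^2$ spaces instead of the paper's $X^{s,b}_{\sigma}$ spaces (with $b>\frac12$, closing the contraction via a factor $T^{4b-2}$) is a legitimate variant. But the proposal is missing the one ingredient that actually produces the improvement over \cite{Hi} and \cite{HK} in $d=2$: the angular equidistribution estimate for radial functions. After the decomposition $u=\sum_j R_j^A u$, the paper uses that a radial function spreads its $L^2$ mass evenly over the $\sim A$ angular sectors, i.e.\ $\|R_j^A Q_L^{\sigma}P_N u\|_{L^2_{tx}}\lesssim A^{-\frac12}\|Q_L^{\sigma}P_N u\|_{L^2_{tx}}$ (estimate (\ref{rad_l2est})), while Lemma~\ref{angle_prop} (for $\widetilde\theta=0$) and the $|j_1-j_2|\lesssim 1$ structure (for $\widetilde\theta<0$) confine the resonant interactions to $O(1)$ sector pairs per $j_1$. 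It is precisely the resulting net factor $A^{-1/2}$ per function, with $A\sim L_{\max}^{-1/2}N_{\max}$, that lowers the threshold from $s\geq 1$ to $s\geq\frac12$ when $\theta=0$ and from $s\geq\frac12$ to $s>0$ when $\theta<0$. Your ``transversality gain from concentric spheres'' gestures at this but gives no quantitative substitute, so the low-modulation, comparable-frequency regime does not close below the non-radial thresholds. Relatedly, your resonance dichotomy is misstated: the lower bound $\gtrsim|\theta|N_{\max}^2$ for the resonance function holds for $\theta>0$, not for $\theta<0$; in the case $\theta<0$ covered by the theorem there is a genuine resonant hypersurface, and that is exactly where the angularly localized bilinear estimates and the Loomis--Whitney input of \cite{HK} (Theorem~\ref{thm-0.3} and Proposition~\ref{thm2.6} here) are indispensable.

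The treatment of the hardest case is also not a proof. For $d=3$ at $s=s_c=\frac12$, the observation that ``only the radial component of $\nabla(u\cdot\overline{v})$ survives'' is a tautology --- the gradient of a radial function is automatically radial --- and yields no gain whatsoever; nothing in that paragraph recovers the derivative loss in the parallel-frequency regime. The paper's actual mechanism in $d=3$ is entirely different: one substitutes Shao's radially improved Strichartz estimate $\|e^{it\Delta}P_N u_0\|_{L^3_{t,x}}\lesssim N^{-1/6}\|P_N u_0\|_{L^2}$ into the trilinear estimate of Proposition~4.4~(i) of \cite{Hi}, where the factor $N_{\max}^{1/2}\prod_j N_j^{1/6}$ exactly compensates the derivative for $N_1\sim N_2\sim N_3$; local and small-data global well-posedness and scattering then follow from the scheme of \cite{Hi}. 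Finally, you do not address the quotient-space formulation for the potential $W$ (the spaces $\widetilde{X}^{s+1,b}_{\gamma,\mathrm{rad}}$ and the norm $\|[U]\|=\|\nabla U\|$), which is needed to make sense of the fixed-point problem for the third equation.
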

\begin{rem}
$s=0$ for $d=2$, and $s=\frac{1}{2}$ for $d=3$ are 
scaling critical regularity for (\ref{NLS_sys}). 
\end{rem}
While, we obtain the following. 
\begin{thm}\label{ill-posed}
Let $d=2$  
and $\theta =0$. 
Then, the flow map of (\ref{NLS_rad}) is not $C^2$ in $\mathcal{H}^s(\R^2)$ for $s<\frac{1}{2}$. 
\end{thm}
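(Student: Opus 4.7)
The plan is to follow the standard Bourgain--Tzvetkov reduction of $C^2$-regularity of the flow map at the origin to the boundedness of the second Picard iterate, and then to construct an explicit radial counterexample. Formally expanding the solution map, the $u$-component of the second Fr\'echet derivative in the direction $(0, v_0, [W_0])$ produces, up to a constant, the bilinear operator
\[
B(v_0, W_0)(t) := i\int_{0}^{t} e^{i(t-t')\alpha \Delta}\bigl((\Delta W^{(0)}(t'))\, v^{(0)}(t')\bigr)\,dt',
\]
where $W^{(0)}(t) = e^{it\gamma \Delta}W_{0}$ and $v^{(0)}(t) = e^{it\beta \Delta}v_{0}$. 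If the flow map were $C^2$ on $\mathcal{H}^s(\R^2)$ then, for each fixed $t>0$, $B$ would have to be continuous from $\widetilde{H}^{s+1}_{\rm rad}(\R^2) \times (H^s_{\rm rad}(\R^2))^2$ into $(H^s_{\rm rad}(\R^2))^2$; it therefore suffices to produce radial data with bounded norms for which $\|B(v_0, W_0)(t)\|_{(H^s)^2}$ diverges.

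On the Fourier side one has
\[
\widehat{B(v_0, W_0)}(t, \xi) = -e^{-it\alpha |\xi|^2}\int |\xi_1|^2\, \widehat{W_0}(\xi_1)\, \widehat{v_0}(\xi-\xi_1)\,\frac{e^{it\Phi(\xi,\xi_1)} - 1}{\Phi(\xi,\xi_1)}\, d\xi_1,
\]
with $\Phi(\xi,\xi_1) = \alpha|\xi|^2 - \gamma|\xi_1|^2 - \beta|\xi-\xi_1|^2$. The hypothesis $\theta = 0$ is equivalent to $\alpha = \beta\gamma/(\beta+\gamma)$, which allows one to complete the square and obtain
\[
\Phi(\xi,\xi_1) = -\frac{1}{\beta+\gamma}\,\bigl|(\beta+\gamma)\xi_1 - \beta\xi\bigr|^2,
\]
so the resonance set $\{\Phi = 0\}$ collapses onto the single ray $\xi_1 = \mu\xi$ with $\mu := \beta/(\beta+\gamma)$, and $(e^{it\Phi}-1)/\Phi \sim it$ near this ray.

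For the counterexample, fix a large dyadic $N$, let $e_1$ denote a fixed coordinate unit vector, and set
\[
\widehat{W_0}(\xi_1) = c_N\, \chi_{\{|\xi_1|\in [\mu N,\,\mu N + 1]\}},\qquad \widehat{v_0}(\xi) = c'_N\, e_1\, \chi_{\{|\xi|\in [(1-\mu)N,\,(1-\mu)N + 1]\}},
\]
with $c_N \sim N^{-s-3/2}$ and $c'_N \sim N^{-s-1/2}$, so that $W_0 \in \widetilde{H}^{s+1}_{\rm rad}$ and each component of $v_0$ is radial and lies in $H^s_{\rm rad}$, all of unit norm. For any output $\xi$ in the shell $|\xi|\in [N,N+1]$, the near-resonance disk $\{|(\beta+\gamma)\xi_1 - \beta\xi|\lesssim 1\}$ intersects both $\supp\widehat{W_0}$ and the translated set $\{\xi_1 : \xi-\xi_1 \in \supp\widehat{v_0}\}$ in a region of $\xi_1$-area $\sim 1$; on this region the integrand is nonnegative and bounded below by $c\, t\, N^2 c_N c'_N \sim c\, t\, N^{-2s}$. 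Squaring, weighting by $|\xi|^{2s}$, and integrating over the output shell (area $\sim N$) then gives
\[
\|B(v_0, W_0)(t)\|_{(H^s)^2} \gtrsim t\, N^{1/2 - s},
\]
which diverges as $N\to\infty$ for every $s < 1/2$, contradicting the assumed $C^2$-regularity.

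The main technical obstacle is the radiality constraint itself. The non-radial counterexample of \cite{Hi} places $\widehat{W_0}$ and $\widehat{v_0}$ on thin parallelepipeds aligned with the resonance direction and attains ill-posedness at $s<1$. Radial symmetry forbids this and forces the Fourier supports to be smeared over full annular shells, costing an extra factor of $N^{1/2}$ in the normalization---precisely the angular dimension of the circle in $\R^2$, which accounts for the exact shift of the critical index from $1$ to $1/2$. The key point is that choosing the indicators nonnegative ensures the angular averaging induces no phase cancellations, so the quantitative lower bound above really does survive the spherical smearing.
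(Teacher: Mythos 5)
Your overall strategy coincides with the paper's: reduce $C^2$-regularity to boundedness of the second Picard iterate, put the two input functions on annuli of radii chosen so that the resonant interaction $\xi_1=\mu\xi$ lands the output on an annulus of radius $\sim N$, and exploit radiality to convert a pointwise lower bound on the output into an $L^2$ lower bound with an extra factor $N^{1/2}$ (this is exactly the paper's step $\|F\|_{L^2}\gtrsim N^{1/2}\inf_{D}|F|$). Your identification of the resonance via $\theta=0\Rightarrow \Phi=-(\beta+\gamma)|\xi_1-\mu\xi|^2$ is correct and is the same completion of the square as the paper's $\Phi=(\alpha-\gamma)|\eta-p(\xi-\eta)|^2$ (you test the $u$-equation rather than the $W$-equation, which is immaterial).

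However, there is a genuine gap in the quantitative lower bound. You bound the integrand from below only on the near-resonance disk $\{|\xi_1-\mu\xi|\lesssim 1\}$ and then assert that positivity of the indicator data prevents cancellation. That is not enough: the full convolution region (the intersection of the two annuli) is a tangency lens of transverse extent $\sim N^{1/2}$, on which $|\Phi|=|\beta+\gamma|\,|\xi_1-\mu\xi|^2$ ranges up to $\sim N$, so $t\Phi$ is large over most of the region and the kernel $(e^{it\Phi}-1)/\Phi$ (in particular its imaginary part $\sin(t\Phi)/\Phi$) changes sign many times there. Nonnegativity of $\widehat{W_0}$ and $\widehat{v_0}$ does not make the integrand single-signed, so the contribution of the disk $|\xi_1-\mu\xi|\lesssim 1$ is not a lower bound for the modulus of the whole integral; a priori the oscillatory tail could cancel it. Controlling this tail is precisely the technical core of the paper's proof: the decomposition of the integration region into $E_1,\dots,E_4$, the smallness estimates $I_2+I_3\lesssim tN^{-1+3\epsilon}$ and $I_4\lesssim tN^{-\epsilon}$, and above all the alternating-series argument showing
\[
\int_0^{(1+p)N^{\epsilon}}\frac{\sin\bigl(\tau(q+\eta_2^2)\bigr)}{q+\eta_2^2}\,d\eta_2\gtrsim \tau^{\frac12},
\]
uniformly in $q\in[0,C]$, which yields $|F(\xi_c)|\gtrsim t^{1/2}$ rather than your $t\cdot N^{-2s}$-per-frequency bound. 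Without an argument of this kind your inequality $\|B(v_0,W_0)(t)\|_{(H^s)^2}\gtrsim t\,N^{1/2-s}$ is unproved, even though the conclusion (divergence for $s<1/2$) is the correct one. To repair the proposal you would need to either (a) carry out the sign analysis of the oscillatory tail as the paper does, or (b) insert a sharp frequency cutoff at scale $|\xi_1-\mu\xi|\lesssim 1$ into the data themselves, which is incompatible with keeping both inputs supported on full radial annuli of width one.
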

\begin{rem}
Theorem~\ref{ill-posed} says that the well-posedness in Theorem~\ref{wellposed_1} 
for $\theta =0$ is optimal as far as we use the iteration argument. 
\end{rem}
\begin{rem}
It is interesting that the result for 2D radial initial data is better than 
that for 1D initial data. 
Actually, the optimal regularity for 1D initial data is 
$s= 1$ if $\theta =0$, and $s= \frac{1}{2}$ if $\theta <0$ and $\kappa \ne 0$, 
which are larger than the optimal regularity for 2D radial initial data. 
The reason is the following. 
We use the angular decomposition and each angular localized 
term has a better property. 
For radial functions, the angular localized bound leads to 
an estimate for the original functions. (See, (\ref{rad_l2est}) below.)
\end{rem}
We note that if $\nabla W_0=w_0$ holds and
$(u,v,[W])$ is a solution to (\ref{NLS_rad}) 
with $(u,v,[W])|_{t=0}=(u_0,v_0,[W_0])\in \mathcal{H}^s(\R^d)$, then 
$(u,v,\nabla W)$ is a solution to (\ref{NLS_sys}) 
with $(u,v,\nabla W)|_{t=0}=(u_0,v_0,w_0)\in  (H^s_{\rm rad}(\R^d))^d\times (H^s_{\rm rad}(\R^d))^d\times H^s(\R^d)$. 
The existence of a scalar potential $W_0\in \widetilde{H}^{s+1}_{\rm rad}(\R^d)$ 
will be proved for $w_0\in  \mathcal{A}^s(\R^d)$ with $s>\frac{1}{2}$ (See, Proposition~\ref{ex_sc}), where
\[
\begin{split}
\mathcal{A}^s(\R^2)
&:=\{f=(f_1,f_2)\in (H^s(\R^2))^2|\ f\ {\rm satisfies}\ (\ref{rot_cond})\ {\rm a.e.}\ 
x, \xi \in \R^2\},\\
\mathcal{A}^s(\R^3)
&:=\{f=(f_1,f_2,f_3)\in (H^s(\R^3))^3|\ f\ {\rm satisfies}\ (\ref{rot_cond_3d})\ {\rm a.e.}\ 
x, \xi \in \R^3\}.
\end{split}
\]
Therefore, we obtain the following.
\begin{thm}\label{wellposed_2}
Let $d=2$ or $3$. 
Assume that $\theta = 0$ and $s>\frac{1}{2}$. 
Then, {\rm (\ref{NLS_sys})} is locally well-posed in 
$(H^s_{\rm rad}(\R^d))^d\times (H^s_{\rm rad}(\R^d))^d\times \mathcal{A}^s(\R^d)$. 
\end{thm}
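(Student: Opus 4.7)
The plan is to reduce Theorem~\ref{wellposed_2} to Theorem~\ref{wellposed_1} via the scalar-potential reformulation (\ref{NLS_rad}). Starting from data $(u_{0},v_{0},w_{0}) \in (H^{s}_{\rm rad}(\R^{d}))^{d} \times (H^{s}_{\rm rad}(\R^{d}))^{d} \times \mathcal{A}^{s}(\R^{d})$ with $s > \tfrac{1}{2}$ and $\theta = 0$, I would first invoke Proposition~\ref{ex_sc} to produce a radial scalar potential $W_{0} \in \widetilde{H}^{s+1}_{\rm rad}(\R^{d})$ with $\nabla W_{0} = w_{0}$; by the definition of the norm on $\widetilde{H}^{s+1}$ the lift $w_{0}\mapsto [W_{0}]$ is an isometry, so $(u_{0},v_{0},[W_{0}]) \in \mathcal{H}^{s}(\R^{d})$. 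Note that $\theta = 0$ forces $\kappa \ne 0$ by the remark after (\ref{coeff}), and the range $s > \tfrac{1}{2}$ is covered by Theorem~\ref{wellposed_1}(i) when $d=2$ and by Theorem~\ref{wellposed_1}(ii) when $d=3$. Thus Theorem~\ref{wellposed_1} yields a unique solution $(u,v,[W]) \in C([0,T];\mathcal{H}^{s}(\R^{d}))$ of (\ref{NLS_rad}).

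Setting $w := \nabla W \in C([0,T];(H^{s}(\R^{d}))^{d})$, the triple $(u,v,w)$ satisfies (\ref{NLS_sys}): the $u$- and $v$-equations of (\ref{NLS_rad}) coincide with those of (\ref{NLS_sys}) under the substitution $\Delta W = \nabla\cdot w$, and the Duhamel identity for $\nabla W$ built into the definition of a solution of (\ref{NLS_rad}) is precisely the Duhamel identity for $w$ in (\ref{NLS_sys}). To stay inside the claimed space I must verify $w(t,\cdot) \in \mathcal{A}^{s}(\R^{d})$ for every $t$: the irrotational half of (\ref{rot_cond}) (resp.\ of (\ref{rot_cond_3d})) is automatic because $w = \nabla W$ is a gradient field; the radial half, $x^{\perp}\cdot w = 0$ (resp.\ $x \times w = 0$), reduces to showing $W(t,\cdot)$ remains radial, which follows from the rotation invariance of (\ref{NLS_rad}) combined with the uniqueness in Theorem~\ref{wellposed_1}. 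Continuous dependence of the data-to-solution map is then the composition $w_{0} \mapsto [W_{0}] \mapsto (u,v,[W]) \mapsto (u,v,\nabla W)$ of two isometries and the Lipschitz flow from Theorem~\ref{wellposed_1}.

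The step I expect to be the main obstacle is uniqueness in the full class of solutions to (\ref{NLS_sys}): given another solution $(\tilde{u},\tilde{v},\tilde{w}) \in C([0,T];(H^{s}_{\rm rad})^{d}\times (H^{s}_{\rm rad})^{d} \times \mathcal{A}^{s})$ with the same data, I must first show that $\tilde{w}(t,\cdot) \in \mathcal{A}^{s}$ is preserved for all $t > 0$, not merely at $t=0$, so that Proposition~\ref{ex_sc} supplies a scalar potential $\widetilde{W}(t)$ at each time slice, which by the correspondence above must solve (\ref{NLS_rad}) with data $(u_{0},v_{0},[W_{0}])$; uniqueness in Theorem~\ref{wellposed_1} then closes the argument. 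Propagation of the irrotational condition follows by applying $\nabla^{\perp}\cdot$ in $d=2$ (resp.\ $\nabla\times$ in $d=3$) to the $w$-equation in (\ref{NLS_sys}): since the nonlinearity $\nabla(\tilde{u}\cdot\overline{\tilde{v}})$ is itself a gradient, its curl vanishes, and so $\nabla^{\perp}\cdot\tilde{w}$ (resp.\ $\nabla\times\tilde{w}$) satisfies a linear Schr\"odinger equation with zero initial data and therefore vanishes identically. Propagation of the radial symmetry of $\tilde{u}$, $\tilde{v}$ and of the second identity in (\ref{rot_cond}) / (\ref{rot_cond_3d}) for $\tilde{w}$ is handled analogously via rotation invariance of (\ref{NLS_sys}) combined with the uniqueness already established for (\ref{NLS_rad}).
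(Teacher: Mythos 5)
Your proposal follows essentially the same route as the paper: lift $w_0$ to a radial scalar potential via Proposition~\ref{ex_sc}, solve (\ref{NLS_rad}) by Theorem~\ref{wellposed_1}, set $w=\nabla W$ (so the irrotational and angular conditions hold automatically because $W$ is radial), and prove uniqueness and continuous dependence by lifting competitor solutions back to potentials. The one caveat is that the paper's uniqueness for (\ref{NLS_rad}) is conditional---it is stated only in a ball $B_R(\mathcal{Y}^{s,b}_T)$ of the Bourgain space, not in all of $C([0,T];(H^s_{\rm rad})^d\times(H^s_{\rm rad})^d\times\mathcal{A}^s)$---so your uniqueness claim must be restricted to that class, in which case your extra curl-propagation argument, while correct, is not needed because the constraint (\ref{rot_cond}) for all $t$ is already built into the uniqueness class.
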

\begin{rem}
For $d=3$, Theorem~\ref{wellposed_1} can be obtained by almost the same 
way as in \cite{Hi}. 
In Proposition~4.4 (i) of \cite{Hi} , the author used 
the Strichartz estimate
\[
\|e^{it\Delta}P_{N}u_0\|_{L^q_tL^r_x(\R\times \R^d)}
\lesssim \|P_{N}u_0\|_{L^2}
\]
and 
\[
\left|N_{\rm max}\int_0^T\int_{\R^d}(P_{N_1}u_1)(P_{N_2}u_2)(P_{N_3}u_3)dxdt\right|
\lesssim 
N_{\rm \max}^{s_c}\prod_{j=1}^3\|P_{N_j}u_j\|_{L^q_tL^r_x}
\]
with an admissible pair $(q,r)=(3,\frac{6d}{3d-4})$ for $d\ge 4$. 
But this trilinear estimate does not hold for $d=3$. 
This is the reason why the well-posedness in $H^{s_c}(\R^3)$ 
could not be obtained in \cite{Hi}. 
For the radial function $u_0\in L^2(\R^3)$, it is known that the improved 
Strichartz estimate (\cite{Sh09}, Corollary~6.2)
\[
\|e^{it\Delta}P_{N}u_0\|_{L^3_{t,x}(\R\times \R^3)}
\lesssim N^{-\frac{1}{6}}\|P_{N}u_0\|_{L^2}.
\]
While, it holds that
\[
\left|N_{\rm max}\int_0^T\int_{\R^3}(P_{N_1}u_1)(P_{N_2}u_2)(P_{N_3}u_3)dxdt\right|
\lesssim N_{\rm max}^{\frac{1}{2}}\prod_{j=1}^3N_j^{\frac{1}{6}}\|P_{N_j}u_j\|_{L^3_{t,x}}
\]
for $N_1\sim N_2\sim N_3\ge 1$. 
Therefore, for $d=3$, we can obtain the same estimate 
in Proposition~4.4 (i). 
Because of  such reason, 
we omit more detail of the proof for $d=3$, 
and only consider $d=2$ in the following sections.  
\end{rem}
%
%
%
%
%
%
%
%
%
\noindent {\bf Notation.} 
We denote the spatial Fourier transform by\ \ $\widehat{\cdot}$\ \ or $\F_{x}$, 
the Fourier transform in time by $\F_{t}$ and the Fourier transform in all variables by\ \ $\widetilde{\cdot}$\ \ or $\F_{tx}$. 
For $\sigma \in \R$, the free evolution $e^{it\sigma \Delta}$ on $L^{2}$ is given as a Fourier multiplier
\[
\F_{x}[e^{it\sigma \Delta}f](\xi )=e^{-it\sigma |\xi |^{2}}\widehat{f}(\xi ). 
\]
We will use $A\lesssim B$ to denote an estimate of the form $A \le CB$ for some constant $C$ and write $A \sim B$ to mean $A \lesssim B$ and $B \lesssim A$. 
We will use the convention that capital letters denote dyadic numbers, e.g. $N=2^{n}$ for $n\in \N_0:=\N\cup\{0\}$ and for a dyadic summation we write
$\sum_{N}a_{N}:=\sum_{n\in \N_0}a_{2^{n}}$ and $\sum_{N\geq M}a_{N}:=\sum_{n\in \N_0, 2^{n}\geq M}a_{2^{n}}$ for brevity. 
Let $\chi \in C^{\infty}_{0}((-2,2))$ be an even, non-negative function such that $\chi (t)=1$ for $|t|\leq 1$. 
We define $\psi (t):=\chi (t)-\chi (2t)$, 
$\psi_1(t):=\chi (t)$, and $\psi_{N}(t):=\psi (N^{-1}t)$ for $N\ge 2$. 
Then, $\sum_{N}\psi_{N}(t)=1$.  
We define frequency and modulation projections
\[
\widehat{P_{N}u}(\xi ):=\psi_{N}(\xi )\widehat{u}(\xi ),\ 
\widetilde{Q_{L}^{\sigma}u}(\tau ,\xi ):=\psi_{L}(\tau +\sigma |\xi|^{2})\widetilde{u}(\tau ,\xi ).
\]
Furthermore, we define $Q_{\geq M}^{\sigma}:=\sum_{L\geq M}Q_{L}^{\sigma}$ and $Q_{<M}:=Id -Q_{\geq M}$. 

The rest of this paper is planned as follows.
In Section 2, we will give the bilinear estimates which will be used to prove the well-posedness.
In Section 3, we will give the proof of Theorems~\ref{wellposed_1} and ~\ref{wellposed_2}. 
In Section 4, we will give the proof of Theorem~\ref{ill-posed}. 
\section{Bilinear estimates \label{be_for_2d}}
%
%
In this section, we prove the bilinear estimates. 
First, we define the radial condition for time-space function.  
\begin{defn}
We say $u\in \s'(\R_t\times \R_x^2)$ is radial with respect to $x$ if it holds that
\[
<u,\varphi_R>=<u,\varphi>
\]
for any $\varphi \in \s (\R_t\times \R_x^2)$ and rotation 
$R:\R^2\rightarrow \R^2$, where $\varphi_R\in \s (\R_t\times \R_x^2)$ 
is defined by $\varphi_R(t,x)=\varphi (t,R(x))$. 
\end{defn}
Next, we define the Fourier restriction norm, 
which was introduced by Bourgain in \cite{Bo93}. 
\begin{defn}
Let $s\in \R$, $b\in \R$, $\sigma \in \R\backslash \{0\}$. \\
(i)\ We define $X^{s,b}_{\sigma}:=\{u\in \s'(\R_t\times \R_x^2)|\ \|u\|_{X^{s,b}_{\sigma}}<\infty\}$,  
where
\[
\begin{split}
\|u\|_{X^{s,b}_{\sigma}}&:=
\|\langle \xi \rangle^s\langle \tau +\sigma |\xi|^2\rangle^b\widetilde{u}(\tau,\xi)\|_{L^2_{\tau\xi}}
\sim \left(\sum_{N\ge 1}
\sum_{L\ge 1}N^{2s}L^{2b}\|Q_{L}^{\sigma}P_{N}u\|_{L^2}^2\right)^{\frac{1}{2}}. 
\end{split}
\]
(ii)\ We define $\widetilde{X}^{s+1,b}_{\sigma}:=\{u\in \s'(\R_t\times \R_x^2)|\ \nabla u\in X^{s,b}_{\sigma}\}/ \mathcal{N}$ with the norm
\[
\|[u]\|_{\widetilde{X}^{s+1,b}_{\sigma}}:=\|\nabla u\|_{X^{s,b}_{\sigma}},
\]
where $\mathcal{N} := \{u\in \s'(\R_t\times \R_x^2)|\ \nabla u =0 \}$.
\\
(iii)\ We define
\[
\begin{split}
X^{s,b}_{\sigma,{\rm rad}}&:=
\{u\in X^{s,b}_{\sigma}|\ u\ {\rm is\ radial\ with\ respect\ to}\ x\},\\
\widetilde{X}^{s,b}_{\sigma,{\rm rad}}&:=
\{[u]\in \widetilde{X}^{s+1,b}_{\sigma}|\ u\ {\rm is\ radial\ with\ respect\ to}\ x\}.
\end{split}
\]
\end{defn}
We put
\[
\widetilde{\theta} :=\sigma_1\sigma_2\sigma_3 \left(\frac{1}{\sigma_1}+\frac{1}{\sigma_2}+\frac{1}{\sigma_3}\right),\ \ 
\widetilde{\kappa} :=(\sigma_1+\sigma_2)(\sigma_2 +\sigma_3)(\sigma_3 +\sigma_1). 
\]
We note that if $(\sigma_1,\sigma_2,\sigma_3)\in \{(\beta, \gamma, -\alpha), 
(-\gamma, \alpha, -\beta), (\alpha, -\beta, -\gamma)\}$, 
then it hold that $\widetilde{\theta}=\theta$ and $|\widetilde{\kappa}|=|\kappa|$. 

The following bilinear estimate plays a central role to show Theorem \ref{wellposed_1}.
\begin{prop}\label{key_be}
Let $\sigma_1$, $\sigma_2$, $\sigma_3\in \R\backslash \{0\}$ satisfy 
$\widetilde{\kappa}\ne 0$. 
Let $s\ge \frac{1}{2}$ if $\widetilde{\theta}=0$ 
and $s>0$ if $\widetilde{\theta}<0$. 
Then there exists $b'\in (0,\frac{1}{2})$ and $C>0$ such that
\begin{align}
\||\nabla |(uv)\|_{X^{s,-b'}_{-\sigma_3}}
&\le C\|u\|_{X^{s,b'}_{\sigma_1}}\|v\|_{X^{s,b'}_{\sigma_2}},
 \label{be_111}\\
\|(\Delta U)v\|_{X^{s,-b'}_{-\sigma_3}}
&\le C(\|\partial_1 U\|_{X^{s,b'}_{\sigma_1}}
+\|\partial_2 U\|_{X^{s,b'}_{\sigma_1}})\|v\|_{X^{s,b'}_{\sigma_2}}
\label{be_222}
\end{align}
hold for any $u\in X^{s,b'}_{\sigma_1,{\rm rad}}$, 
$v\in X^{s,b'}_{\sigma_2,{\rm rad}}$, and $[U]\in \widetilde{X}^{s+1,b'}_{\sigma_1,{\rm rad}}$.
\end{prop}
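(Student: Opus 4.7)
The proof plan follows the standard dyadic $X^{s,b}$ bilinear framework, refined by an angular decomposition adapted to the radial hypothesis. First I would Littlewood--Paley decompose in frequency ($P_{N_j}$) and modulation ($Q^{\sigma_j}_{L_j}$), and by duality against a test function $w$ of unit norm in $X^{-s,b'}_{-\sigma_3}$ reduce \eqref{be_111} to the trilinear bound
\[
\Big| N_3 \int \bigl(Q^{\sigma_1}_{L_1} P_{N_1} u\bigr) \bigl(Q^{\sigma_2}_{L_2} P_{N_2} v\bigr) \bigl(Q^{-\sigma_3}_{L_3} P_{N_3} w\bigr) \, dt\, dx \Big| \lesssim C(N,L) \prod_{j=1}^{3} \|Q^{\sigma_j}_{L_j} P_{N_j}(\cdot)\|_{L^2},
\]
with $C(N,L)$ summable once the weights $N_j^{s}$ and $L_j^{b'}$ are applied. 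Case analysis splits into the three resonant frequency configurations (HHL, HLH, LHH). In every case the modulation identity
\[
\max_j L_j \;\gtrsim\; |\Psi(\xi_1,\xi_2,\xi_3)|, \qquad \Psi := \sigma_1|\xi_1|^2 + \sigma_2|\xi_2|^2 - \sigma_3|\xi_1+\xi_2|^2,
\]
together with the algebraic factorization of $\Psi$ in terms of $\widetilde{\theta}$ and $\widetilde{\kappa}$ (as derived in \cite{Hi, HK}), translates smallness of the modulations into quantitative angular constraints on the $\xi_j$.

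The radial hypothesis enters through an angular decomposition $u = \sum_\theta u^{K}_\theta$, where $u^{K}_\theta$ is frequency-supported in a sector of angular aperture $K^{-1}$ centered at direction $\theta$. For radial $u$ all sectors carry identical $L^{2}$-mass, and with $\sim K$ of them this yields
\[
\|u\|_{L^{2}}^{2} \;\sim\; K\, \|u^{K}_\theta\|_{L^{2}}^{2} \qquad \text{for every } \theta.
\]
Consequently any bilinear estimate proved on a pair of transversal angular sectors of width $K^{-1}$ transfers to the full radial functions with a net gain of $K$, which is the source of the improvement over the non-radial results of \cite{Hi, HK}. The plan is to choose $K$ equal to the angular width forced by the resonance analysis and then invoke the Loomis--Whitney-type bilinear estimate of \cite{BHT10, BKW} (as adapted to this system in \cite{HK}) in order to quantify the transversal gain that $\widetilde{\kappa}\neq 0$ provides.

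The main obstacle is the high-high-low resonant interaction $N_1 \sim N_2 \gg N_3$ with all three modulations minimal: here $|\nabla|$ in \eqref{be_111} supplies only the factor $N_3$ while the sought bound requires decay of strength $N_{1}^{-2s}$, and this is precisely the regime where the non-radial analysis of \cite{HK} fails to reach $s=\tfrac{1}{2}$ (resp.\ $s=0$). The resonance-forced angular width is $K^{-1}\sim N_3/N_1$ when $\widetilde{\theta}=0$ and strictly finer when $\widetilde{\theta}<0$; inserting this $K$ into the Loomis--Whitney bound and combining with the radial $L^{2}$-identity above should yield exactly the gain needed for $s\geq \tfrac{1}{2}$ (respectively $s>0$). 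Estimate \eqref{be_222} is handled by the same angular/resonance analysis: the extra $|\xi_1|^{2}$ weight coming from $\Delta U$ is compared against the $|\nabla U|$-norm on the right-hand side by using $|\xi_1| \sim |\xi_2|$ in the HHL regime (the only resonant case in which the two weights actually differ), while radial symmetry of $v$ allows reassigning the remaining component derivative to $\nabla v$, after which the same transversal angular argument closes the estimate.
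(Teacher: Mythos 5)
Your overall framework (duality, dyadic decomposition in frequency and modulation, angular sectors of width $K^{-1}$, the radial identity $\|u\|_{L^2}^2\sim K\|u_\theta^K\|_{L^2}^2$, and the Loomis--Whitney input from \cite{BHT10,HK}) matches the paper's toolkit, but you have misidentified the regime where the difficulty lives, and this propagates into a wrong choice of the angular parameter. You declare the main obstacle to be the high-high-low interaction $N_1\sim N_2\gg N_3$ with all modulations minimal, and you calibrate the sector width as $K^{-1}\sim N_3/N_1$. Under the standing hypothesis $\widetilde{\kappa}\neq 0$, Lemma~4.1 of \cite{Hi} (Lemma~\ref{modul_est} here, used in the paper in contrapositive form as Lemma~\ref{modul_est_2}) shows that any frequency separation $|\xi_i|\ll|\xi_j|$ forces $L_{\max}\gtrsim N_{\max}^2$; hence the HHL configuration with small modulations simply does not occur, and the high-modulation regime is disposed of by the bilinear Strichartz estimate (Corollary~\ref{L2be_2}) with no radial structure at all. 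The genuinely hard case --- and the only place radiality is used --- is $N_1\sim N_2\sim N_3$ with $L_{\max}\ll N_{\max}^2$. There your prescription $K\sim N_1/N_3\sim 1$ yields no gain, so the argument as outlined does not close.

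In the comparable-frequency low-modulation regime the paper's choices are different from yours in both cases. For $\widetilde{\theta}=0$ the resonance forces the three frequencies to be nearly parallel or antiparallel (Lemma~\ref{angle_prop}), so there is no transversality and Loomis--Whitney cannot be the mechanism; instead one takes $A=L_{\max}^{-1/2}N_{\max}$, applies H\"older with the $L^4$ Strichartz bound on each sector, and extracts the decisive factor $A^{-1/2}$ purely from the radial concentration estimate (\ref{rad_l2est}), which is what produces $N_1^{1/2}(L_1L_2L_3)^{5/12}$ and hence $s\ge\frac12$. For $\widetilde{\theta}<0$ one must sum over all dyadic angular scales $64\le A\le N_1$, treating the nearly-parallel sectors by Theorem~\ref{thm-0.3} and the transversal sectors at each scale by the Loomis--Whitney-based Proposition~\ref{thm2.6}, with the radial estimate absorbing the sum over sector pairs at cost $\log N_1$; this is what reaches $s>0$. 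Your treatment of (\ref{be_222}) by "reassigning a component derivative to $\nabla v$" is also not needed and not obviously legitimate: the paper simply writes $\Delta U=\partial_1(\partial_1U)+\partial_2(\partial_2U)$ and feeds $\partial_jU$ into the same trilinear estimate, the outer derivative being dominated by the $N_{\max}$ already present in (\ref{desired_est}). To repair your plan, replace the HHL analysis by the observation that $\widetilde{\kappa}\neq0$ reduces the low-modulation case to comparable frequencies, and recalibrate $A$ as above in each of the two sign cases of $\widetilde{\theta}$.
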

\begin{rem}
Since $\|\partial_1(uv)\|_{X^{s,-b'}_{-\sigma_3}}
+\|\partial_2(uv)\|_{X^{s,-b'}_{-\sigma_3}}\sim 
\||\nabla |(uv)\|_{X^{s,-b'}_{-\sigma_3}}$, 
(\ref{be_111}) implies
\[
\|\partial_1(uv)\|_{X^{s,-b'}_{-\sigma_3}}
+\|\partial_2(uv)\|_{X^{s,-b'}_{-\sigma_3}}
\le C\|u\|_{X^{s,b'}_{\sigma_1}}\|v\|_{X^{s,b'}_{\sigma_2}}. 
\]
\end{rem}
To prove Proposition~\ref{key_be}, we first give the Strichartz estimate. 
\begin{prop}[Strichartz estimate (cf. \cite{GV85}, \cite{KT98})]\label{Stri_est}
Let $\sigma \in \R\backslash \{0\}$ and $(p,q)$ be an admissible pair of exponents for the 2D Schr\"odinger equation, i.e. $p>2$, 
$\frac{1}{p}+\frac{1}{q}=\frac{1}{2}$. Then, we have
\[
\|e^{it\sigma \Delta}\varphi \|_{L_{t}^{p}L_{x}^{q}(\R\times \R^2)}\lesssim \|\varphi \|_{L^{2}_{x}(\R^2)}.
\]
for any $\varphi \in L^{2}(\R^{2})$. 
\end{prop}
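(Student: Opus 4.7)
The plan is the classical Ginibre-Velo proof of non-endpoint Strichartz estimates via the $TT^*$ method, specialized to $d=2$. The hypothesis $p>2$ places us strictly in the non-endpoint regime, so the argument splits cleanly into three steps and avoids the harder Keel-Tao endpoint machinery altogether. The case $p=\infty$, $q=2$ is trivial from the $L^2$ isometry, so I concentrate on $2<p<\infty$.

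First, I would establish the pointwise-in-time dispersive decay
\[
\|e^{it\sigma \Delta}\varphi\|_{L^\infty_x(\R^2)} \lesssim |\sigma t|^{-1}\|\varphi\|_{L^1_x(\R^2)}, \qquad t\neq 0,
\]
directly from the explicit Schr\"odinger kernel $K_t(x,y)=(4\pi i\sigma t)^{-1}e^{i|x-y|^2/(4\sigma t)}$. Combined with the $L^2$ isometry $\|e^{it\sigma \Delta}\varphi\|_{L^2_x} = \|\varphi\|_{L^2_x}$, Riesz-Thorin interpolation gives
\[
\|e^{it\sigma \Delta}\varphi\|_{L^q_x(\R^2)} \lesssim |\sigma t|^{-(1-2/q)}\|\varphi\|_{L^{q'}_x(\R^2)}
\]
for every $2\le q\le \infty$, where $q'$ is the H\"older conjugate.

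Second, I would run the $TT^*$ argument. Setting $T\varphi := e^{it\sigma\Delta}\varphi$, the target inequality is equivalent by duality to $\|TT^* F\|_{L^p_tL^q_x} \lesssim \|F\|_{L^{p'}_tL^{q'}_x}$, where
\[
(TT^*F)(t,\cdot) = \int_{\R} e^{i(t-s)\sigma\Delta}F(s,\cdot)\, ds.
\]
Inserting the fixed-time bound from the first step inside the integral yields the pointwise-in-$t$ inequality
\[
\|(TT^*F)(t)\|_{L^q_x} \lesssim \int_{\R} |t-s|^{-(1-2/q)}\|F(s)\|_{L^{q'}_x}\, ds.
\]

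Third, I would close with the one-dimensional Hardy-Littlewood-Sobolev inequality applied to the time convolution above. The admissibility condition $\frac{1}{p}+\frac{1}{q}=\frac{1}{2}$ rewrites as $1-\frac{2}{q}=\frac{2}{p}$, so the kernel $|t-s|^{-2/p}$ lies in the open HLS range precisely when $1<p'<p<\infty$, i.e. exactly when $p>2$. HLS then delivers $\|TT^*F\|_{L^p_tL^q_x}\lesssim \|F\|_{L^{p'}_tL^{q'}_x}$ and dualizing gives the claim. There is no substantive obstacle under the stated hypothesis; the only delicate point is that the 2D endpoint $(p,q)=(2,\infty)$ fails, which is why the statement only asserts $p>2$, and this is consistent with the HLS exponent restriction.
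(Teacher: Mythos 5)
Your proof is correct and is precisely the classical non-endpoint argument of Ginibre--Velo that the paper invokes by citation (the paper gives no proof of its own, referring to \cite{GV85} and \cite{KT98}): dispersive estimate from the explicit kernel, interpolation with the $L^2$ isometry, $TT^*$ reduction, and one-dimensional Hardy--Littlewood--Sobolev, with the exponent bookkeeping ($1-2/q=2/p$, HLS valid exactly for $p>2$) all checking out. Nothing further is needed.
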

The Strichartz estimate implies the following. 
(See the proof of Lemma\ 2.3 in \cite{GTV97}.)
\begin{cor}\label{Bo_Stri}
Let $L\in 2^{\N_0}$, $\sigma \in \R\backslash \{0\}$, 
and $(p,q)$ be an admissible pair of exponents for the Schr\"odinger equation. 
Then, we have
\begin{equation}\label{Stri_est_2}
\|Q_{L}^{\sigma}u\|_{L_{t}^{p}L_{x}^{q}}\lesssim L^{\frac{1}{2}}\|Q_{L}^{\sigma}u\|_{L^{2}_{tx}}.
\end{equation}
for any $u \in L^{2}(\R\times \R^{2})$. 
\end{cor}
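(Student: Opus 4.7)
The plan is to realize $Q_L^\sigma u$ as a superposition of free Schr\"odinger evolutions indexed by the modulation parameter, and then to apply Proposition~\ref{Stri_est} pointwise in that parameter, closing up via Minkowski's inequality and the Cauchy--Schwarz inequality against the compact support of the modulation cutoff $\psi_L$.

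More precisely, set $v := Q_L^\sigma u$, so that $\widetilde{v}(\tau,\xi)=\psi_L(\tau+\sigma|\xi|^2)\widetilde{u}(\tau,\xi)$. Changing variables $\tau\mapsto \mu-\sigma|\xi|^2$ in the Fourier inversion formula, I would rewrite
\[
v(t,x)=\int_{\R} e^{it\mu}\bigl(e^{it\sigma\Delta}g_\mu\bigr)(x)\,d\mu,
\qquad \widehat{g_\mu}(\xi):=\widetilde{v}(\mu-\sigma|\xi|^2,\xi).
\]
By construction $\widehat{g_\mu}(\xi)=\psi_L(\mu)\widetilde{u}(\mu-\sigma|\xi|^2,\xi)$, so $g_\mu$ vanishes unless $|\mu|\lesssim L$, and Plancherel together with the substitution $\mu\mapsto \tau+\sigma|\xi|^2$ gives
\[
\int_{\R}\|g_\mu\|_{L^2_x}^2\,d\mu=\|\widetilde{v}\|_{L^2_{\tau\xi}}^2=\|v\|_{L^2_{tx}}^2.
\]

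Now I would apply Minkowski's inequality in $\mu$ to move the $L^p_t L^q_x$ norm inside the integral, and invoke Proposition~\ref{Stri_est} for each fixed $\mu$ (since the phase $e^{it\mu}$ is a harmless unit multiplier):
\[
\|v\|_{L^p_tL^q_x}\le \int_{|\mu|\lesssim L}\|e^{it\sigma\Delta}g_\mu\|_{L^p_tL^q_x}\,d\mu
\lesssim \int_{|\mu|\lesssim L}\|g_\mu\|_{L^2_x}\,d\mu.
\]
Finally, Cauchy--Schwarz in $\mu$ exploiting the support condition produces the factor $L^{1/2}$:
\[
\int_{|\mu|\lesssim L}\|g_\mu\|_{L^2_x}\,d\mu
\lesssim L^{1/2}\Bigl(\int_{\R}\|g_\mu\|_{L^2_x}^2\,d\mu\Bigr)^{1/2}=L^{1/2}\|v\|_{L^2_{tx}},
\]
which is exactly \eqref{Stri_est_2}.

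There is no essential obstacle here: the only point requiring minor care is the application of Minkowski, which strictly speaking needs $p,q\ge 1$ (guaranteed by the admissibility $p>2$, $q<\infty$), and the bookkeeping with the change of variables $\tau\leftrightarrow\mu-\sigma|\xi|^2$, which is linear in $\tau$ for fixed $\xi$ and hence preserves the $L^2$ norm. The argument is exactly the classical one from \cite{GTV97} referenced in the statement.
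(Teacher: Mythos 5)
Your argument is correct and is exactly the standard foliation proof that the paper invokes by referring to Lemma~2.3 of \cite{GTV97}: writing $Q_L^{\sigma}u$ as a superposition over the modulation variable $\mu$ of modulated free evolutions, applying Proposition~\ref{Stri_est} for each fixed $\mu$, and closing with Minkowski and Cauchy--Schwarz over the support $|\mu|\lesssim L$. No discrepancies with the intended proof.
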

Next, we give the bilinear Strichartz estimate.
\begin{prop}\label{L2be}
We assume that $\sigma_{1}$, $\sigma_{2}\in \R \backslash \{0\}$ 
satisfy $\sigma_1+\sigma_2\ne 0$. 
For any dyadic numbers $N_1$, $N_2$, $N_3\in 2^{\N_0}$ 
and $L_1$, $L_2\in 2^{\N_0}$, we have
\begin{equation}\label{L2be_est}
\begin{split}
&\|P_{N_3}(Q_{L_1}^{\sigma_1}P_{N_1}u_{1}\cdot Q_{L_2}^{\sigma_2}P_{N_2}u_{2})\|_{L^{2}_{tx}(\R\times \R^2)}\\
&\lesssim \left(\frac{N_{\min}}{N_{\max}}\right)^{\frac{1}{2}}L_1^{\frac{1}{2}}L_2^{\frac{1}{2}}
\|Q_{L_1}^{\sigma_1}P_{N_1}u_{1}\|_{L^2_{tx}(\R\times \R^2)}\|Q_{L_2}^{\sigma_2}P_{N_2}u_{2}\|_{L^2_{tx}(\R\times \R^2)}, 
\end{split}
\end{equation}
where $N_{\min}=\displaystyle \min_{1\le i\le 3}N_i$, 
$N_{\max}=\displaystyle \max_{1\le i\le 3}N_i$.
\end{prop}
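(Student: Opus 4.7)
The plan is to reduce \eqref{L2be_est} to a geometric measure estimate via Plancherel and Cauchy--Schwarz, exploiting the transversality afforded by $\sigma_1+\sigma_2\ne 0$.

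Setting $F_j:=|\widetilde{Q_{L_j}^{\sigma_j}P_{N_j}u_j}|$, which is supported on the thickened paraboloid $\{(\tau,\xi):|\xi|\sim N_j,\ |\tau+\sigma_j|\xi|^2|\lesssim L_j\}$, Plancherel shows that the left-hand side of \eqref{L2be_est} is controlled by $\|\psi_{N_3}(\xi)(F_1*F_2)\|_{L^2_{\tau,\xi}}$. Applying Cauchy--Schwarz to the convolution pointwise in $(\tau,\xi)$ reduces matters to showing
\[
\sup_{|\xi|\sim N_3,\,\tau\in\R}|E(\tau,\xi)|\lesssim \frac{N_{\min}}{N_{\max}}L_1L_2,
\]
where $E(\tau,\xi)$ is the set of $(\tau_1,\xi_1)$ for which $F_1(\tau_1,\xi_1)F_2(\tau-\tau_1,\xi-\xi_1)\ne 0$.

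Performing the $\tau_1$ integration first contributes a factor of $\min(L_1,L_2)$ along with the resonance constraint $|\tau+\phi(\xi_1)|\lesssim L_1+L_2$, where $\phi(\xi_1):=\sigma_1|\xi_1|^2+\sigma_2|\xi-\xi_1|^2$. It remains to show
\[
|A(\tau,\xi)|:=\bigl|\{\xi_1:|\xi_1|\sim N_1,\,|\xi-\xi_1|\sim N_2,\,|\tau+\phi(\xi_1)|\lesssim L_1+L_2\}\bigr|\lesssim \tfrac{N_{\min}}{N_{\max}}(L_1+L_2).
\]
Using $\sigma_1+\sigma_2\ne 0$, completing the square yields $\phi(\xi_1)=(\sigma_1+\sigma_2)|\xi_1-\xi_0|^2+c(\xi)$ with $\xi_0:=\tfrac{\sigma_2}{\sigma_1+\sigma_2}\xi$, so the $\phi$-constraint confines $\xi_1$ to an annulus around $\xi_0$ of some radius $\rho_0$ and thickness $\sim(L_1+L_2)/(|\sigma_1+\sigma_2|\rho_0)$.

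The measure bound on $|A|$ then follows from a case analysis on the three dyadic scales; the triangle inequality forces the two largest of $N_1,N_2,N_3$ to be comparable. When $N_{\min}\in\{N_1,N_2\}$, the annular condition on whichever of $|\xi_1|,|\xi-\xi_1|$ carries the smallest scale intersects the circle of radius $\rho_0$ around $\xi_0$ (with $|\xi_0|\sim N_{\max}$) in an arc of angular opening $\sim N_{\min}/N_{\max}$, and a direct annulus-arc area computation yields the claim.

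The main obstacle is the high-high to low regime $N_{\min}=N_3\ll N_1\sim N_2$, in which both annular conditions become automatic on the $\phi$-annulus (since $|\xi_0|\lesssim N_3\ll \rho_0\sim N_1$) and the crude argument only produces $|A|\lesssim L_1+L_2$. To recover the missing factor $N_3/N_1$ I would pre-decompose $u_1,u_2$ into angular Fourier sectors of opening $\sim N_3/N_1$; since $|\xi_1+\xi_2|\sim N_3$ forces the two sectors to be nearly antipodal, only $O(N_1/N_3)$ sector pairs contribute, and within each pair the angular localization plays the role of the missing annular restriction so the preceding arc-area argument applies. Quasi-orthogonality in the output frequency then assembles the individual sector bounds into the desired gain, completing the proof.
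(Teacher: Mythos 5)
Your argument is correct, and it reaches the estimate by a genuinely different route from the one the paper relies on. The paper simply invokes Lemma~1 of \cite{CDKS01} (cf.\ Lemma~3.1 of \cite{Hi}), whose proof handles the decisive high--high--low interaction $N_3=N_{\min}\ll N_1\sim N_2$ by tiling the $\xi_1$-annulus into \emph{cubes} of side $\sim N_3$ and then performing a change of variables whose Jacobian $2|\sigma_1\xi_1^{(1)}-\sigma_2\xi_2^{(1)}|\sim N_{\max}$ is where the hypothesis $\sigma_1+\sigma_2\ne 0$ enters; the comparable-frequency case is disposed of by the $L^4$ Strichartz estimate. You instead run a single Cauchy--Schwarz/sublevel-set scheme throughout: completing the square in $\phi(\xi_1)=\sigma_1|\xi_1|^2+\sigma_2|\xi-\xi_1|^2$ (again using $\sigma_1+\sigma_2\ne0$) confines $\xi_1$ to a thin annulus about $\xi_0=\tfrac{\sigma_2}{\sigma_1+\sigma_2}\xi$, and the gain $(N_{\min}/N_{\max})^{1/2}$ comes from an annulus--arc area computation, supplemented in the high--high--low case by an \emph{angular} sector decomposition of opening $\sim N_3/N_1$. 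This is more geometric and arguably more uniform across cases; the paper's version is more mechanical but avoids any case where a secondary decomposition must be reassembled. One imprecision to fix: in your final assembly the sector outputs $P_{N_3}(R^A_{j}u_1\cdot R^A_{j'}u_2)$ are all Fourier-supported in the same ball $|\xi|\lesssim N_3$, so there is no quasi-orthogonality \emph{in the output frequency}; the summation over the $O(N_1/N_3)$ nearly antipodal pairs must instead be closed by Cauchy--Schwarz in the sector index $j$ together with the almost-orthogonality of the \emph{input} decompositions, $\sum_j\|R^A_j u_i\|_{L^2}^2\lesssim\|u_i\|_{L^2}^2$ --- exactly as the cube decomposition is summed in \cite{CDKS01}. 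With that correction the proof is complete.
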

Proposition~\ref{L2be} can be obtained by the same way 
as Lemma\ 1 in \cite{CDKS01}. (See, also Lemma 3.1 in \cite{Hi}.)

\begin{cor}\label{L2be_2}
Let $b'\in (\frac{1}{4},\frac{1}{2})$, 
and 
$\sigma_{1}$, $\sigma_{2}\in \R \backslash \{0\}$ satisfy $\sigma_1+\sigma_2\ne 0$, 
We put $\delta =\frac{1}{2}-b'$. 
For any dyadic numbers $N_1$, $N_2$, $N_3\in 2^{\N_0}$ 
and $L_1$, $L_2\in 2^{\N_0}$, we have
\begin{equation}\label{L2be_est_2}
\begin{split}
&\|P_{N_3}(Q_{L_1}^{\sigma_1}P_{N_1}u_{1}\cdot Q_{L_2}^{\sigma_2}P_{N_2}u_{2})\|_{L^{2}_{tx}(\R\times \R^2)}\\
&\lesssim 
N_{\min}^{4 \delta}
\left(\frac{N_{\min}}{N_{\max}}\right)^{\frac{1}{2}- 2\delta}L_1^{b'}L_2^{b'}
\|Q_{L_1}^{\sigma_1}P_{N_1}u_{1}\|_{L^2_{tx}(\R\times \R^2)}\|Q_{L_2}^{\sigma_2}P_{N_2}u_{2}\|_{L^2_{tx}(\R\times \R^2)}.  
\end{split}
\end{equation}
\end{cor}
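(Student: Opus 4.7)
The plan is to derive \eqref{L2be_est_2} by combining Proposition~\ref{L2be} with an auxiliary bilinear estimate that trades the dyadic gain $(N_{\min}/N_{\max})^{1/2}$ and the $L_j^{1/2}$-factors for a plain factor $N_{\min}L_1^{1/4}L_2^{1/4}$. Specifically, I would first establish
\[
\|P_{N_3}(Q_{L_1}^{\sigma_1}P_{N_1}u_1 \cdot Q_{L_2}^{\sigma_2}P_{N_2}u_2)\|_{L^2_{tx}} \lesssim N_{\min}L_1^{1/4}L_2^{1/4}\|Q_{L_1}^{\sigma_1}P_{N_1}u_1\|_{L^2_{tx}}\|Q_{L_2}^{\sigma_2}P_{N_2}u_2\|_{L^2_{tx}}.
\]
Granting this, write $C_A = (N_{\min}/N_{\max})^{1/2}L_1^{1/2}L_2^{1/2}$ for the constant in \eqref{L2be_est} and $C_D = N_{\min}L_1^{1/4}L_2^{1/4}$ for the auxiliary one. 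The elementary inequality $\min(C_A,C_D)\le C_A^{1-\theta}C_D^{\theta}$ applied with $\theta=4\delta\in(0,1)$ produces the right-hand side of \eqref{L2be_est_2}: the $L_j$-exponent becomes $(1-\theta)/2+\theta/4=1/2-\theta/4=b'$, and the combined $N$-factor becomes $N_{\min}^{1/2+\theta/2}N_{\max}^{-(1-\theta)/2}=N_{\min}^{4\delta}(N_{\min}/N_{\max})^{1/2-2\delta}$.

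For the auxiliary estimate, the key time-side input is the modulation $L^4_tL^2_x$ bound
\[
\|Q_L^\sigma u\|_{L^4_tL^2_x}\lesssim L^{1/4}\|Q_L^\sigma u\|_{L^2_{tx}},
\]
which I would derive by interpolating the bound $\|Q_L^\sigma u\|_{L^\infty_tL^2_x}\lesssim L^{1/2}\|Q_L^\sigma u\|_{L^2_{tx}}$---a direct consequence of the representation $u(t)=\int_{|\mu|\sim L}e^{it\mu}e^{-it\sigma\Delta}f_\mu\,d\mu$ together with Cauchy--Schwarz in $\mu$---with the trivial $L^2_tL^2_x$ identity via $\|\phi\|_{L^4_t}\le\|\phi\|_{L^\infty_t}^{1/2}\|\phi\|_{L^2_t}^{1/2}$. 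Combined with the sharp two-dimensional Bernstein inequalities $\|P_Nf\|_{L^\infty_x}\lesssim N\|P_Nf\|_{L^2_x}$ and its dual $\|P_Ng\|_{L^2_x}\lesssim N\|P_Ng\|_{L^1_x}$, H\"older's inequality then yields the auxiliary bound: when $\min(N_1,N_2)=N_{\min}$ I would apply $\|u_1u_2\|_{L^2_{tx}}\le\|u_1\|_{L^4_tL^\infty_x}\|u_2\|_{L^4_tL^2_x}$ and place Bernstein in $x$ on the factor carrying the smaller spatial frequency, whereas when $N_3=N_{\min}$ (which forces $N_1\sim N_2\gg N_3$) I would instead use the dual Bernstein on $P_{N_3}$ together with $\|u_1u_2\|_{L^1_x}\le\|u_1\|_{L^2_x}\|u_2\|_{L^2_x}$ and close via the $L^4_tL^2_x$ bound on each factor.

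The only genuinely delicate point is this second sub-case: in the regime $N_1\sim N_2\gg N_3$ neither $u_1$ nor $u_2$ carries the smallest frequency, so Bernstein applied to either input would produce the unwanted factor $N_{\max}$, and the resolution is to exploit the spatial-frequency localization of the output through the dual Bernstein on $P_{N_3}$. Once the auxiliary estimate is in place, the rest is purely algebraic bookkeeping of exponents.
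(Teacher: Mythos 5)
Your proposal is correct and follows the same overall strategy as the paper's argument (taken from Corollary~2.5 of \cite{HK}): one interpolates the transversal bilinear estimate \eqref{L2be_est} with the cruder bound $N_{\min}\,L_1^{1/4}L_2^{1/4}$, and your bookkeeping of exponents via $\min(C_A,C_D)\le C_A^{1-4\delta}C_D^{4\delta}$ is exactly right, including the identification $L$-exponent $=\tfrac12-\delta=b'$ and $N$-factor $=N_{\min}^{4\delta}(N_{\min}/N_{\max})^{1/2-2\delta}$. The only place where you genuinely deviate is the proof of the auxiliary estimate in the regime $N_1\sim N_2\gg N_3$: the paper first performs an almost-orthogonal decomposition of the input frequency supports into cubes of side $\sim N_{\min}$ and then applies spatial Bernstein at scale $N_{\min}$ to \emph{both} factors together with the $(p,q)=(\infty,2)$ case of Corollary~\ref{Bo_Stri}, whereas you avoid the cube decomposition entirely by putting the dual Bernstein inequality $\|P_{N_3}F\|_{L^2_x}\lesssim N_3\|F\|_{L^1_x}$ on the output and closing with $L^4_t\times L^4_t$ H\"older and the $L^4_tL^2_x$ modulation bound. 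Both routes are elementary and give the same constant $N_{\min}^{d/2}L_1^{1/4}L_2^{1/4}$ with $d=2$; your version is slightly cleaner in that it dispenses with the orthogonality bookkeeping, while the paper's cube decomposition is the device it already set up for Proposition~\ref{L2be} and so costs nothing extra there. No gaps.
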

The proof is given in Corollary~2.5 in \cite{HK}.
\subsection{The estimates for low modulation}
In this subsection, we assume that $L_{\textnormal{max}} \ll N_{\max}^2$. 
%
\begin{lemm}\label{modul_est_2}
We assume that $\sigma_1$, $\sigma_2$, $\sigma_3 \in \R \setminus \{0 \}$ satisfy 
$\widetilde{\kappa}\neq 0$ and $(\tau_{1},\xi_{1})$, $(\tau_{2}, \xi_{2})$, $(\tau_{3}, \xi_{3})\in \R\times \R^{2}$ satisfy $\tau_{1}+\tau_{2}+\tau_{3}=0$, $\xi_{1}+\xi_{2}+\xi_{3}=0$. 
If $\displaystyle \max_{1\leq j\leq 3}|\tau_{j}+\sigma_{j}|\xi_{j}|^{2}|
\ll \max_{1\leq j\leq 3}|\xi_{j}|^{2}$ then we have
\begin{equation*}
|\xi_1| \sim |\xi_2| \sim |\xi_3|.
\end{equation*}
\end{lemm}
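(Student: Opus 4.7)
\medskip

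\noindent\textbf{Proof plan.} The strategy is the standard resonance identity combined with a case analysis driven by the hypothesis $\widetilde{\kappa}\neq 0$. First I would exploit the time constraint $\tau_1+\tau_2+\tau_3=0$ to rewrite
\[
\sigma_1|\xi_1|^2+\sigma_2|\xi_2|^2+\sigma_3|\xi_3|^2
=\sum_{j=1}^3\bigl(\tau_j+\sigma_j|\xi_j|^2\bigr).
\]
The modulation hypothesis then gives the key bound
\[
\Bigl|\sigma_1|\xi_1|^2+\sigma_2|\xi_2|^2+\sigma_3|\xi_3|^2\Bigr|
\;\le\;3\max_{j}|\tau_j+\sigma_j|\xi_j|^2|
\;\ll\;\max_{j}|\xi_j|^2.
\]

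Next, arguing by contradiction, I would assume without loss of generality that $|\xi_1|=\max_j|\xi_j|=:N_{\max}$ and that one of the other two frequencies, say $|\xi_2|$, satisfies $|\xi_2|\ll N_{\max}$. The spatial constraint $\xi_1+\xi_2+\xi_3=0$ forces $|\xi_3|=|\xi_1+\xi_2|\sim N_{\max}$, and substituting $\xi_3=-\xi_1-\xi_2$ into the resonance function yields
\[
\sigma_1|\xi_1|^2+\sigma_2|\xi_2|^2+\sigma_3|\xi_3|^2
=(\sigma_1+\sigma_3)|\xi_1|^2+(\sigma_2+\sigma_3)|\xi_2|^2+2\sigma_3\,\xi_1\cdot\xi_2.
\]
Here is where $\widetilde{\kappa}\neq 0$ enters: it implies $\sigma_1+\sigma_3\neq 0$, so the first term has size $\sim N_{\max}^2$, while the remaining two terms are $O(N_{\max}\,|\xi_2|)=o(N_{\max}^2)$. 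The left-hand side is therefore comparable to $N_{\max}^2$, contradicting the bound obtained in the first step. The symmetric case $|\xi_3|\ll N_{\max}$ is handled identically after substituting $\xi_2=-\xi_1-\xi_3$, with the coefficient of the dominant term being $(\sigma_1+\sigma_2)\neq 0$, again by $\widetilde{\kappa}\neq 0$.

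Hence neither $|\xi_2|$ nor $|\xi_3|$ can be much smaller than $N_{\max}$, and since each of them is bounded above by $N_{\max}$, the only remaining possibility is $|\xi_1|\sim|\xi_2|\sim|\xi_3|$, as claimed. The argument is essentially a one-page computation; the one place that requires attention is simply verifying that the vanishing of the relevant pairwise sum $\sigma_i+\sigma_j$ corresponds exactly to $\widetilde{\kappa}=0$, so that the non-degeneracy hypothesis $\widetilde{\kappa}\neq 0$ does rule out the high-high-low scenarios. I do not anticipate any genuine obstacle.
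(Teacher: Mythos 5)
Your proposal is correct and is essentially the argument the paper relies on: the paper omits the proof by observing that the statement is the contrapositive of Lemma~4.1 in \cite{Hi}, and your computation (the resonance identity $\sum_j(\tau_j+\sigma_j|\xi_j|^2)=\sigma_1|\xi_1|^2+\sigma_2|\xi_2|^2+\sigma_3|\xi_3|^2$, elimination of the smallest frequency, and the observation that $\widetilde{\kappa}\neq 0$ makes every pairwise sum $\sigma_i+\sigma_j$ nonvanishing so the dominant term is $\sim N_{\max}^2$) is precisely the standard proof of that contrapositive. No gaps.
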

Since the above lemma is the contrapositive of the following lemma which was utilized in \cite{Hi}, we omit the proof. 
\begin{lemm}[Lemma\ 4.1\ in \cite{Hi}]\label{modul_est}
We assume that $\sigma_{1}$, $\sigma_{2}$, $\sigma_{3} \in \R \backslash \{0\}$ satisfy $\widetilde{\kappa}\neq 0$ and $(\tau_{1},\xi_{1})$, $(\tau_{2}, \xi_{2})$, $(\tau_{3}, \xi_{3})\in \R\times \R^{2}$ satisfy $\tau_{1}+\tau_{2}+\tau_{3}=0$, $\xi_{1}+\xi_{2}+\xi_{3}=0$.  
If there exist $1\leq i,j\leq 3$ such that $|\xi_{i}|\ll |\xi_{j}|$, then we have
\begin{equation}\label{modulation_est}
\max_{1\leq j\leq 3}|\tau_{j}+\sigma_{j}|\xi_{j}|^{2}|
\gtrsim \max_{1\leq j\leq 3}|\xi_{j}|^{2}. 
\end{equation}
\end{lemm}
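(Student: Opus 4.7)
\textbf{Proof plan for Lemma~\ref{modul_est}.} The plan is to reduce the modulation bound to an algebraic lower bound on the Schr\"odinger resonance function
\[
H(\xi_1,\xi_2,\xi_3) := \sum_{j=1}^3 \sigma_j |\xi_j|^2,
\]
and then exploit $\widetilde{\kappa}\neq 0$ together with the frequency imbalance to bound $|H|$ from below by $\max_j |\xi_j|^2$.

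First I would use the time-resonance identity. Because $\tau_1+\tau_2+\tau_3=0$,
\[
\sum_{j=1}^3 \bigl(\tau_j+\sigma_j|\xi_j|^2\bigr) \;=\; H(\xi_1,\xi_2,\xi_3),
\]
so by the triangle inequality
\[
\max_{1\le j\le 3}\bigl|\tau_j+\sigma_j|\xi_j|^2\bigr| \;\ge\; \tfrac{1}{3}\,|H(\xi_1,\xi_2,\xi_3)|.
\]
Thus it suffices to prove that, under the hypothesis of the lemma, $|H|\gtrsim \max_j|\xi_j|^2$.

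Next I would carry out a short case analysis on $\xi_1+\xi_2+\xi_3=0$. Relabeling if necessary, assume $|\xi_1|\le|\xi_2|\le|\xi_3|$ and $|\xi_1|\ll|\xi_3|$ (which is the content of the hypothesis after relabeling). The constraint $\xi_2+\xi_3=-\xi_1$ forces $|\xi_2|\sim|\xi_3|$: if instead $|\xi_2|\ll|\xi_3|$ or $|\xi_2|\sim|\xi_1|$, then $|\xi_3|\le|\xi_1|+|\xi_2|\ll|\xi_3|$, which is absurd. Hence $|\xi_1|\ll|\xi_2|\sim|\xi_3|\sim\max_j|\xi_j|$.

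Then I would substitute $\xi_3=-\xi_1-\xi_2$ into $H$ and expand:
\[
H \;=\; (\sigma_1+\sigma_3)|\xi_1|^2 + (\sigma_2+\sigma_3)|\xi_2|^2 + 2\sigma_3\,\xi_1\!\cdot\!\xi_2.
\]
Since $\widetilde{\kappa}=(\sigma_1+\sigma_2)(\sigma_2+\sigma_3)(\sigma_3+\sigma_1)\neq 0$, each factor $\sigma_i+\sigma_j$ is nonzero; in particular $|\sigma_2+\sigma_3|>0$. The cross term and the $|\xi_1|^2$ term satisfy
\[
\bigl|(\sigma_1+\sigma_3)|\xi_1|^2 + 2\sigma_3\,\xi_1\!\cdot\!\xi_2\bigr|
\;\le\; C\,|\xi_1|\,|\xi_2|,
\]
so choosing the implicit smallness in $|\xi_1|\ll|\xi_2|$ small enough (depending only on $\sigma_1,\sigma_2,\sigma_3$) gives
\[
|H| \;\ge\; |\sigma_2+\sigma_3|\,|\xi_2|^2 - C\,|\xi_1|\,|\xi_2| \;\ge\; \tfrac{1}{2}|\sigma_2+\sigma_3|\,|\xi_2|^2 \;\sim\; \max_{1\le j\le 3}|\xi_j|^2,
\]
which combined with the first display yields \eqref{modulation_est}.

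The only subtle point is ruling out the configuration in which two frequencies are small and comparable while one is large; this is handled by the triangle inequality applied to $\xi_1+\xi_2+\xi_3=0$ as above, so no real obstacle arises. The use of $\widetilde{\kappa}\neq 0$ is essential and enters only through the nonvanishing of $\sigma_2+\sigma_3$ (after the relabeling that places the small frequency in the first slot).
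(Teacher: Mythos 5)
Your proof is correct. The paper itself omits the proof of Lemma~\ref{modul_est}, deferring to Lemma~4.1 of \cite{Hi}, and your argument --- summing the three modulations to obtain the resonance function $H=\sum_j\sigma_j|\xi_j|^2$, expanding via $\xi_3=-\xi_1-\xi_2$, and using the nonvanishing of the factor $\sigma_2+\sigma_3$ of $\widetilde{\kappa}$ attached to the two comparable large frequencies to absorb the $O(|\xi_1||\xi_2|)$ error --- is exactly the standard argument used there, with the relabeling and the dependence of the implicit constants on $\sigma_1,\sigma_2,\sigma_3$ handled correctly.
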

Lemma \ref{modul_est_2} suggests that if  $\displaystyle \max_{1\leq j\leq 3}|\tau_{j}+\sigma_{j}|\xi_{j}|^{2}|
\ll \max_{1\leq j\leq 3}|\xi_{j}|^{2}$ then we can assume
\begin{equation}
\max_{1 \leq j\leq 3} |\tau_{j}+\sigma_{j}|\xi_{j}|^{2}| \ll  
\min_{1\leq j\leq 3} |\xi_j|^2.
\end{equation}
We first introduce the angular frequency localization operators which were utilized in \cite{BHHT09}.
\begin{defn}[\cite{BHHT09}]
We define the angular decomposition of $\R^2$ in frequency.
We define a partition of unity in $\R$,
\begin{equation*}
1 = \sum_{j \in \Z} \omega_j, \qquad \omega_j (s) = \psi(s-j) \left( \sum_{k \in \Z} \psi (s-k) \right)^{-1}. 
\end{equation*}
For a dyadic number $A \geq 64$, we also define a partition of unity on the unit circle,
\begin{equation*}
1 = \sum_{j =0}^{A-1} \omega_j^A, \qquad \omega_j^A (\vartheta) = 
\omega_j \left( \frac{A\vartheta}{\pi} \right) + \omega_{j-A} \left( \frac{A\vartheta}{\pi} \right).
\end{equation*}
We observe that $\omega_j^A$ is supported in 
\begin{equation*}
\Theta_j^A = \left[\frac{\pi}{A} \, (j-2), \ \frac{\pi}{A} \, (j+2) \right] 
\cup \left[-\pi + \frac{\pi}{A} \, (j-2), \ - \pi +\frac{\pi}{A} \, (j+2) \right].
\end{equation*}
We now define the angular frequency localization operators $R_j^A$,
\begin{equation*}
\F_x (R_j^A f)(\xi) = \omega_j^A(\vartheta) \F_x f(\xi), \qquad \textnormal{where} \ \xi = |\xi| 
(\cos \vartheta, \sin \vartheta).
\end{equation*}
For any function $u  : \, \R \, \times \, \R^2 \, \to \C$, $(t,x) \mapsto u(t,x)$ we set 
$(R_j^A u ) (t, x) = (R_j^Au( t, \cdot)) (x)$. This operator localizes function in frequency to the set
\begin{equation*}
{\mathfrak{D}}_j^A = \{ (\tau, |\xi| \cos \vartheta, |\xi| \sin \vartheta) \in \R \times \R^2 
\, | \, \vartheta \in \Theta_j^A  \} .
\end{equation*}
Immediately, we can see
\begin{equation*}
u = \sum_{j=0}^{A-1} R_j^A u.
\end{equation*}
\end{defn}
The next lemma will be used to obtain Proposition~\ref{key_be} 
for the case $\widetilde{\theta}=0$
\begin{lemm}\label{angle_prop}
Let $N$, $L_1$, $L_2$, $L_3$, $A\in 2^{\N_0}$. 
We assume that $\sigma_{1}$, $\sigma_{2}$, $\sigma_{3} \in \R \backslash \{0\}$ satisfy $\widetilde{\theta}=0$ and $(\tau_{1},\xi_{1})$, $(\tau_{2}, \xi_{2})$, $(\tau_{3}, \xi_{3})\in \R\times \R^{2}$ satisfy $\tau_{1}+\tau_{2}+\tau_{3}=0$, $\xi_{1}+\xi_{2}+\xi_{3}=0$, 
$|\xi_i|\sim N_i$, $|\tau_i+\sigma_i|\xi_i|^2|\sim L_i$, 
 and $(\tau_i, \xi_i)\in {\mathfrak{D}}_{j_i}^A$ $(i=1,2,3)$
for some $j_{1}$, $j_{2}$, $j_3\in \{0,1,\cdots ,A-1\}$. 
If $N_1\sim N_2\sim N_3$, 
$\displaystyle L_{\max}:=\max_{1\le i\le 3}L_i\leq N_{\max}^2 A^{-2}$, 
and $A\gg 1$ hold, 
then we have $\min\{|j_1-j_2|, |A-(j_{1}-j_{2})|\}\lesssim 1$, 
$\min\{|j_2-j_3|, |A-(j_{2}-j_{3})|\}\lesssim 1$, and 
$\min\{|j_1-j_3|, |A-(j_{1}-j_{3})|\}\lesssim 1$.
\end{lemm}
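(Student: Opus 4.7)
The plan is to exploit the mass-resonance condition $\widetilde\theta=0$ to factor the resonance function $M:=\sigma_1|\xi_1|^2+\sigma_2|\xi_2|^2+\sigma_3|\xi_3|^2$ as a perfect square in any two of the three frequencies. Since $\tau_1+\tau_2+\tau_3=0$, the hypothesis $L_{\max}\le N_{\max}^2A^{-2}$ gives $|M|=\bigl|\sum_{i=1}^{3}(\tau_i+\sigma_i|\xi_i|^2)\bigr|\lesssim N_{\max}^2A^{-2}$. Eliminating $\xi_1=-\xi_2-\xi_3$ and invoking the scalar identity $\sigma_1^2=(\sigma_1+\sigma_2)(\sigma_1+\sigma_3)$, which is equivalent to $\sigma_1\sigma_2+\sigma_2\sigma_3+\sigma_3\sigma_1=0$, i.e.\ $\widetilde\theta=0$, one verifies
\[
M=(\sigma_1+\sigma_2)\bigl|\xi_2+r\xi_3\bigr|^2,\qquad r:=\frac{\sigma_1}{\sigma_1+\sigma_2}.
\]
The hypothesis $\widetilde\kappa\neq 0$ ensures $\sigma_1+\sigma_2\neq 0$, so $r$ is a fixed nonzero real number, and combining the two estimates produces the key near-collinearity bound $|\xi_2+r\xi_3|\lesssim N_{\max}A^{-1}$.

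To convert this into an angular statement, write $\xi_k=|\xi_k|(\cos\vartheta_k,\sin\vartheta_k)$ and decompose
\[
|\xi_2+r\xi_3|^2=\bigl(|\xi_2|-|r||\xi_3|\bigr)^2+2|r||\xi_2||\xi_3|\bigl(1+\mathrm{sgn}(r)\cos(\vartheta_2-\vartheta_3)\bigr),
\]
in which both summands are nonnegative. The first summand is consistent with the hypothesis $|\xi_2|\sim|\xi_3|\sim N_{\max}$, while the second summand combined with $|\xi_2+r\xi_3|^2\lesssim N_{\max}^2A^{-2}$ forces $1+\mathrm{sgn}(r)\cos(\vartheta_2-\vartheta_3)\lesssim A^{-2}$. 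Hence $\vartheta_2-\vartheta_3$ lies within $O(A^{-1})$ of either $0$ or $\pi$, i.e.\ $\vartheta_2\equiv\vartheta_3\pmod{\pi}$ with error $O(A^{-1})$. Since each $\Theta_j^A$ is a union of two intervals of length $\sim A^{-1}$ centered at $\pi j/A$ modulo $\pi$, this forces $\min\{|j_2-j_3|,\,|A-(j_2-j_3)|\}\lesssim 1$.

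The remaining two inequalities follow by cyclic symmetry. Eliminating $\xi_2$ (respectively $\xi_3$) and invoking the analogous identity $\sigma_2^2=(\sigma_1+\sigma_2)(\sigma_2+\sigma_3)$ (resp.\ $\sigma_3^2=(\sigma_1+\sigma_3)(\sigma_2+\sigma_3)$), again equivalent to $\widetilde\theta=0$, produces
\[
M=(\sigma_2+\sigma_3)\bigl|\xi_3+r'\xi_1\bigr|^2=(\sigma_3+\sigma_1)\bigl|\xi_1+r''\xi_2\bigr|^2
\]
with $r'=\sigma_2/(\sigma_2+\sigma_3)$ and $r''=\sigma_3/(\sigma_3+\sigma_1)$, both fixed nonzero constants thanks to $\widetilde\kappa\neq 0$. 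Running the angular argument of the previous paragraph on each factorization delivers the desired bounds for the pairs $(j_3,j_1)$ and $(j_1,j_2)$. The only genuinely nontrivial step is the perfect-square factorization of $M$; this is the unique place the mass-resonance condition enters, and once it is in hand the passage to the angular-index estimate is purely geometric, using only the nondegeneracy $\widetilde\kappa\neq 0$ to keep the ratios $r,r',r''$ finite and nonzero.
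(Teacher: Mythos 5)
Your proof is correct and follows essentially the same route as the paper's: both exploit $\widetilde\theta=0$ to write the resonance function as a (signed) perfect square --- your vectorial form $(\sigma_1+\sigma_2)|\xi_2+r\xi_3|^2$ decomposed into radial and angular parts is, up to a cyclic permutation of indices, exactly the paper's identity $p\bigl(|\sigma_1+\sigma_3|^{1/2}|\xi_1|-|\sigma_2+\sigma_3|^{1/2}|\xi_2|\bigr)^2+2|\sigma_3||\xi_1||\xi_2|\bigl(1+pq\cos\angle(\xi_1,\xi_2)\bigr)$ --- and then both conclude $1\pm\cos\lesssim A^{-2}$ and hence the angular-index bound. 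The only cosmetic remark is that $\widetilde\kappa\neq 0$ is not a stated hypothesis of the lemma; the nonvanishing of $\sigma_i+\sigma_j$ that you need is instead an automatic consequence of $\widetilde\theta=0$, since then $(\sigma_i+\sigma_j)(\sigma_i+\sigma_k)=\sigma_i^2>0$.
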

\begin{proof}
Because $0=\widetilde{\theta}=\sigma_1\sigma_2\sigma_3(\frac{1}{\sigma_1}+\frac{1}{\sigma_2}+\frac{1}{\sigma_3})=\sigma_1\sigma_2+\sigma_2\sigma_3+\sigma_3\sigma_1$, 
we have
\[
(\sigma_1+\sigma_3)(\sigma_2+\sigma_3)
=\sigma_1\sigma_2+\sigma_2\sigma_3+\sigma_3\sigma_1+\sigma_3^2
=\sigma_3^2>0.
\]
We put $p:={\rm sgn}(\sigma_1+\sigma_3)={\rm sgn}(\sigma_2+\sigma_3)$, 
$q:={\rm sgn}(\sigma_3)$.
Let $\angle (\xi_{1},\xi_{2})\in [0,\pi]$ denotes 
the smaller angle between $\xi_{1}$ and $\xi_{2}$. 
Since
\[
\begin{split}
\frac{|\sigma_1+\sigma_3|^{\frac{1}{2}}|\sigma_2+\sigma_3|^{\frac{1}{2}}}{|\sigma_3|}
=\sqrt{1+\frac{\sigma_1\sigma_2\sigma_3}{\sigma_3^2}\left(\frac{1}{\sigma_1}+\frac{1}{\sigma_2}+\frac{1}{\sigma_3}\right)}=1, 
\end{split}
\]
we have
\[
\begin{split}
N_{\max}^2 A^{-2}&\geq L_{\max}\\
&\gtrsim |\sigma_1|\xi_1|^2+\sigma_2|\xi_2|^2+\sigma_3|\xi_1+\xi_2|^2|\\
&=|(\sigma_1+\sigma_3)|\xi_{1}|^2+(\sigma_2+\sigma_3)|\xi_{2}|^2
+2\sigma_3|\xi_{1}||\xi_{2}|\cos \angle (\xi_{1},\xi_{2})|\\
&=|p(|\sigma_1+\sigma_3|^{\frac{1}{2}}|\xi_{1}|-|\sigma_2+\sigma_3|^{\frac{1}{2}}
|\xi_{2}|)^2 \\
&\quad +2|\xi_{1}||\xi_{2}|
(p|\sigma_1+\sigma_3|^{\frac{1}{2}}|\sigma_2+\sigma_3|^{\frac{1}{2}}
+q|\sigma_3|\cos \angle (\xi_{1},\xi_{2}))|\\
&=|(|\sigma_1+\sigma_3|^{\frac{1}{2}}|\xi_{1}|-|\sigma_2+\sigma_3|^{\frac{1}{2}}
|\xi_{2}|)^2
+2|\sigma_3||\xi_{1}||\xi_{2}|
(1+pq\cos \angle (\xi_{1},\xi_{2}))|\\
&\ge 2|\sigma_3||\xi_{1}||\xi_{2}|
(1+pq\cos \angle (\xi_{1},\xi_{2}))
\end{split}
\]
Therefore we obtain
\[
\begin{split}
&1-\cos \angle (\xi_{1},\xi_{2})\lesssim A^{-2}\ \ {\rm if}\ (\sigma_1+\sigma_3)\sigma_3<0,\\
&1+\cos \angle (\xi_{1},\xi_{2})\lesssim  A^{-2} \ \ {\rm if}\ (\sigma_1+\sigma_3)\sigma_3>0.
\end{split}
\]
This implies
\[
\angle (\xi_{1},\xi_{2})\lesssim A^{-1}\ 
{\rm or}\ 
\pi -\angle (\xi_{1},\xi_{2})\lesssim A^{-1}.
\]
Therefore, 
we get $\min\{|j_1-j_2|, |A-(j_{1}-j_{2})|\}\lesssim 1$. 
By the same argument, we also get $\min\{|j_2-j_3|, |A-(j_{2}-j_{3})|\}\lesssim 1$ and 
$\min\{|j_1-j_3|, |A-(j_{1}-j_{3})|\}\lesssim 1$.
\end{proof}
Now we introduce the necessary bilinear estimates 
to obtain Proposition~\ref{key_be}  
for the case 
$\widetilde{\theta}<0$. 
\begin{thm}[Theorem\ 2.8\ in \cite{HK}]\label{thm-0.3}
We assume that $\sigma_{1}$, $\sigma_{2}$, $\sigma_{3} \in \R \backslash \{0\}$ satisfy $\widetilde{\kappa}\ne 0$ 
and $\widetilde{\theta}<0$.
Let $\displaystyle L_{\max} := \max_{1\leq j\leq 3} (L_1, L_2, L_3) \ll |\widetilde{\theta}| N_{\min}^2$, 
$A \geq 64$, and $|j_1 - j_2| \lesssim 1$. 
Then the following estimates holds:
\begin{align}
\begin{split}
\|Q_{L_3}^{-\sigma_3} P_{N_3}(R_{j_1}^A Q_{L_1}^{\sigma_1}P_{N_1}u_{1}\cdot 
R_{j_2}^A Q_{L_2}^{\sigma_2}P_{N_2}u_{2})\|_{L^{2}_{tx}} & \\
\lesssim A^{-\frac{1}{2}} L_1^{\frac{1}{2}}L_2^{\frac{1}{2}} 
\|R_{j_1}^A Q_{L_1}^{\sigma_1}P_{N_1}u_{1}\|_{L^2_{tx}} &  \|R_{j_2}^A Q_{L_2}^{\sigma_2}P_{N_2}u_{2}\|_{L^2_{tx}},
\end{split}\label{bilinear-12}\\
\begin{split}
\|R_{j_1}^A Q_{L_1}^{-\sigma_1} P_{N_1}(R_{j_2}^A Q_{L_2}^{\sigma_2}P_{N_2}u_{2}\cdot 
 Q_{L_3}^{\sigma_3}P_{N_3}u_{3})\|_{L^{2}_{tx}} & \\
\lesssim A^{-\frac{1}{2}} L_2^{\frac{1}{2}}L_3^{\frac{1}{2}} 
\|R_{j_2}^A Q_{L_2}^{\sigma_2}P_{N_2}u_{2}\|_{L^2_{tx}} &  \|Q_{L_3}^{\sigma_3}P_{N_3}u_{3}\|_{L^2_{tx}},
\end{split}\label{bilinear-23}\\
\begin{split}
\|R_{j_2}^A Q_{L_2}^{-\sigma_2} P_{N_2}( Q_{L_3}^{\sigma_3}P_{N_3}u_{3}\cdot 
 R_{j_1}^A Q_{L_1}^{\sigma_1}P_{N_1}u_{1})\|_{L^{2}_{tx}} & \\
\lesssim A^{-\frac{1}{2}} L_3^{\frac{1}{2}}L_1^{\frac{1}{2}} \|Q_{L_3}^{\sigma_3}P_{N_3}u_{3}\|_{L^2_{tx}} & 
\|R_{j_1}^A Q_{L_1}^{\sigma_1}P_{N_1}u_{1}\|_{L^2_{tx}}.
\end{split}\label{bilinear-31}
\end{align}
\end{thm}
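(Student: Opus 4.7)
The plan is to follow the Loomis--Whitney type approach developed for the Zakharov and related systems in \cite{BHHT09}, \cite{BHT10}, \cite{BKW}, adapted to the paraboloids $\{\tau+\sigma_j|\xi|^2=0\}_{j=1,2,3}$. Under $\widetilde{\theta}<0$ and $\widetilde{\kappa}\neq 0$ these three surfaces are pairwise transverse in a quantitative sense, and the low-modulation hypothesis $L_{\max}\ll|\widetilde{\theta}|N_{\min}^2$ forces the interacting frequencies to concentrate in a thin shell near their common intersection. This permits a change of variables with a controlled Jacobian, which when combined with the angular localization of the $R_{j_i}^A$ operators produces the gain $A^{-1/2}$.

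For \eqref{bilinear-12}, I would first reduce by duality and Plancherel to the convolution bound
\[
\Bigl|\int f_1(\tau_1,\xi_1)\, f_2(\tau_2,\xi_2)\, f_3(\tau_1+\tau_2,\xi_1+\xi_2)\, d\tau_1 d\tau_2 d\xi_1 d\xi_2\Bigr| \lesssim A^{-\frac12}L_1^{\frac12}L_2^{\frac12}\|f_1\|_2\|f_2\|_2\|f_3\|_2,
\]
where the non-negative $f_j$ are supported in thickened angular-paraboloid caps $\{|\tau+\sigma_j|\xi|^2|\lesssim L_j,\ |\xi|\sim N_j\}$ intersected with the angular sectors. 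After Cauchy--Schwarz on the $(\tau_1,\xi_1)$ integration and freezing $(\tau_3,\xi_3):=(\tau_1+\tau_2,\xi_1+\xi_2)$, it suffices to bound the measure of the admissible set of $(\tau_1,\xi_1)\in\R^3$. I would change variables to $(\mu_1,\mu_2,\eta)$ with $\mu_1:=\tau_1+\sigma_1|\xi_1|^2$, $\mu_2:=\tau_3-\tau_1+\sigma_2|\xi_3-\xi_1|^2$, and $\eta$ a one-dimensional transverse spatial coordinate. The $\tau_1$-Jacobian is trivial, and the spatial Jacobian equals the component of $\nabla_{\xi_1}(\sigma_1|\xi_1|^2-\sigma_2|\xi_3-\xi_1|^2)=2(\sigma_1\xi_1+\sigma_2\xi_2)$ perpendicular to $\eta$. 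Choosing $\eta$ to be the common (nearly shared under $|j_1-j_2|\lesssim 1$) angular direction of $\xi_1$ and $\xi_2$, the Jacobian becomes the radial component of $\sigma_1\xi_1+\sigma_2\xi_2$, which by the transversality encoded in $\widetilde{\kappa}\neq 0$ and $\widetilde{\theta}<0$ is bounded below by a multiple of $N_{\max}$.

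The gain $A^{-1/2}$ then arises because, under the $R_{j_1}^A$ localization, the transverse variable $\eta$ ranges only over an arc of angular length $\sim A^{-1}$, contributing a factor $N_1 A^{-1}$ to the measure; the square root from Cauchy--Schwarz yields $A^{-1/2}$ after canceling $N_1/N_{\max}\sim 1$ against the Jacobian, while the $\mu_j$ coordinates each contribute $L_j$. This produces \eqref{bilinear-12}, and \eqref{bilinear-23}, \eqref{bilinear-31} follow by repeating the computation after applying duality to a different factor of the trilinear form, using the symmetry of the constraints $\xi_1+\xi_2+\xi_3=0$ and $\tau_1+\tau_2+\tau_3=0$. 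The main obstacle is checking that the Jacobian lower bound depends only on $\widetilde{\kappa}\neq 0$ and $\widetilde{\theta}<0$ rather than on a stronger non-degeneracy hypothesis: one must verify that when $|j_1-j_2|\lesssim 1$ the vector $\sigma_1\xi_1+\sigma_2\xi_2$ is indeed essentially radial up to an angular error that can be absorbed into the coordinate $\eta$. This geometric verification is the substantive content of Theorem~2.8 in \cite{HK}, which we would invoke directly.
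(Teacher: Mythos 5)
Your overall architecture --- duality and Plancherel, Cauchy--Schwarz in $(\tau_1,\xi_1)$, and a measure bound on the admissible set that gains $A^{-1}$ from the angular localization and $L_{\max}/N_1$ from radial transversality --- is the same as the proof of Theorem~2.8 in \cite{HK}. But the crux is exactly the step you do not carry out: you end by saying the Jacobian lower bound ``is the substantive content of Theorem~2.8 in \cite{HK}, which we would invoke directly.'' That theorem is the statement to be proved, so the argument is circular as written. The missing verification is a short but essential computation: with $\xi_3=\xi_1+\xi_2$ and $\sigma_1+\sigma_2\neq 0$ (which follows from $\widetilde{\kappa}\neq 0$), completing the square in $|\xi_1|$ gives
\[
\sigma_1|\xi_1|^2+\sigma_2|\xi_3-\xi_1|^2+\sigma_3|\xi_3|^2
=\frac{\bigl((\sigma_1+\sigma_2)|\xi_1|-\sigma_2|\xi_3|\cos\angle(\xi_3,\xi_1)\bigr)^2-\bigl(|\widetilde{\theta}|-\sigma_2^2\sin^2\angle(\xi_3,\xi_1)\bigr)|\xi_3|^2}{\sigma_1+\sigma_2}.
\]
On the support of the left-hand side of \eqref{bilinear-12} the resonance function is $O(L_{\max})=o(|\widetilde{\theta}|N_1^2)$ and $\sin^2\angle(\xi_3,\xi_1)\lesssim A^{-2}$, so $|\widetilde{\theta}|-\sigma_2^2\sin^2\angle\geq|\widetilde{\theta}|/2$ --- this is precisely where $\widetilde{\theta}<0$ enters --- and hence $|(\sigma_1+\sigma_2)|\xi_1|-\sigma_2|\xi_3|\cos\angle|\sim\sqrt{|\widetilde{\theta}|}\,N_1$ there; for each fixed angle, $|\xi_1|$ is therefore confined to a set of measure $\lesssim L_{\max}/(\sqrt{|\widetilde{\theta}|}N_1)$. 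Note that this lower bound holds only on the near-resonance set, not pointwise in the angular sector, so it cannot be obtained from the angular alignment alone.

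There is also a sign error that is not cosmetic. With your convention $\xi_3=\xi_1+\xi_2$, the Jacobian of $(\tau_1,\xi_1)\mapsto(\mu_1,\mu_2,\eta)$ is the component of $2(\sigma_1\xi_1-\sigma_2\xi_2)$ orthogonal to the tangential direction, whose radial part is exactly the completed-square quantity above --- not $2(\sigma_1\xi_1+\sigma_2\xi_2)$. The radial component of $\sigma_1\xi_1+\sigma_2\xi_2$ can vanish at an exactly resonant, perfectly aligned configuration: take $\sigma_1=1$, $\sigma_2=-2$, $\sigma_3=-2/9$ (then $\widetilde{\theta}=-16/9<0$ and $\widetilde{\kappa}\neq 0$) and $\xi_1=(N,0)$, $\xi_2=(N/2,0)$, $\xi_3=(3N/2,0)$; the resonance function vanishes while $\sigma_1\xi_1+\sigma_2\xi_2=0$. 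So the transversality claim as you state it is false, and the proof does not close without replacing it by the identity above. A minor additional remark: the Loomis--Whitney machinery of \cite{BHT10} that you invoke at the outset is not needed for \eqref{bilinear-12}--\eqref{bilinear-31}; it is what underlies Proposition~\ref{thm2.6}, whereas the present estimates follow from the elementary measure count you otherwise describe (together with the reduction of \eqref{bilinear-23} and \eqref{bilinear-31} to \eqref{bilinear-12} by inserting a free angular projection on $u_3$, which is legitimate because $N_1\sim N_2\sim N_3$ in the low-modulation regime).
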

\begin{prop}[Proposition\ 2.9\ in \cite{HK}]\label{thm2.6}
We assume that $\sigma_{1}$, $\sigma_{2}$, $\sigma_{3} \in \R \backslash \{0\}$ satisfy $\widetilde{\kappa}\ne 0$ 
and $\widetilde{\theta}<0$.
Let $L_{\textnormal{max}} \ll |\widetilde{\theta}| 
N_{\min}^2$ 
and $ 64 \leq A \leq N_{\textnormal{max}}$, \ $16 \leq |j_1 - j_2 |\leq 32$. 
Then the following estimate holds:
\begin{equation}
\begin{split}
\|Q_{L_3}^{-\sigma_3} P_{N_3}(R_{j_1}^A Q_{L_1}^{\sigma_1}P_{N_1}u_{1}\cdot 
R_{j_2}^A Q_{L_2}^{\sigma_2}P_{N_2}u_{2})\|_{L^{2}_{tx}} & \\
\lesssim A^{\frac{1}{2}} N_1^{-1} L_1^{\frac{1}{2}}L_2^{\frac{1}{2}} L_3^{\frac{1}{2}} 
\|R_{j_1}^A Q_{L_1}^{\sigma_1}P_{N_1}u_{1}\|_{L^2_{tx}} &  \|R_{j_2}^A Q_{L_2}^{\sigma_2}P_{N_2}u_{2}\|_{L^2_{tx}}.
\end{split}\label{0609}
\end{equation}
\end{prop}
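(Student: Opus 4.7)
The strategy mirrors that of Theorem~\ref{thm-0.3}: dualize, apply Plancherel, and bound the measure of a resonance set via Cauchy--Schwarz. The essential new feature is that the two input angular sectors are now \emph{separated} ($16$ to $32$ bins apart), so we exploit transversality rather than alignment, which is precisely what turns the previous factor $A^{-1/2}$ into $A^{1/2}$.

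Concretely, by dualizing against $u_3\in Q_{L_3}^{-\sigma_3}P_{N_3}L^2_{tx}$, applying Plancherel, and using Cauchy--Schwarz in $(\tau_1,\xi_1)$ as in the proof of Theorem~\ref{thm-0.3}, the desired estimate reduces to
\[
\sup_{(\tau_3,\xi_3)\in\supp \psi_{N_3,L_3}^{-\sigma_3}} |E(\tau_3,\xi_3)|\lesssim \frac{A L_1 L_2 L_3}{N_1^{2}},
\]
where $E(\tau_3,\xi_3)$ is the set of $(\tau_1,\xi_1)$ meeting the input modulation, angular, and frequency support constraints. The $\tau_1$ integration uses only the two input modulation conditions and contributes a factor $\min(L_1,L_2)$. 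Combining all the modulation constraints together with the output shell condition $\tau_3=\sigma_3|\xi_3|^2+O(L_3)$ yields the resonance identity
\[
\sigma_3|\xi_3|^2-\sigma_1|\xi_1|^2-\sigma_2|\xi_3+\xi_1|^2=O(L_{\max}),
\]
and completing the square exactly as in the proof of Lemma~\ref{angle_prop}, together with $\widetilde{\theta}<0$, confines $\xi_1$ to an annulus centered at some point $c\xi_3$ of radius $\sim \sqrt{|\widetilde{\theta}|}\,N_1$ and radial width $\sim L_{\max}/N_1$; the hypothesis $L_{\max}\ll |\widetilde{\theta}|N_{\min}^{2}$ makes this annulus genuinely thin on the scale of $N_1$. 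Intersecting this annulus with the two angular cones $\Theta_{j_1}^A$ and $-\xi_3-\Theta_{j_2}^A$, transverse by virtue of $16\leq |j_1-j_2|\leq 32$, and then combining with the $\min(L_1,L_2)$ factor from the $\tau_1$ integration, gives the required bound on $|E|$.

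The main obstacle is the sharp geometric counting in the last step. The angular cones alone would permit $\xi_1$ to range in a parallelogram of area $\sim N_1^2/A$, where the factor $A^{-1}$ (rather than $A^{-2}$) comes from the Jacobian $|\sin(\theta_{j_1}-\theta_{j_2})|\sim A^{-1}$ associated with the transversality of $\Theta_{j_1}^A$ and $\Theta_{j_2}^A$; the intersection with the thin annulus must then be tracked carefully to extract precisely the right combination of $L_1,L_2,L_3$ powers, which in particular requires identifying the ``worst'' width in the annular direction with a factor of $L_3$ rather than the weaker $L_{\max}$. This delicate bookkeeping, which uses both the angular separation hypothesis $|j_1-j_2|\geq 16$ (to guarantee transversality of the two cones) and the upper bound $A\leq N_{\max}$ (to keep the sectors wide enough for the generic transverse geometry to apply), is what distinguishes Proposition~\ref{thm2.6} from its aligned counterpart Theorem~\ref{thm-0.3} and constitutes the crux of the proof.
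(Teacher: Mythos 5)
There is a genuine gap: the bilinear Cauchy--Schwarz reduction you propose cannot produce the trilinear gain $L_1^{1/2}L_2^{1/2}L_3^{1/2}$, and the measure bound you would need, namely $\sup_{(\tau_3,\xi_3)}|E(\tau_3,\xi_3)|\lesssim A L_1L_2L_3 N_1^{-2}$, is false. The set $E$ lives in the three-dimensional $(\tau_1,\xi_1)$-space; the $\tau_1$-fiber has measure at most $\min(L_1,L_2)$, and for fixed angle the resonance identity confines $|\xi_1|$ only to a set of width $\sim L_{\max}/N_1$ (the three modulation constraints enter the resonance function only through their maximum, so you cannot ``identify the worst width with a factor of $L_3$'' and keep the other two factors as well). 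Together with the angular measure $\sim A^{-1}N_1$ this gives $|E|\lesssim A^{-1}\min(L_1,L_2)L_{\max}$ at best, which reproduces the \emph{aligned} estimate of Theorem~\ref{thm-0.3}, not \eqref{0609}. For a concrete counterexample to your claimed bound, take $L_1=L_2=L_3=1$ and a generic admissible configuration: then $|E|\sim A^{-1}$, whereas $AL_1L_2L_3N_1^{-2}=AN_1^{-2}\ll A^{-1}$ for $A\ll N_1$. No pointwise bound on $|E|$ can close the argument; the gain must come from a genuinely trilinear mechanism.

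The paper's proof proceeds differently. It first disposes of the regime $L_{\max}\geq A^{-1}N_1^2$ by invoking the aligned bounds \eqref{bilinear-12}--\eqref{bilinear-31} of Theorem~\ref{thm-0.3} (there $A^{-1/2}\leq N_1^{-1}L_{\max}^{1/2}$ already yields \eqref{0609}, via duality when $L_1$ or $L_2$ is maximal). In the remaining regime $L_{\max}\leq A^{-1}N_1^2$ it decomposes the inputs into circular shells of thickness $\delta=A^{-1}N_1$ (the resonance identity pairs each shell of $\supp f_1$ with essentially one shell of $\supp f_2$), rescales, and applies the nonlinear Loomis--Whitney convolution estimate of Bejenaru--Herr--Tataru to the three resulting paraboloid pieces; the angular separation $16\leq|j_1-j_2|\leq 32$ is used precisely to verify the transversality condition $|\det N(\lambda_1,\lambda_2,\lambda_3)|\gtrsim|\widetilde{\theta}|A^{-1}$, and the convolution estimate then delivers the factor $d^{-1/2}\sim A^{1/2}$ together with all three modulation gains. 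Your heuristic correctly identifies the thin annulus and the transversality of the two cones, but the missing idea is this trilinear surface-convolution estimate (or an equivalent substitute); without it the argument stalls at the bilinear level.
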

\subsection{Proof of Proposition~\ref{key_be}}
By the duality argument, we have
\[
\begin{split}
\||\nabla |(uv)\|_{X^{s,-b'}_{-\sigma_3}}
&\lesssim 
\sup_{\|w\|_{X^{-s,b'}_{\sigma_3}}=1}
\left|\int |\nabla|(uv)wdxdt\right|,\\ 
\|(\Delta U)v\|_{X^{s,-b'}_{-\sigma_3}}
&\lesssim 
\sup_{\|w\|_{X^{-s,b'}_{\sigma_3}}=1}
\left|\int (\Delta U)vwdxdt\right|\\
&\le \sup_{\|w\|_{X^{-s,b'}_{\sigma_3}}=1}
\left(
\left|\int \partial_1(\partial_1U)vwdxdt\right|
+\left|\int \partial_2(\partial_2U)vwdxdt\right|
\right), 
\end{split}
\]
where we used $(Q_{L_3}^{-\sigma_3}f,\overline{g})_{L^2_{tx}}=(f,\overline{Q_{L_3}^{\sigma_3}g})_{L^2_{tx}}$. 
Since $|\nabla |(uv)$ and $(\Delta U)v$ are radial with respect to $x$, 
we can assume $w$ is also radial with respect to $x$. 
Therefore, to obtain (\ref{be_111}), it suffices to show that
\begin{equation}\label{tri_est_000}
\begin{split}
&\sum_{N_1,N_2,N_3\ge 1}\sum_{L_1,L_2, L_3 \ge 1} N_{\max}
\left|\int u_{N_1,L_1}v_{N_2,L_2}w_{N_3,L_3}dxdt\right|\\
&\lesssim 
\|u\|_{X^{s,b'}_{\sigma_1}} \|v\|_{X^{s,b'}_{\sigma_2}} \|w\|_{X^{-s, b'}_{\sigma_3}}
\end{split}
\end{equation}
for the radial functions $u$, $v$, and $w$, where we put
\[
u_{N_1,L_1}:=Q_{L_1}^{\sigma_1}P_{N_1}u,\ 
v_{N_2,L_2}:=Q_{L_2}^{\sigma_2}P_{N_2}v,\ 
w_{N_3,L_3}:=Q_{L_3}^{\sigma_3}P_{N_3}w
\]
and used $(Q_{L_3}^{-\sigma_3}f,\overline{g})_{L^2_{tx}}=(f,\overline{Q_{L_3}^{\sigma_3}g})_{L^2_{tx}}$. 
By Plancherel's theorem, we have
\[
\begin{split}
&\left|\int u_{N_1,L_1}v_{N_2,L_2}w_{N_3,L_3}dxdt\right|\\
&\sim 
\left|\int_{\substack{\xi_1+\xi_2+\xi_3=0\\ \tau_1+\tau_2+\tau_3=0}}
\F_{tx}[u_{N_1,L_1}](\tau_1,\xi_1)\F_{tx}[v_{N_2,L_2}](\tau_2,\xi_2)\F_{tx}[w_{N_3,L_3}](\tau_3,\xi_3)\right|.
\end{split}
\]
We only consider the case $N_1 \lesssim N_2 \sim N_3$, 
because the remaining cases $N_2 \lesssim N_3 \sim N_1$ and 
$N_3 \lesssim N_1 \sim N_2$ can be shown similarly. 
It suffices to show that
\begin{equation}\label{desired_est}
\begin{split}
&N_{2}
\left|\int u_{N_1,L_1}v_{N_2,L_2}w_{N_3,L_3}dxdt\right|\\
&\lesssim
\left( \frac{N_{1}}{N_{2}} \right)^{\epsilon}N_{1}^s(L_1L_2L_3)^{c}
\|u_{N_1,L_1}\|_{L^2_{tx}}\|v_{N_2,L_2}\|_{L^2_{tx}}\|w_{N_3,L_3}\|_{L^2_{tx}}
\end{split}
\end{equation}
for some $b'\in (0,\frac{1}{2})$, $c\in (0,b')$, and $\epsilon >0$. 
Indeed, from (\ref{desired_est}) and the Cauchy-Schwarz inequality, 
we obtain 
\[
\begin{split}
&
\sum_{N_1 \lesssim N_2 \sim N_3}\sum_{L_1,L_2, L_3 \ge 1} N_{2}
\left|\int u_{N_1,L_1}v_{N_2,L_2}w_{N_3,L_3}dxdt\right|\\
&\lesssim 
 \sum_{N_1 \lesssim N_2 \sim N_3}\sum_{L_1,L_2, L_3\ge 1}
\left( \frac{N_{1}}{N_{2}} \right)^{\epsilon}N_{1}^s(L_1L_2L_3)^{c}
\|u_{N_1,L_1}\|_{L^2_{tx}}\|v_{N_2,L_2}\|_{L^2_{tx}}\|w_{N_3,L_3}\|_{L^2_{tx}}\\
&\lesssim  \sum_{N_3}\sum_{ N_2 \sim N_3}
\left( \sum_{N_1\lesssim N_2}N_1^{s+\e} N_2^{-\e} \sum_{L_1 \ge 1}L_1^{c}\|u_{N_1,L_1}\|_{L^2_{tx}}\right) \\
& \quad \times \sum_{L_2\ge 1}L_2^{c}\|v_{N_2,L_2}\|_{L^2_{tx}}\sum_{L_3\ge 1}L_3^{c}\|w_{N_3,L_3}\|_{L^2_{tx}} \\
&\lesssim \|u\|_{X^{s,b'}_{\sigma_1}} 
\sum_{N_3}\sum_{ N_2 \sim N_3} \left( N_2^{2s} \sum_{L_2\ge 1}L_2^{2b'}\|v_{N_2,L_2}\|_{L^2_{tx}}^2 \right)^{\frac{1}{2}}
\left( N_3^{-2s}  \sum_{L_3\ge 1}L_3^{2b'}\|w_{N_3,L_3}\|_{L^2_{tx}}^2 \right)^{\frac{1}{2}}\\
& \lesssim \|u\|_{X^{s,b'}_{\sigma_1}} \|v\|_{X^{s,b'}_{\sigma_2}} \|w\|_{X^{-s, b'}_{\sigma_3}}
\end{split}
\]
We put 
$\displaystyle L_{\max} := \max_{1\leq j\leq 3} (L_1, L_2, L_3)$.\\
\kuuhaku \\
\underline{Case\ 1:\ High modulation, $\displaystyle L_{\max}\gtrsim N_{\max}^2$}

In this case, the radial condition is not needed. 
We assume $L_1\gtrsim N_{\max}^2\sim N_2^2$. 
By the Cauchy-Schwarz inequality and (\ref{L2be_est_2}), 
we have
\[
\begin{split}
&\left|\int u_{N_1,L_1}v_{N_2,L_2}w_{N_3,L_3}dxdt\right|\\
&\lesssim \|u_{N_1,L_1}\|_{L^2_{tx}}\|P_{N_1}(v_{N_2,L_2}w_{N_3,L_3})\|_{L^2_{tx}}\\
&\lesssim N_1^{4\delta}
\left(\frac{N_1}{N_2}\right)^{\frac{1}{2}- 2\delta}L_2^{c}L_3^{c}
\|u_{N_1,L_1}\|_{L^2_{tx}}\|v_{N_2,L_2}\|_{L^2_{tx}}\|w_{N_3,L_3}\|_{L^2_{tx}},
\end{split}
\]
where $\delta := \frac{1}{2}-c$.
Therefore, we obtain
\[
\begin{split}
&N_{2}
\left|\int u_{N_1,L_1}v_{N_2,L_2}w_{N_3,L_3}dxdt\right|\\
&\lesssim N_1^{\frac{1}{2}+2\delta} N_2^{\frac{1}{2}-2c+2\delta}
(L_1L_2L_3)^{c}
\|u_{N_1,L_1}\|_{L^2_{tx}}\|v_{N_2,L_2}\|_{L^2_{tx}}\|w_{N_3,L_3}\|_{L^2_{tx}}.
\end{split}
\]
Thus, it suffices to show that
\begin{equation}\label{wait_esti}
N_1^{\frac{1}{2}+2\delta} N_2^{\frac{1}{2}-2c+2\delta}
\lesssim \left( \frac{N_{1}}{N_{2}} \right)^{\epsilon}N_{1}^s.
\end{equation}
Since $\delta =\frac{1}{2}-c$, we have
\[
\begin{split}
N_1^{\frac{1}{2}+2\delta} N_2^{\frac{1}{2}-2c+2\delta}
&=N_1^{\frac{3}{2}- 2c} N_2^{\frac{3}{2}- 4c} \\
&\sim N_1^{3- 6c-s}
\left( \frac{N_{1}}{N_{2}} \right)^{4c- \frac{3}{2} }N_{1}^s.
\end{split}
\]
Therefore, by choosing $b'$ and $c$ as
$\max\{\frac{3-s}{6},\frac{3}{8}\}<c<b'<\frac{1}{2}$
for $s>0$, 
we get (\ref{wait_esti}). \\
\kuuhaku \\
\underline{Case\ 2:\ Low modulation, $\displaystyle L_{\max}\ll N_{\max}^2$}

By Lemma \ref{modul_est_2}, we can assume $N_1 \sim N_2 \sim N_3$ thanks to $\displaystyle L_{\max}\ll N_{\max}^2$. 
We assume $L_{\textnormal{max}} = L_3$ for 
simplicity. The other cases can be treated similarly. \\

\noindent\textbf{$\circ$ The case $\widetilde{\theta}=0$}\\
Let $A := L_{\max}^{-\frac{1}{2}} N_{\max}\sim L_{3}^{-\frac{1}{2}} N_{1}$. We decompose $\R^3 \cross \R^3\cross \R^3$ as follows:
\begin{equation*}
\R^3 \cross \R^3\cross \R^3 = \bigcup_{0 \leq j_1,j_2,j_3 \leq A -1} 
{\mathfrak{D}}_{j_1}^A \cross {\mathfrak{D}}_{j_2}^A\cross {\mathfrak{D}}_{j_3}^A.
\end{equation*}
Since $L_{\textnormal{max}} \leq N_{\max}^2 (L_{\max}^{-\frac{1}{2}} N_{\max})^{-2} = N_{\max}^2 A^{-2}$, by Lemma~\ref{angle_prop}, we can write
\begin{align*}
 &\left|\int u_{N_1,L_1}v_{N_2,L_2}w_{N_3,L_3}dxdt\right|\\
&\leq \sum_{j_1=0}^{A-1}\sum_{j_2\in J(j_1)} 
\sum_{j_3\in J(j_1)} 
\left|\int u_{N_1,L_1, j_1}v_{N_2,L_2, j_2}w_{N_3,L_3,j_3}dxdt\right| 
\end{align*}
with $u_{N_1,L_1, j_1} := R_{j_1}^A u_{N_1, L_1}$, 
$v_{N_2,L_2, j_2} := R_{j_2}^A v_{N_2, L_2}$ and
$w_{N_3,L_3, j_3} := R_{j_3}^A v_{N_3, L_3}$, where
\[
J(j_1):=\{j\in \{0,1,\cdots ,A-1\}|\min\{|j_1-j|, |A-(j_1-j)|\}\lesssim 1\}. 
\]
We note that $\# J(j_1)\lesssim 1$. 
By using the H\"older inequality and Corollary~\ref{Bo_Stri} 
with $p=q=4$, we get
\begin{align*}
& \sum_{j_1=0}^{A-1}\sum_{j_2\in J(j_1)} 
\sum_{j_3\in J(j_1)} 
\left|\int u_{N_1,L_1, j_1}v_{N_2,L_2, j_2}w_{N_3,L_3,j_3}dxdt\right|  \\
& \lesssim  \sum_{j_1=0}^{A-1}\sum_{j_2\in J(j_1)} 
\sum_{j_3\in J(j_1)} 
\|u_{N_1,L_1,j_1}\|_{L^4_{tx}}\|v_{N_2,L_2,j_2}\|_{L^4_{tx}}\|w_{N_3,L_3,j_3} \|_{{L^2_{t x}}}  
\\
& \lesssim 
AL_1^{\frac{1}{2}}L_2^{\frac{1}{2}}
\sup_{j_1}\|u_{N_1,L_1,j_1}\|_{L^2_{tx}}
\sup_{j_2}\|v_{N_2,L_2,j_2}\|_{L^2_{tx}}
\sup_{j_3}\|w_{N_3,L_3,j_3}\|_{L^2_{tx}}.
\end{align*}
Since $u$, $v$, and $w$ are radial respect to $x$, we have
\begin{equation}\label{rad_l2est}
\begin{split}
&\|u_{N_1,L_1,j_1}\|_{L^2_{tx}}\lesssim A^{-\frac{1}{2}}\|u_{N_1,L_1}\|_{L^2_{tx}},\ 
\|v_{N_2,L_2,j_2}\|_{L^2_{tx}}\lesssim A^{-\frac{1}{2}}\|v_{N_2,L_2}\|_{L^2_{tx}},\\ 
&\|w_{N_3,L_3,j_3}\|_{L^2_{tx}}\lesssim A^{-\frac{1}{2}}\|w_{N_3,L_3}\|_{L^2_{tx}}. 
\end{split}
\end{equation}
Therefore, we obtain 
\[
\begin{split}
&N_2\left|\int u_{N_1,L_1}v_{N_2,L_2}w_{N_3,L_3}dxdt\right|\\
&\lesssim 
N_2A^{-\frac{1}{2}}L_1^{\frac{1}{2}}L_2^{\frac{1}{2}}
\|u_{N_1,L_1}\|_{L^2_{tx}}
\|v_{N_2,L_2}\|_{L^2_{tx}}
\|w_{N_3,L_3}\|_{L^2_{tx}}\\
&\sim N_1^{\frac{1}{2}}L_1^{\frac{1}{2}}L_2^{\frac{1}{2}}L_3^{\frac{1}{4}}
\|u_{N_1,L_1}\|_{L^2_{tx}}
\|v_{N_2,L_2}\|_{L^2_{tx}}
\|w_{N_3,L_3}\|_{L^2_{tx}}\\
&\lesssim N_1^{\frac{1}{2}}(L_1L_2L_3)^{\frac{5}{12}}
\|u_{N_1,L_1}\|_{L^2_{tx}}
\|v_{N_2,L_2}\|_{L^2_{tx}}
\|w_{N_3,L_3}\|_{L^2_{tx}}
\end{split}
\]
This estimate gives the desired estimate \eqref{desired_est} for $s\ge \frac{1}{2}$ 
by choosing $b'$ and $c$ as $\frac{5}{12}\le c<b'<\frac{1}{2}$. \\

\noindent\textbf{$\circ$ The case $\widetilde{\theta}<0$}\\
We decompose $\R^3 \cross \R^3$ as follows:
\begin{equation*}
\R^3 \cross \R^3 =  \left(\bigcup_{\tiny{\substack{0 \leq j_1,j_2 \leq N_1 -1\\|j_1 - j_2|\leq 16}}} 
{\mathfrak{D}}_{j_1}^{N_1} \cross {\mathfrak{D}}_{j_2}^{N_1}\right)
 \cup 
\left(\bigcup_{64 \leq A \leq N_1} \ \bigcup_{\tiny{\substack{0 \leq j_1,j_2 \leq A -1\\ 16 \leq |j_1 - j_2|\leq 32}}} 
{\mathfrak{D}}_{j_1}^A \cross {\mathfrak{D}}_{j_2}^A\right).
\end{equation*}
We can write
\begin{align*}
 \left|\int u_{N_1,L_1}v_{N_2,L_2}w_{N_3,L_3}dxdt\right| 
 \leq \sum_{{\tiny{\substack{A=N_1\\0 \leq j_1,j_2 \leq N_1 -1\\|j_1 - j_2|\leq 16}}}} \sum_{j_3\in J(j_1)} 
\left|\int u_{N_1,L_1, j_1}v_{N_2,L_2, j_2}w_{N_3,L_3,j_3}dxdt\right| & \\ + 
\sum_{64 \leq A \leq N_1}  \sum_{{\tiny{\substack{0 \leq j_1,j_2 \leq A-1\\16\le |j_1 - j_2|\leq 32}}}} 
\sum_{j_3\in J(j_1)} 
\left|\int u_{N_1,L_1, j_1}v_{N_2,L_2, j_2}w_{N_3,L_3,j_3}dxdt\right|&. 
\end{align*}
For the former term, by using the H\"older inequality, Theorem \ref{thm-0.3}, 
and (\ref{rad_l2est}), we get
\begin{align*}
& \sum_{{\tiny{\substack{A=N_1\\0 \leq j_1,j_2 \leq N_1 -1\\|j_1 - j_2|\leq 16}}}} \sum_{j_3\in J(j_1)} 
\left|\int u_{N_1,L_1, j_1}v_{N_2,L_2, j_2}w_{N_3,L_3,j_3}dxdt\right| \\
& \lesssim  
\sum_{{\tiny{\substack{A=N_1\\ 0 \leq j_1,j_2 \leq N_1 -1\\|j_1 - j_2|\leq 16}}}}
\|Q_{L_3}^{-\sigma_3} P_{N_3} ( u_{N_1,L_1, j_1} v_{N_2,L_2, j_2})\|_{L^{2}_{tx}}
\sum_{j_3\in J(j_1)} \|w_{N_3,L_3, j_3} \|_{{L^2_{t x}}}  \\
& \lesssim 
N_1^{-1} L_1^{\frac{1}{2}}L_2^{\frac{1}{2}} \|w_{N_3,L_3} \|_{{L^2_{t x}}}
\sum_{{\tiny{\substack{A=N_1\\ 0 \leq j_1,j_2 \leq N_1 -1\\|j_1 - j_2|\leq 16}}}}
\|u_{N_1,L_1, j_1}\|_{L^2_{tx}}   \|v_{N_2,L_2, j_2}\|_{L^2_{tx}}\\
& \lesssim  N_1^{-1}  L_1^{\frac{1}{2}}L_2^{\frac{1}{2}} \|u_{N_1,L_1}\|_{L^2_{tx}}\|v_{N_2,L_2}\|_{L^2_{tx}}\|w_{N_3,L_3}\|_{L^2_{tx}}\\
&\lesssim N_1^{-1}  (L_1L_2L_3)^{\frac{1}{3}} \|u_{N_1,L_1}\|_{L^2_{tx}}\|v_{N_2,L_2}\|_{L^2_{tx}}\|w_{N_3,L_3}\|_{L^2_{tx}}.
\end{align*}
For the latter term, by using Proposition \ref{thm2.6}, (\ref{rad_l2est}), 
and $L_1L_2L_3\lesssim N_{1}^6$ that we get
\begin{align*}
& \sum_{64 \leq A \leq N_1} \sum_{{\tiny{\substack{0 \leq j_1,j_2 \leq A-1\\16\le |j_1 - j_2|\leq 32}}}} \sum_{j_3\in J(j_1)} 
\left|\int u_{N_1,L_1, j_1}v_{N_2,L_2, j_2}w_{N_3,L_3,j_3}dxdt\right| \\
& \lesssim  
\sum_{64 \leq A \leq N_1} \sum_{{\tiny{\substack{0 \leq j_1,j_2 \leq A-1\\16\le |j_1 - j_2|\leq 32}}}} 
\|Q_{L_3}^{-\sigma_3} P_{N_3} ( u_{N_1,L_1, j_1} v_{N_2,L_2, j_2})\|_{L^{2}_{tx}} 
\sum_{j_3\in J(j_1)} \|w_{N_3,L_3, j_3} \|_{{L^2_{t x}}}\\
& \lesssim \|w_{N_3,L_3} \|_{{L^2_{t x}}}
\sum_{64 \leq A \leq N_1}  
 N_1^{-1} ( L_1 L_2 L_3)^\frac{1}{2}
 \sum_{{\tiny{\substack{0 \leq j_1,j_2 \leq A-1\\16\le |j_1 - j_2|\leq 32}}}}  
\|u_{N_1,L_1, j_1}\|_{L^2_{tx}}   \|v_{N_2,L_2, j_2}\|_{L^2_{tx}}\\
&  \lesssim (\log{N_1}) N_1^{-1}  ( L_1 L_2 L_3)^{\frac{1}{2}} \|u_{N_1,L_1}\|_{L^2_{tx}}\|v_{N_2,L_2}\|_{L^2_{tx}}\|w_{N_3,L_3}\|_{L^2_{tx}}\\
&\lesssim (\log{N_1}) N_1^{2-6c}  ( L_1 L_2 L_3)^{c} \|u_{N_1,L_1}\|_{L^2_{tx}}\|v_{N_2,L_2}\|_{L^2_{tx}}\|w_{N_3,L_3}\|_{L^2_{tx}}.
\end{align*}
The above two estimates give the desired estimate \eqref{desired_est} for $s>0$ 
by choosing $b'$ and $c$ as $\max\{\frac{3-s}{6},\frac{1}{3}\}< c<b'<\frac{1}{2}$. 
\hspace{\fill}
$\square$
%
%
%
%
\section{Proof of the well-posedness}
In this section, we prove Theorems~\ref{wellposed_1} and ~\ref{wellposed_2}.
For a Banach space $H$ and $r>0$, we define $B_r(H):=\{ f\in H \,|\, \|f\|_H \le r \}$. 
Furthermore, we define $\mathcal{X}^{s,b}_{T}$ as
\[
\mathcal{X}^s_T:= (X^{s,b}_{\alpha,{\rm rad},T})^2\times (X^{s,b}_{\beta,{\rm rad},T})^2\times \widetilde{X}^{s+1,b}_{\gamma,{\rm rad},T},
\] 
where $X^{s,b}_{\alpha,{\rm rad},T}$ and $X^{s,b}_{\beta,{\rm rad},T}$
are the time localized spaces
defined by
\[
X^{s,b}_{\sigma,{\rm rad},T}:=\{u|_{[0,T]}|\ u\in X^{s,b}_{\sigma,{\rm rad}}\}
\]
with the norm
\[
\|u\|_{X^{s,b}_{\sigma, T}}:=\inf \{\|v\|_{X^{s,b}_{\sigma,T}}|\ v\in X^{s,b}_{\sigma ,{\rm rad}},\ v|_{[0,T]}=u|_{[0,T]}\}.
\]
Also, $\widetilde{X}^{s+1,b}_{\gamma,{\rm rad},T}$ is defined by the same way. 
Now, we restate Theorems~\ref{wellposed_1} for $d=2$ more precisely. 
\begin{thm}\label{wellposed_re}
Let $s\ge \frac{1}{2}$ if $\theta =0$ and $s>0$ if $\theta <0$. 
For any $r>0$ and for all initial data $(u_{0}, v_{0}, [W_{0}])\in B_r(\mathcal{H}^s(\R^2))$, there exist $T=T(r)>0$ and a solution
$(u,v,[W])\in \mathcal{X}^{s,b}_T$ 
to the system {\rm (\ref{NLS_rad})} on $[0, T]$  for suitable $b>\frac{1}{2}$. 
Such solution is unique in $B_R(\mathcal{X}^s_T)$ for some $R>0$. 
Moreover, the flow map
\[
S:B_{r}(\mathcal{H}^s(\R^2))\ni (u_{0},v_{0},[W_{0}])\mapsto (u,v,[W])\in \mathcal{X}^s_T
\]
is Lipschitz continuous.
\end{thm}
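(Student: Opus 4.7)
The plan is to solve (\ref{NLS_rad}) by a standard Picard iteration in the Bourgain space $\mathcal{X}^{s,b}_T$. Fix $b'\in(0,\tfrac{1}{2})$ as in Proposition~\ref{key_be} and choose $b\in(\tfrac{1}{2},1-b')$, so that $b+b'<1$. Define the solution map $\Phi=(\Phi_1,\Phi_2,\Phi_3)$ on a ball $B_R(\mathcal{X}^{s,b}_T)$ via the Duhamel integrals corresponding to~(\ref{NLS_rad}). The classical linear and inhomogeneous $X^{s,b}$-estimates (Bourgain~\cite{Bo93}) read
\begin{align*}
\|\chi(t/T)e^{it\sigma\Delta}u_0\|_{X^{s,b}_{\sigma}}&\lesssim \|u_0\|_{H^s},\\
\left\|\chi(t/T)\int_0^t e^{i(t-t')\sigma\Delta}F(t')\,dt'\right\|_{X^{s,b}_{\sigma}}&\lesssim T^{1-b-b'}\|F\|_{X^{s,-b'}_{\sigma}},
\end{align*}
for each $\sigma\in\{\alpha,\beta,\gamma\}$, the factor $T^{1-b-b'}$ providing the smallness needed for the contraction.

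Next, I would bound each nonlinear term using Proposition~\ref{key_be}. For the $u$-equation I apply~(\ref{be_222}) to $(\Delta W)v$ with $(\sigma_1,\sigma_2,\sigma_3)=(\gamma,\beta,-\alpha)$; for the $v$-equation~(\ref{be_222}) to $(\Delta\overline{W})u$ with $(\sigma_1,\sigma_2,\sigma_3)=(-\gamma,\alpha,-\beta)$ (conjugation flips the sign of the $W$-phase); for the $\nabla W$-equation~(\ref{be_111}) to $\nabla(u\cdot\overline{v})$ with $(\sigma_1,\sigma_2,\sigma_3)=(\alpha,-\beta,-\gamma)$. Each of these three triples lies in the list noted just before Proposition~\ref{key_be}, so $\widetilde{\theta}=\theta$ and $|\widetilde{\kappa}|=|\kappa|$, and the hypotheses $\widetilde{\kappa}\ne 0$, $\widetilde{\theta}\le 0$ reduce exactly to $\kappa\ne 0$ together with the sign assumption on $\theta$ in Theorem~\ref{wellposed_re}. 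Using in addition the obvious bound $\|\partial_j W\|_{X^{s,b'}_{\gamma,T}}\le \|\nabla W\|_{X^{s,b'}_{\gamma,T}}=\|[W]\|_{\widetilde{X}^{s+1,b'}_{\gamma,T}}$ and summing over the $d$ vector components, one obtains
\[
\|\Phi(u,v,[W])\|_{\mathcal{X}^{s,b}_T}\le C\|(u_0,v_0,[W_0])\|_{\mathcal{H}^s}+CT^{1-b-b'}\|(u,v,[W])\|_{\mathcal{X}^{s,b}_T}^2,
\]
and an analogous Lipschitz bound for $\Phi(A)-\Phi(A')$. For $R=2Cr$ and $T=T(r)$ sufficiently small, $\Phi$ is a contraction on $B_R(\mathcal{X}^{s,b}_T)$, from which existence, uniqueness in the ball, and Lipschitz dependence on the data follow.

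Two structural items need to be verified. First, $\Phi$ preserves the radial class: $e^{it\sigma\Delta}$ commutes with rotations; $\Delta W$ is a radial scalar, so $(\Delta W)v$ and $(\Delta\overline{W})u$ are radial $\C^d$-valued; and $u\cdot\overline{v}=\sum_j u_j\overline{v_j}$ is a radial scalar, whose gradient integrates back to a radial potential $W$ defined modulo constants, which is precisely the quotient appearing in $\widetilde{H}^{s+1}_{\rm rad}$. Thus each iterate remains in $\mathcal{X}^{s,b}_T$. Second, $\Phi_3$ is genuinely well-defined on the equivalence class $[W]$ because only $\nabla W$ enters the nonlinearities, so additive constants are immaterial; concretely one retains $W$ through $\nabla W(t)=e^{it\gamma\Delta}\nabla W_0-i\int_0^t e^{i(t-t')\gamma\Delta}\nabla(u\cdot\overline{v})(t')\,dt'$. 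The main obstacle is entirely absorbed into Proposition~\ref{key_be}, namely establishing the scaling-critical bilinear estimate in the radial class at the borderline regularity via the angular decomposition combined with the gain~(\ref{rad_l2est}) available for radial functions; once that is in hand the contraction argument above is routine, and Theorem~\ref{wellposed_2} is then obtained a posteriori by constructing a scalar potential $W_0\in\widetilde{H}^{s+1}_{\rm rad}$ for any $w_0\in\mathcal{A}^s$.
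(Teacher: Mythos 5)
Your proposal is correct and follows essentially the same route as the paper: a contraction argument for the Duhamel map in $\mathcal{X}^{s,b}_T$, using Proposition~\ref{key_be} with the three symbol triples $(\gamma,\beta,-\alpha)$, $(-\gamma,\alpha,-\beta)$, $(\alpha,-\beta,-\gamma)$ (equivalent to the paper's list since $\widetilde{\theta}$ and $\widetilde{\kappa}$ are symmetric) together with the standard linear $X^{s,b}$ estimates. The only cosmetic difference is how the small power of $T$ is produced --- you take $b\in(\tfrac12,1-b')$ and read it off the factor $T^{1-b-b'}$, whereas the paper sets $b=1-b'$ and instead gains $T^{b-b'}$ from lowering the modulation exponent of the inputs (Proposition~\ref{linear_est}(3)); both are standard and yield the same conclusion.
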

\begin{rem}
Since $X^{s,b}_T\hookrightarrow C([0,T];H^s(\R^2))$ holds 
for $b>\frac{1}{2}$, 
we have $\mathcal{X}^{s,b}_T\hookrightarrow C([0,T];\mathcal{H}^s(\R^2))$.
\end{rem}
To prove Theorem~\ref{wellposed_re}, 
we give the linear estimate. 
\begin{prop}\label{linear_est}
Let $s\in \R$, $\sigma\in \R\backslash \{0\}$, $b\in (\frac{1}{2},1]$, 
$b'\in [0,1-b]$ and 
$0<T\le 1$. 
\begin{enumerate}
\item[(1)] There exists $C_1>0$ such that for any $\varphi \in H^s(\R^2)$, we have
\[
\|e^{it\sigma \Delta}\varphi\|_{X^{s,b}_{\sigma,T}}
\le C_1\|\varphi\|_{H^s}.
\]
\item[(2)] There exists $C_2>0$ such that for any $F \in X^{s,-b'}_{\sigma,T}$, we have
\[
\left\|\int_{0}^{t}e^{i(t-t')\sigma \Delta}F(t')dt'\right\|_{X^{s,b}_{\sigma,T}}
\le C_2T^{1-b'-b}\|F\|_{X^{s,-b'}_{\sigma,T}}.
\]
\item[(3)] There exists $C_3>0$ such that for any $u \in X^{s,b}_{\sigma,T}$, we have
\[
\|u\|_{X^{s,b'}_{\sigma, T}}\le C_3T^{b-b'}\|u\|_{X^{s,b}_{\sigma, T}}. 
\]
\end{enumerate}
\end{prop}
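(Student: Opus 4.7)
\textbf{Proof proposal for Proposition~\ref{linear_est}.}

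These are by now standard Bourgain-space estimates, so the plan is to follow the classical approach going back to Bourgain, Kenig--Ponce--Vega, and Ginibre--Velo--Tsutsumi, adapting the arguments to the restricted space $X^{s,b}_{\sigma,T}$. Throughout I will fix a smooth time cutoff $\eta\in C_0^\infty(\R)$ with $\eta\equiv 1$ on $[-1,1]$ and $\supp\eta\subset(-2,2)$, and work with $\eta_T(t):=\eta(t/T)$ so that $\eta_T\equiv 1$ on $[0,T]$ for $T\le 1$.

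For (1), the plan is to produce an extension of $e^{it\sigma\Delta}\varphi\big|_{[0,T]}$ by setting $u(t,x):=\eta(t)\,e^{it\sigma\Delta}\varphi(x)$, so that $u\in X^{s,b}_{\sigma,\mathrm{rad}}$ (radiality is preserved) with $u|_{[0,T]}=e^{it\sigma\Delta}\varphi|_{[0,T]}$. Taking the space-time Fourier transform gives
\EQQ{
\widetilde{u}(\tau,\xi)=\widehat{\eta}(\tau+\sigma|\xi|^2)\,\widehat{\varphi}(\xi),
}
so after the change of variables $\tau\mapsto\tau+\sigma|\xi|^2$ the $X^{s,b}_\sigma$-norm factorises as $\|\eta\|_{H^b(\R)}\|\varphi\|_{H^s(\R^2)}$, and the estimate follows by the definition of the restricted norm.

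For (2), the plan is the usual Duhamel trick: write $\int_0^t e^{i(t-t')\sigma\Delta}F(t')\,dt' = e^{it\sigma\Delta}\int_0^t e^{-it'\sigma\Delta}F(t')\,dt'$, multiply by $\eta_T(t)$ to get an extension to all of $\R$, and expand $F$ via its space-time Fourier transform $\widetilde{F}$. Splitting the $\tau$-integral into the region $\{|\tau+\sigma|\xi|^2|\le 1/T\}$ and its complement, one obtains two contributions: on the low-modulation region one Taylor-expands $e^{it(\tau+\sigma|\xi|^2)}-1$ in powers of $t(\tau+\sigma|\xi|^2)$, and on the high-modulation region one integrates by parts once and uses that $\|\eta_T\|_{H^b(\R)}\lesssim T^{1/2-b}$ and $\|\eta_T\|_{H^{b'}(\R)}\lesssim T^{1/2-b'}$. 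Combining the two regions yields the stated factor $T^{1-b'-b}$; this is the main technical step and the one where one needs to be careful that the various $b,b'$ ranges assumed in the statement are compatible with $1-b'-b\ge 0$, which is guaranteed by the assumption $b'\le 1-b$.

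For (3), the plan is to take an arbitrary extension $\tilde{u}\in X^{s,b}_{\sigma,\mathrm{rad}}$ of $u|_{[0,T]}$, multiply by $\eta_T$, and estimate $\|\eta_T \tilde{u}\|_{X^{s,b'}_\sigma}$ in terms of $\|\tilde{u}\|_{X^{s,b}_\sigma}$ using the convolution identity $\F_t[\eta_T \tilde u](\tau)=\widehat{\eta_T}*_\tau\F_t\tilde u(\tau)$ together with the pointwise bound $\langle\tau+\sigma|\xi|^2\rangle^{b'}\le\langle\tau'+\sigma|\xi|^2\rangle^{b'}\langle\tau-\tau'\rangle^{b'}$ and Minkowski/Young to reduce matters to the scalar Sobolev estimate $\|\eta_T f\|_{H^{b'}(\R)}\lesssim T^{b-b'}\|f\|_{H^b(\R)}$, valid for $0\le b'<b$, $b>1/2$. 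Taking the infimum over all extensions $\tilde{u}$ then gives the desired bound.

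The only genuinely delicate step is (2), where one must orchestrate the decomposition in $\tau+\sigma|\xi|^2$ and keep track of the correct power of $T$; parts (1) and (3) are essentially Fourier-analytic bookkeeping once the cutoff is chosen. Since radiality of the extensions is automatic (the cutoff acts only in $t$), no new ingredient is needed to stay within the radial subspaces.
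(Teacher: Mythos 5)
The paper does not actually prove Proposition~\ref{linear_est}: it refers to Lemmas~2.1 and 3.1 of \cite{GTV97}, and your proposal reconstructs exactly the standard arguments behind those lemmas. Parts (1) and (2) of your plan are correct and are essentially the cited proofs: the factorization $\widetilde{u}(\tau,\xi)=\widehat{\eta}(\tau+\sigma|\xi|^2)\,\widehat{\varphi}(\xi)$ for (1), and the splitting at $|\tau+\sigma|\xi|^2|\sim 1/T$ with Taylor expansion on the low-modulation piece for (2), are precisely the Ginibre--Tsutsumi--Velo argument, and the exponent $T^{1-b-b'}$ comes out correctly in the range $b>\frac12$, $0\le b'\le 1-b$.

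Part (3), however, contains a genuine gap: the scalar estimate $\|\eta(t/T)f\|_{H^{b'}(\R)}\lesssim T^{b-b'}\|f\|_{H^{b}(\R)}$ that you invoke is \emph{false} for $b>\frac12$. Test it on a fixed bump $f=\eta$: for small $T$ one has $\eta(t/T)f=\eta(t/T)$, and a change of variables gives $\|\eta(t/T)\|_{H^{b'}}\sim T^{\frac12-b'}$, whereas the right-hand side is $\sim T^{b-b'}$; since $b>\frac12$, $T^{\frac12-b'}\gg T^{b-b'}$ as $T\to0$. The standard lemma (GTV Lemma~3.1, or Lemma~2.11 in Tao's book) requires the \emph{upper} exponent to satisfy $b<\frac12$; for $b>\frac12$ the gain saturates at $T^{\frac12-b'}$. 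In fact the proposition itself is overstated in this range, so no proof of (3) as written can close: take $b'=0$ and $u=e^{it\sigma\Delta}\varphi|_{[0,T]}$; every extension $v$ satisfies $\|v\|_{X^{s,0}_{\sigma}}=\|v\|_{L^2_tH^s_x}\ge T^{1/2}\|\varphi\|_{H^s}$, while $\|u\|_{X^{s,b}_{\sigma,T}}\le\|\eta\|_{H^b_t}\|\varphi\|_{H^s}$, so the claimed bound would force $T^{1/2}\lesssim T^{b}$, impossible for $b>\frac12$ and $T$ small. The correct statement replaces $T^{b-b'}$ by $T^{\min(b,\frac12)-b'}=T^{\frac12-b'}$, which is exactly what your fractional-Leibniz/Young reduction yields if carried out honestly. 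This weaker gain is still a positive power of $T$ (since $b'<\frac12$) and suffices for the contraction argument in Section~3 (there $b'>\frac13$, and one still ends with a positive power of $T$ in $CC_2C_3^2T^{\ast}R$), so the defect lies in the statement rather than in your method --- but as a proof of the proposition \emph{as stated}, step (3) cannot be completed, and you should either restrict to $b<\frac12$ or weaken the exponent.
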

For the proof of Proposition~\ref{linear_est}, 
see Lemma\ 2.1 and 3.1 in \cite{GTV97}. 

We define the map $\Phi(u,v,[W])=(\Phi_{\alpha, u_{0}}^{(1)}([W], v), \Phi_{\beta, v_{0}}^{(1)}([\overline{W}], v), [\Phi_{\gamma, [W_{0}]}^{(2)}(u, \overline{v}))])$ as
\[
\begin{split}
\Phi_{\sigma, \varphi}^{(1)}([f],g)(t)&:=e^{it\sigma \Delta}\varphi 
-i\int_{0}^{t}e^{i(t-t')\sigma \Delta}(\Delta f(t'))g(t')dt',\\
\Phi_{\sigma, [\varphi]}^{(2)}(f,g)(t)&:=e^{it\sigma \Delta}\varphi 
+i\int_{0}^{t}e^{i(t-t')\sigma \Delta} (f(t')\cdot g(t'))dt'.
\end{split}
\] 
To prove the existence of the solution of (\ref{NLS_sys}), we prove that $\Phi$ is a contraction map 
on $B_R(\mathcal{X}^s_T)$ for some $R>0$ and $T>0$. 
For a vector valued function $f=(f_1,f_2)$, $\|f\|_{H^s}$ and 
$\|f\|_{X^{s,b}_T}$ 
denote $\|f_1\|_{H^s}+\|f_2\|_{H^s}$ and $\|f_1\|_{X^{s,b}_T}+\|f_2\|_{X^{s,b}_T}$, respectively. 
%
\begin{proof}[\rm{\bf{Proof of Theorem~\ref{wellposed_re}.}}]
We choose $b>\frac{1}{2}$ as $b=1-b'$, 
where $b'$ is as in Proposition~\ref{key_be}. 
Let $(u_{0}$, $v_{0}$, $[W_{0}])\in B_{r}(\mathcal{H}^s(\R^2))$ be given. 
By Proposition~\ref{key_be} with $(\sigma_1,\sigma_2,\sigma_3)\in \{(\beta, \gamma, -\alpha), 
(-\gamma, \alpha, -\beta), (\alpha, -\beta, -\gamma)\}$ 
and Proposition~\ref{linear_est} with $\sigma \in \{\alpha, \beta, \gamma\}$, 
there exist constants $C_1$, $C_2$, $C_3>0$ such that for any $(u,v,[W])\in B_R(\mathcal{X}^s_T)$, 
we have
\[
\begin{split}
\|\Phi^{(1)}_{\alpha ,u_{0}}([W], v)\|_{X^{s,b}_{\alpha,T}}&
\leq C_1\|u_{0}\|_{H^s} +CC_2C_3^2T^{4b-2}
\|[W]\|_{\widetilde{X}^{s+1,b}_{\gamma,T}}
\|v\|_{X^{s,b}_{\beta,T}} \\
&\leq C_1r+CC_2C_3^2T^{4b-2}R^2,\\
\|\Phi^{(1)}_{\beta ,v_{0}}([\overline{W}], u)\|_{X^{s,b}_{\beta,T}}&
\leq C_1\|v_{0}\|_{H^s} +CC_2C_3^2T^{4b-2}\|[W]\|_{\widetilde{X}^{s+1,b}_{\gamma,T}}
\|u\|_{X^{s,b}_{\alpha,T}}\\
&\leq C_1r+CC_2C_3^2T^{4b-2}R^2,\\
\|[\Phi^{(2)}_{\gamma ,[W_{0}]}(u, \overline{v})]\|_{\widetilde{X}^{s+1,b}_{\gamma,T}}&
\leq C_1\|[W_{0}]\|_{\widetilde{H}^{s+1}} +CC_2C_3^2T^{4b-2}\|u\|_{X^{s,b}_{\alpha,T}}
\|v\|_{X^{s,b}_{\beta,T}}\\
&\leq C_1r+CC_2C_3^2T^{4b-2}R^2.
\end{split}
\]
Similarly,
\[
\begin{split}
&\|\Phi^{(1)}_{\alpha ,u_{0}}([W], v)-\Phi^{(1)}_{\alpha ,u_{0}}([W'], v')\|_{X^{s,b}_{\alpha,T}} \\
&\quad \leq CC_2C_3^2T^{4b-2}R\left( \|[W]-[W']\|_{\widetilde{X}^{s+1,b}_{\gamma,T}}+\|v-v'\|_{X^{s,b}_{\beta,T}}\right),\\
&\|\Phi^{(1)}_{\beta ,v_{0}}([\overline{W}], u)-\Phi^{(1)}_{\beta ,v_{0}}([\overline{W'}], u')\|_{X^{s,b}_{\beta,T}} \\
&\quad \leq CC_2C_3^2T^{4b-2}R\left( \|[W]-[W']\|_{\widetilde{X}^{s+1,b}_{\gamma,T}}+\|u-u'\|_{X^{s,b}_{\alpha,T}}\right),\\
&\|[\Phi^{(2)}_{\gamma ,[W_{0}]}(u, \overline{v})]-[\Phi^{(2)}_{\gamma ,[W_{0}]}(u', \overline{v'})]\|_{\widetilde{X}^{s+1,b}_{\gamma,T}} \\
&\quad \leq CC_2C_3^2T^{4b-2}R\left( \|u-u'\|_{X^{s,b}_{\alpha,T}}+\|v-v'\|_{X^{s,b}_{\beta,T}}\right).
\end{split}
\]
Therefore if we choose $R>0$ and $T>0$ as
\[
R=6C_1r,\ CC_2C_3^2T^{4b-2}R\le \frac{1}{4}
\]
then $\Phi$ is a contraction map on $B_R(\mathcal{X}^s_T)$. 
This implies the existence of the solution of the system (\ref{NLS_sys}) and the uniqueness in the ball $B_R(\mathcal{X}^s_T)$. 
The Lipschitz-continuity of the flow map is also proved by similar argument. 
\end{proof} 
Next, to prove Theorem~\ref{wellposed_2}, 
we justify the existence of a scalar potential of $w\in (H^s(\R^2))^2$. 
Let $\F_1$ and $\F_2$ denote the 
Fourier transform with respect to 
the first component and the second component, respectively. 
We note that $\F_1^{-1}\F_2^{-1}=\F_2^{-1}\F_1^{-1}=\F_x^{-1}$ 
(and also $\F_1\F_2=\F_2\F_1=\F_x$) holds on $L^2(\R^2)$. 
\begin{prop}\label{ex_sc}
Let $s>\frac{1}{2}$ and $w=(w_1,w_2)\in (H^s(\R^2))^2$. 
If $w_1$ and $w_2$ satisfy
\[
\xi_2\widehat{w_1}(\xi)-\xi_1\widehat{w_2}(\xi) =0\ \ {\rm a.e.}\ \xi=(\xi_1,\xi_2)\in \R^2,
\]
then there exists $W\in L^1_{\rm loc}(\R^2)$ $(\subset \s'(\R^2))$ such that
\[
\nabla W(x)=w(x)\ \ {\rm a.e.}\ x=(x_1,x_2)\in \R^2. 
\]
\end{prop}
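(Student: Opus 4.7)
My plan is to construct $W$ explicitly via the iterated-integral formula suggested in the discussion preceding the statement, and to check the regularity claim by combining trace estimates in Sobolev spaces with the distributional form of the irrotational condition.

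The first step is to exploit $s>\frac{1}{2}$: by the standard trace theorem, $w_2(0,\cdot)\in H^{s-\frac{1}{2}}(\R)\subset L^2(\R)$ is well-defined on the line $x_1=0$, and by Fubini $w_1(\cdot,x_2)\in L^2(\R)$ for almost every $x_2\in\R$. This makes
\[
W(x):=\int_0^{x_1}w_1(y_1,x_2)\,dy_1+\int_0^{x_2}w_2(0,y_2)\,dy_2
\]
well-defined for almost every $x\in\R^2$, and a direct application of the Cauchy--Schwarz inequality together with Fubini's theorem yields $W\in L^1_{\rm loc}(\R^2)$, hence $W\in\s'(\R^2)$.

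The second step is to verify $\partial_1W=w_1$ and $\partial_2W=w_2$ as distributions. The identity $\partial_1W=w_1$ follows from the fundamental theorem of calculus applied on almost every horizontal slice, after testing against $\varphi\in C_c^\infty(\R^2)$ and using that the $x_2$-integral term in $W$ is constant in $x_1$. For the second identity, the hypothesis $\xi_2\widehat{w_1}-\xi_1\widehat{w_2}=0$ is equivalent to the distributional identity $\partial_2 w_1=\partial_1 w_2$ on $\R^2$; using it together with an integration by parts in $y_1$ inside the first integral defining $W$ produces a boundary contribution at $y_1=0$ that cancels precisely the derivative of the second integral, leaving $\partial_2W=w_2$.

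The main obstacle I anticipate is justifying these manipulations rigorously when $w$ has only Sobolev regularity. The cleanest way I see is to work first with smooth mollifications $w^\e:=w*\rho_\e$: these are smooth and remain irrotational since the Fourier-multiplier condition $\xi_2\widehat{w_1}=\xi_1\widehat{w_2}$ is preserved under multiplication by the scalar $\widehat{\rho_\e}$, so the Poincar\'e lemma furnishes classical potentials $W^\e$ with $\nabla W^\e=w^\e$, given by the same iterated-integral formula. The continuity of the trace operator $H^s(\R^2)\to H^{s-\frac{1}{2}}(\R)$ together with a Cauchy--Schwarz estimate then delivers $W^\e\to W$ in $L^1_{\rm loc}(\R^2)$ as $\e\to 0$, and passing to the limit in $\nabla W^\e=w^\e$ in the distributional sense completes the argument.
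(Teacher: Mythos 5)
Your construction of $W$ is the same iterated line-integral as in the paper (the paper uses base point $(a_1,a_2)$, you use the origin), and both arguments spend the hypothesis $s>\tfrac12$ in the same place, namely to make sense of $w_2$ restricted to a vertical line: you invoke the trace theorem $H^s(\R^2)\to H^{s-1/2}(\R)$, while the paper proves a small lemma asserting that the partial Fourier transform $\F_2[w_2](x_1,\cdot)$ lies in $L^1(\R)$ for a.e.\ $x_1$ (so that $w_2(a_1,\cdot)$ is continuous for a suitable $a_1$). Where the two proofs genuinely diverge is in the verification of $\partial_2 W=w_2$. The paper computes the difference quotient $h^{-1}\bigl(W_1(x_1,x_2+h)-W_1(x_1,x_2)\bigr)$ directly, rewrites it via partial Fourier transforms and Fubini, inserts the hypothesis in the form $\xi_2\widehat{w_1}=\xi_1\widehat{w_2}$, and passes to the limit $h\to0$ by dominated convergence; this yields the classical a.e.\ partial derivative. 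You instead regularize: mollification preserves the irrotationality condition, the Poincar\'e lemma gives the smooth potentials $W^\e$ by the same formula, and continuity of the trace plus Cauchy--Schwarz give $W^\e\to W$ in $L^1_{\rm loc}$, so that $\nabla W=w$ follows by passing to the limit in $\s'$. Your heuristic second paragraph (integration by parts producing a cancelling boundary term) would be delicate to justify directly at this regularity, but your mollification argument supersedes it and is sound. The trade-off: the paper's computation delivers the pointwise a.e.\ derivative asserted literally in the statement, whereas your argument delivers the distributional identity $\nabla W=w$ with $w\in L^2$ --- which is what is actually used later to place $[W]$ in $\widetilde{H}^{s+1}_{\rm rad}$ --- by a shorter and more standard route. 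Both are acceptable proofs of the proposition as it is used in the paper.
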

To obtain Proposition~\ref{ex_sc}, we use the next lemma. 
\begin{lemm}\label{l1_lem}
Let $s>\frac{1}{2}$. If $f\in H^s(\R^2)$, then it hold that
\[
\F_1[f](\cdot,x_2)\in L^1(\R)\ \ {\rm a.e.}\ x_2\in \R,\ \ 
\F_2[f](x_1,\cdot)\in L^1(\R)\ \ {\rm a.e.}\ x_1\in \R.
\]
\end{lemm}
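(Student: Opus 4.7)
The plan is to reduce the claim to a Cauchy--Schwarz estimate that separates the $\xi_1$-variable with a weight, together with a Fubini argument in the remaining variable. I will only argue the first statement; the second is symmetric.

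First I would note that for fixed $x_2$,
\[
\int_{\R}|\F_1[f](\xi_1,x_2)|\,d\xi_1
\le \Bigl(\int_{\R}\LR{\xi_1}^{-2s}\,d\xi_1\Bigr)^{\frac12}
\Bigl(\int_{\R}\LR{\xi_1}^{2s}|\F_1[f](\xi_1,x_2)|^2\,d\xi_1\Bigr)^{\frac12},
\]
where the first factor is finite precisely because $s>\frac12$. So it suffices to show that the inner $L^2$-quantity
\[
G(x_2):=\int_{\R}\LR{\xi_1}^{2s}|\F_1[f](\xi_1,x_2)|^2\,d\xi_1
\]
is finite for a.e.\ $x_2\in\R$.

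Next I would integrate $G$ over $x_2$ and swap the order of integration. For each fixed $\xi_1$, Plancherel's theorem applied in the $x_2$-variable gives
$\|\F_1[f](\xi_1,\cdot)\|_{L^2_{x_2}}=\|\widehat{f}(\xi_1,\cdot)\|_{L^2_{\xi_2}}$, since $\F_2\F_1 f=\widehat f$. Consequently,
\[
\int_{\R}G(x_2)\,dx_2
=\int_{\R^2}\LR{\xi_1}^{2s}|\widehat{f}(\xi_1,\xi_2)|^2\,d\xi_1 d\xi_2
\le \int_{\R^2}\LR{\xi}^{2s}|\widehat{f}(\xi)|^2\,d\xi
=\|f\|_{H^s}^2<\infty.
\]
By Fubini (or Tonelli, since the integrand is nonnegative), $G(x_2)<\infty$ for a.e.\ $x_2$, which yields $\F_1[f](\cdot,x_2)\in L^1(\R)$ for a.e.\ $x_2$ through the Cauchy--Schwarz bound above.

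There is no real obstacle here: the only point to watch is that $s>\tfrac12$ is used in exactly one place, namely to ensure that $\LR{\xi_1}^{-2s}$ is integrable on $\R$, which is the standard Sobolev embedding threshold in one dimension. The statement for $\F_2[f](x_1,\cdot)$ follows verbatim after interchanging the roles of the two coordinates.
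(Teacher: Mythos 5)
Your proof is correct and is essentially the paper's own argument: Cauchy--Schwarz in $\xi_1$ with the weights $\langle\xi_1\rangle^{\mp s}$ (using $s>\tfrac12$ for integrability of $\langle\xi_1\rangle^{-2s}$), Plancherel in the second variable to pass from $\F_1[f]$ to $\widehat f$, and a.e.\ finiteness from the resulting $L^2_{x_2}$ bound. The only cosmetic difference is that you integrate the squared quantity $G(x_2)$ and invoke Tonelli explicitly, whereas the paper bounds $\bigl\|\,\|\F_1[f](\xi_1,x_2)\|_{L^1_{\xi_1}}\bigr\|_{L^2_{x_2}}$ directly.
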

\begin{proof}
We prove only for $\F_1[f]$. 
By the Cauchy-Schwarz inequality 
and Plancherel's theorem, we have
\[
\begin{split}
\left\|\|\F_1[f](\xi_1,x_2)\|_{L^1_{\xi_1}}\right\|_{L^2_{x_2}}
&\le \left\|\|\langle \xi_1\rangle^{-s}\|_{L^2_{\xi_1}}
\|\langle \xi_1\rangle^s\F_1[f](\xi_1,x_2)\|_{L^2_{\xi_1}}\right\|_{L^2_{x_2}}\\
&\lesssim \|\langle \xi_1\rangle^s\widehat{f}(\xi_1,\xi_2)\|_{L^2_{\xi}}\\
&\lesssim \|f\|_{H^s}<\infty
\end{split}
\]
for $s>\frac{1}{2}$.
Therefore, we obtain
\[
\|\F_1[f](\xi_1,x_2)\|_{L^1_{\xi_1}}<\infty\ \ {\rm a.e.}\ x_2\in \R. \qedhere
\]
\end{proof}
\begin{proof}[Proof of Proposition~\ref{ex_sc}]
We put
\[
W(x):=\int_{a_1}^{x_1}w_1(y_1,x_2)dy_1+\int_{a_2}^{x_2}w_2(a_1,y_2)dy_2
=:W_1(x)+W_2(x)
\]
for some $a_1$, $a_2\in \R$.
By $w \in L^2(\R^2)$, we have $W\in L^1_{\rm loc}(\R^2)$. 
Hence, it remains to show that $\nabla W=w$. 
Since 
\[
\partial_1W_1(x)=w_1(x),\ \ \partial_1W_2(x)=0,\ \ \partial_2W_2(x)=w_2(a_1,x_2)
\]
hold for almost all $x=(x_1,x_2)\in \R^2$, it suffices to show
\begin{equation}\label{deriv}
\partial_2W_1(x)=w_2(x)-w_2(a_1,x_2)\ \ {\rm a.e.}\ x=(x_1,x_2)\in \R^2. 
\end{equation}
Let $h\in \R$. 
Since $\F_1[w_1](\cdot ,x_2)\in L^1(\R)$ 
a.e. $x_2\in \R$ by Lemma~\ref{l1_lem}, we have
\[
\begin{split}
&\frac{W_1(x_1,x_2+h)-W_1(x_1,x_2)}{h}\\
&=\frac{1}{h}\int_{a_1}^{x_1}\left(w_1(y_1,x_2+h)-w_1(y_1,x_2)\right)dy_1\\
&=\frac{1}{h}\int_{a_1}^{x_1}
\left(\int_{\R}
\left(\F_1[w_1](\xi_1,x_2+h)-\F_1[w_1](\xi_1,x_2)\right)e^{i\xi_1y_1}
d\xi_1\right)
dy_1\\
&=\frac{1}{h}
\int_{\R}
\left(\F_1[w_1](\xi_1,x_2+h)-\F_1[w_1](\xi_1,x_2)\right)
\left(\int_{a_1}^{x_1}e^{i\xi_1y_1}dy_1\right)
d\xi_1\\
&=\frac{1}{h}\int_{\R}\left(\int_{\R}\widehat{w_1}(\xi_1,\xi_2)e^{i\xi_2x_2}
(e^{i\xi_2h}-1)d\xi_2\right)\frac{e^{i\xi_1x_1}-e^{i\xi_1a_1}}{i\xi_1}d\xi_1
=:I_h
\end{split}
\]
by Fubini's theorem. 
Furthermore, by using $\xi_2\widehat{w_1}=\xi_1\widehat{w_2}$ 
and $\F_1^{-1}\F_2^{-1}=\F_2^{-1}\F_1^{-1}$, we have
\[
\begin{split}
I_h&=
\int_{\R}\left(\int_{\R}\widehat{w_2}(\xi_1,\xi_2)\frac{e^{i\xi_2h}-1}{i\xi_2h}e^{i\xi_2x_2}
d\xi_2\right)(e^{i\xi_1x_1}-e^{i\xi_1 a_1})d\xi_1\\
&=\F_1^{-1}\F_2^{-1}\left[\widehat{w_2}(\xi_1,\xi_2)\frac{e^{i\xi_2h}-1}{i\xi_2h}\right](x_1,x_2)-\F_1^{-1}\F_2^{-1}\left[\widehat{w_2}(\xi_1,\xi_2)\frac{e^{i\xi_2h}-1}{i\xi_2h}\right](a_1,x_2)\\
&=\F_2^{-1}\F_1^{-1}\left[\widehat{w_2}(\xi_1,\xi_2)\frac{e^{i\xi_2h}-1}{i\xi_2h}\right](x_1,x_2)-\F_2^{-1}\F_1^{-1}\left[\widehat{w_2}(\xi_1,\xi_2)\frac{e^{i\xi_2h}-1}{i\xi_2h}\right](a_1,x_2)\\
&=\int_{\R}\left(\F_2[w_2](x_1,\xi_2)-\F_2[w_2](a_1,\xi_2)\right)\frac{e^{i\xi_2h}-1}{i\xi_2h}e^{i\xi_2x_2}d\xi_2. 
\end{split}
\]
Since
$\F_2[w_2](x_1,\cdot )\in L^1(\R)$ 
a.e. $x_1\in \R$ by Lemma~\ref{l1_lem}, 
we have
\[
\begin{split}
\lim_{h\rightarrow 0}I_h&=
\int_{\R}\left(\F_2[w_2](x_1,\xi_2)-\F_2[w_2](a_1,\xi_2)\right)e^{i\xi_2x_2}d\xi_2\\
&=w_2(x_1,x_2)-w_2(a_1,x_2)
\end{split}
\]
by Lebesgue's dominant convergence theorem. 
Therefore, we obtain (\ref{deriv}). 
\end{proof}
\begin{rem}
In the proof of Proposition~\ref{ex_sc}, we also used
\[
\left|\frac{e^{i\xi_2h}-1}{i\xi_2h}\right|
\le \sup_{z\in \R}\left(\left|\frac{\cos z-1}{z}\right|+\left|\frac{\sin z}{z}\right|\right)
<\infty. 
\]
This implies
\[
\widehat{w_2}(\xi_1,\xi_2)\frac{e^{i\xi_2h}-1}{i\xi_2h}\in L^2_{\xi}(\R^2)
\]
and
\[
\F_2[w_2](x_1,\xi_2)\frac{e^{i\xi_2h}-1}{i\xi_2h}\in L^1_{\xi_2}(\R)\ \ {\rm a.e.}\ 
x_1\in \R. 
\]
\end{rem}
\begin{rem}
If $w=(w_1,w_2)\in (H^s(\R^2))^2$ for $s>\frac{1}{2}$ satisfies
\[
x_2w_1(x)-x_1w_2(x)=0,\ \ {\rm a.e.}\ x\in \R^2
\]
additionally in Proposition~\ref{ex_sc}, 
then $W\in L^1_{\rm loc}(\R^2)$ given in the proof of Proposition~\ref{ex_sc} 
is radial.
Indeed, this condition with $\nabla W(x)=w(x)$ yields \eqref{sc_po_rad}.
\end{rem}
\begin{rem}
For $s\le \frac{1}{2}$, we do not know whether 
there exists a scalar potential of $w\in (H^s(\R^2))^2$ or not. 
But we point out that if $s<\frac{1}{2}$, then 
the 1D delta function appears 
in $\partial_2w_1-\partial_1w_2$ 
for some $w\in (H^s(\R^2))^2$. 
Then, the irrotational condition does not make sense 
for pointwise. 
\end{rem}
Next, we prove that $\mathcal{A}^s(\R^2)$ is a Banach space.
\begin{prop}
For $s\ge 0$,  
$\mathcal{A}^s(\R^2)$ is a closed subspace of $(H^s(\R^2))^2$. 
\end{prop}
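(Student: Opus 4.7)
The plan is to verify the two defining properties separately: that $\mathcal{A}^s(\R^2)$ is a linear subspace and that it is closed in the $(H^s)^2$-topology. Linearity is immediate because both relations in (\ref{rot_cond}) are $\C$-linear in $w$: any finite linear combination of elements satisfying them again satisfies them pointwise a.e. in $\xi$ and in $x$.

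For closedness, I will take a sequence $w^{(n)} = (w_1^{(n)}, w_2^{(n)}) \in \mathcal{A}^s(\R^2)$ with $w^{(n)} \to w = (w_1, w_2)$ in $(H^s(\R^2))^2$ and show that $w$ satisfies (\ref{rot_cond}). Since $s \geq 0$, the embedding $H^s(\R^2) \hookrightarrow L^2(\R^2)$ gives $w_j^{(n)} \to w_j$ in $L^2$, and simultaneously $\widehat{w_j^{(n)}} \to \widehat{w_j}$ in $L^2$ (because the Fourier transform is an $L^2$-isometry). Both convergences therefore hold in $L^2_{\rm loc}$.

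For the Fourier-side condition, define $F^{(n)}(\xi) := \xi_1 \widehat{w_2^{(n)}}(\xi) - \xi_2 \widehat{w_1^{(n)}}(\xi)$ and $F(\xi) := \xi_1 \widehat{w_2}(\xi) - \xi_2 \widehat{w_1}(\xi)$. On any compact $K \subset \R^2$ the coordinates $\xi_1, \xi_2$ are bounded, so multiplication by them is a bounded operator on $L^2(K)$; hence $F^{(n)} \to F$ in $L^2(K)$. By hypothesis $F^{(n)} = 0$ a.e., so the limit satisfies $F = 0$ a.e.\ on $K$, and exhausting $\R^2$ by an increasing sequence of compacts gives $F = 0$ a.e.\ on $\R^2$. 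The spatial-side condition $x_1 w_2(x) - x_2 w_1(x) = 0$ is handled in exactly the same way, using $w_j^{(n)} \to w_j$ in $L^2_{\rm loc}(\R^2)$ and the local boundedness of the multipliers $x_1, x_2$.

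No serious obstacle is anticipated, since the whole argument reduces to the elementary fact that an a.e.\ pointwise identity is preserved under $L^2_{\rm loc}$ limits after multiplication by a locally bounded function. The only point requiring mild care is to treat the Fourier-side and the physical-side constraints separately, as they involve the $L^2_{\rm loc}$ convergence of $\widehat{w_j^{(n)}}$ and of $w_j^{(n)}$, respectively; both, however, follow automatically from $H^s$ convergence when $s \geq 0$.
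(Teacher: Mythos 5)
Your proof is correct and follows essentially the same route as the paper: both arguments reduce closedness to the fact that the pointwise a.e.\ identities in (\ref{rot_cond}) survive the $L^2$ limit once the unbounded multipliers $x_j$ (resp.\ $\xi_j$) are tamed --- you do this by localizing to compact sets and exhausting $\R^2$, while the paper divides by $\langle x\rangle$ to get a globally bounded weight; the two devices are interchangeable. Your explicit verification that $\mathcal{A}^s$ is a linear subspace is a harmless addition the paper leaves implicit.
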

\begin{proof}
Let $f^{(n)}=(f_1^{(n)}, f_2^{(n)})\in \mathcal{A}^s(\R^2)$ $(n=1,2,3,\cdots )$ and 
$f=(f_1,f_2)\in (H^s(\R^2))^2$. 
Assume that 
$f^{(n)}$ convergences to $f$ in $(H^s(\R^2))^2$ as $n\rightarrow \infty$. 
We prove $f\in A^s(\R^2)$, namely $f$ satisfies (\ref{rot_cond}). 
By the triangle inequality, we have
\[
\begin{split}
&\left\|\frac{x_2}{\langle x \rangle}f_1-\frac{x_1}{\langle x \rangle}f_2\right\|_{L^2}\\
&\le 
\left\|\frac{x_2}{\langle x \rangle}f_1-\frac{x_2}{\langle x \rangle}f_1^{(n)}\right\|_{L^2}
+
\left\|\frac{x_2}{\langle x \rangle}f_1^{(n)}-\frac{x_1}{\langle x \rangle}f_2^{(n)}\right\|_{L^2}
+
\left\|\frac{x_1}{\langle x \rangle}f_2^{(n)}-\frac{x_1}{\langle x \rangle}f_2\right\|_{L^2}\\
&\le \|f_1-f_1^{(n)}\|_{L^2}+\|x_2f_1^{(n)}-x_1f_2^{(n)}\|_{L^2}+\|f_2^{(n)}-f_2\|_{L^2}.
\end{split}
\]
Since $f^{(n)}$ satisfies (\ref{rot_cond}) and $f^{(n)}\rightarrow f$ in $(L^2(\R^2))^2$ 
as $n\rightarrow \infty$, 
we obtain
\[
\|x_2f_1^{(n)}-x_1f_2^{(n)}\|_{L^2}=0,\ \ 
\|f_1-f_1^{(n)}\|_{L^2}+\|f_2^{(n)}-f_2\|_{L^2}\rightarrow 0\ (n\rightarrow \infty). 
\]
Therefore, we get
\[
\left\|\frac{x_2}{\langle x \rangle}f_1-\frac{x_1}{\langle x \rangle}f_2\right\|_{L^2}=0.
\]
It implies $x_2f_1(x)-x_1f_2(x)=0$ a.e. $x\in \R^2$. 
Similarly, we obtain $\xi_2\widehat{f_1}(\xi)-\xi_1\widehat{f_2}(\xi)=0$ a.e. $\xi\in \R^2$. 
\end{proof}
\begin{proof}[\rm{\bf{Proof of Theorem~\ref{wellposed_2}.}}]
Let $(u_0,v_0,w_0)\in B_r((H_{\rm rad}^s(\R^2))^2\times (H_{\rm rad}^s(\R^2))^2 \times \mathcal{A}^s(\R^2))$ be given. 
We first prove the existence of solution to (\ref{NLS_sys}). 
Since $w_0$ satisfies (\ref{rot_cond}), by Proposition~\ref{ex_sc}, 
there exists $[W_0]\in \widetilde{H}^{s+1}_{\rm rad}$ such that 
$\nabla W_0=w_0$. 
From Theorem~\ref{wellposed_1}, 
there exists $T>0$ and a solution $(u,v,[W])\in \mathcal{X}^{s}_T$ 
to (\ref{NLS_rad}) with $(u,v,[W])|_{t=0}=(u_0,v_0,[W_0])$. 
Since
\[
\|[W_0]\|_{\widetilde{H}^{s+1}}=\|w_0\|_{H^s}\le r, 
\]
the existence time $T$ is decided by $r$. 
We put $w=\nabla W$. 
Then, $w\in X^{s,b}_{\gamma, T}$ satisfying 
\[
\|w\|_{X^{s,b}_{\gamma, T}}
=\|[W]\|_{\widetilde{X}^{s+1,b}_{\gamma, T}}
\le R, 
\]
where $R$ is as in the proof of Theorem~\ref{wellposed_1},
and 
$(u,v,w)$ satisfies (\ref{NLS_sys}) since $\Delta W=\nabla \cdot w$. 
Furthermore, we have
\[
\partial_1w_2-\partial_2w_1=\partial_1(\partial_2W)-\partial_2(\partial_1W)=0
\]
and
\[
x_1w_2-x_2w_1=(x_1\partial_2-x_2\partial_1)W=0
\]
because $W$ is radial with respect to $x$. 
Therefore, $w(t)\in \mathcal{A}^s(\R^2)$ for any $t\in [0,T]$. 

Next, we prove the uniqueness of the solution in $B_R(\mathcal{Y}^{s,b}_T)$, 
where
\[
\begin{split}
\mathcal{Y}^{s,b}_T&:=(X^{s,b}_{\alpha,{\rm rad},T})^2\times (X^{s,b}_{\beta,{\rm rad},T})^2\times Y^{s,b}_{\gamma,T},\\
Y^{s,b}_{\gamma, T}&:=
\{w=(w_1,w_2)\in (X^{s,b}_{\gamma,T})^2|w(t)\ {\rm satisfies}\ (\ref{rot_cond})\ 
{\rm for\ any\ }t\in[0,T]\}. 
\end{split}
\] 
Let $(u^{(1)},v^{(1)},w^{(1)})$, $(u^{(2)},v^{(2)},w^{(2)})\in B_R(\mathcal{Y}^{s,b}_T)$ are solution to (\ref{NLS_sys}) 
with initial data $(u_0,v_0,w_0)$. 
Then by Proposition~\ref{ex_sc},  there exists $[W^{(1)}]$, $[W^{(2)}]\in \widetilde{X}^{s+1,b}_{\gamma,{\rm rad},T}$ 
such that $w^{(1)}=\nabla W^{(1)}$, $w^{(2)}=\nabla W^{(2)}$. 
By substituting $w^{(j)}=\nabla W^{(j)}$ 
in both sides of the integral form of (\ref{NLS_sys}), 
$(u^{(j)},v^{(j)},W^{(j)})$ $(j=1,2)$ satisfy
\[
\begin{split}
u^{(j)}(t)&=e^{it\alpha \Delta}u_0+i\int_0^te^{i(t-t')\alpha \Delta}(\Delta W^{(j)}(t'))u^{(j)}(t')dt'\ \ {\rm in}\ (H^s(\R^2))^2,\\
v^{(j)}(t)&=e^{it\beta \Delta}v_0+i\int_0^te^{i(t-t')\beta \Delta}(\Delta \overline{W^{(j)}(t')})v^{(j)}(t')dt'\ \ {\rm in}\ (H^s(\R^2))^2,\\
\nabla W^{(j)}(t)&=e^{it\gamma \Delta}w_0-i\int_0^te^{i(t-t')\gamma \Delta}\nabla (u^{(j)}(t')\cdot \overline{v^{(j)}(t')})dt'\ \ {\rm in}\ H^s(\R^2). 
\end{split}
\]
Therefore, by the same argument as in the proof of Theorem~\ref{wellposed_1}, 
we have
\[
\begin{split}
\|u^{(1)}-u^{(2)}\|_{X^{s,b}_{\alpha ,T}}
&\le \frac{1}{4}(\|w^{(1)}-w^{(2)}\|_{X^{s,b}_{\gamma,T}}
+\|v^{(1)}-v^{(2)}\|_{X^{s,b}_{\beta,T}})\\
\|v^{(1)}-v^{(2)}\|_{X^{s,b}_{\beta ,T}}
&\le \frac{1}{4}(\|w^{(1)}-w^{(2)}\|_{X^{s,b}_{\gamma,T}}
+\|u^{(1)}-u^{(2)}\|_{X^{s,b}_{\alpha,T}})\\
\|w^{(1)}-w^{(2)}\|_{X^{s,b}_{\gamma ,T}}
&\le \frac{1}{4}(\|u^{(1)}-u^{(2)}\|_{X^{s,b}_{\alpha,T}}
+\|v^{(1)}-v^{(2)}\|_{X^{s,b}_{\beta,T}})
\end{split}
\]
since $w^{(1)}-w^{(2)}=\nabla (W^{(1)}-W^{(2)})$. 
This implies $(u^{(1)},v^{(1)},w^{(1)})=(u^{(2)},v^{(2)},w^{(2)})$ 
on $[0,T]$. 
The continuous dependence on initial data can be obtained 
by the similar argument.  
\end{proof}
%
%
%
\section{The lack of the twice differentiability of the flow map}
The following proposition implies Theorem~\ref{ill-posed}.
\begin{prop}\label{notC2}
Let $d=2$ and $0<T\ll 1$. 
Assume $\theta =0$ and $s<\frac{1}{2}$. 
For every $C>0$, there exist $f$, $g\in H_{\rm rad}^{s}(\R^{2})$ such that
\begin{equation}\label{flow_map_estimate}
\sup_{0\leq t\leq T}\left\|\int_{0}^{t}e^{i(t-t')\gamma \Delta}\nabla \left((e^{it'\alpha \Delta}f)(\overline{e^{it'\beta \Delta}g})\right)dt'\right\|_{H^{s}}
\geq C\|f\|_{H^{s}}\|g\|_{H^{s}}. 
\end{equation}
\end{prop}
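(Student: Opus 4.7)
The plan is to exhibit radial test functions $f, g$ for which the second Picard iterate $\mathcal{B}(f,g)(t) := \int_0^t e^{i(t-t')\gamma\Delta}\nabla\bigl((e^{it'\alpha\Delta}f)\overline{e^{it'\beta\Delta}g}\bigr)dt'$ blows up in $H^s$ like $N^{1/2-s}$ for a large parameter $N$, which exceeds $C\|f\|_{H^s}\|g\|_{H^s}$ for $s<\tfrac12$ and $N$ large. Passing to the Fourier side gives
\begin{equation*}
\mathcal{F}_x\mathcal{B}(f,g)(t,\xi) = i\xi\, e^{-it\gamma|\xi|^2}\int_{\R^2} \frac{e^{it\Phi(\xi,\xi_1)}-1}{i\Phi(\xi,\xi_1)}\,\widehat{f}(\xi_1)\,\overline{\widehat{g}(\xi_1-\xi)}\,d\xi_1,
\end{equation*}
with resonance function $\Phi(\xi,\xi_1):=\gamma|\xi|^2-\alpha|\xi_1|^2+\beta|\xi-\xi_1|^2$. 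Completing the square in $\xi_1$ and using the identity $(\gamma+\beta)(\beta-\alpha)-\beta^2 = \beta\gamma-\alpha\gamma-\alpha\beta = \theta$, the hypothesis $\theta=0$ yields the perfect-square factorization
\begin{equation*}
\Phi(\xi,\xi_1) = (\beta-\alpha)\bigl|\xi_1 - \xi_1^*(\xi)\bigr|^2, \qquad \xi_1^*(\xi) := \tfrac{\beta}{\beta-\alpha}\xi,
\end{equation*}
so the resonance locus $\{\Phi=0\}$ is the line $\xi_1=\xi_1^*(\xi)$ and $|\Phi|\lesssim 1$ on any disc of radius $\sim 1$ around it.

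Next I would choose the test functions adapted to this resonance. For large dyadic $N$ set $N_1:=|\beta|N$, $N_2:=|\alpha|N$, $N_3:=|\beta-\alpha|N$ (all $\sim N$), and
\begin{equation*}
\widehat{f}(\xi) := N^{-s-\frac12}\mathbf{1}_{A_{N_1}}(\xi),\qquad \widehat{g}(\xi) := N^{-s-\frac12}\mathbf{1}_{A_{N_2}}(\xi),\qquad A_M:=\{\xi\in\R^2 : M\le|\xi|\le M+1\}.
\end{equation*}
Both Fourier transforms are radial, so $f,g\in H^s_{\rm rad}(\R^2)$ with $\|f\|_{H^s}\sim\|g\|_{H^s}\sim 1$. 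By construction, at $|\xi|=N_3$ the resonance point satisfies $|\xi_1^*|=N_1$ and $|\xi-\xi_1^*|=N_2$, so the two level circles $\{|\xi_1|=N_1\}$ and $\{|\xi-\xi_1|=N_2\}$ touch tangentially at $\xi_1^*$. Moving $|\xi|$ a constant amount past $N_3$, a second-order expansion in coordinates $\mu=\xi_1-\xi_1^*$ (split into components along and perpendicular to $\xi$) shows that for $|\xi|\in[N_3+c_1,N_3+c_2]$ with $0<c_1<c_2$ depending only on $\alpha,\beta$, the lens $L_\xi:=\{\xi_1 : \xi_1\in A_{N_1},\ \xi-\xi_1\in A_{N_2}\}$ contains a disc $D\subset L_\xi$ of radius $\sim 1$ centered near $\xi_1^*(\xi)$, on which both indicators are nonzero and $|\Phi|\lesssim 1$.

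On $D$, for $t\le T\ll 1$, $\operatorname{Re}\tfrac{e^{it\Phi}-1}{i\Phi} = \tfrac{\sin(t\Phi)}{\Phi}\gtrsim t$, and since the integrand of the lens contribution is nonnegative, combining with the factor $|\xi|\sim N$ from the gradient gives the pointwise lower bound
\begin{equation*}
|\mathcal{F}_x\mathcal{B}(f,g)(t,\xi)| \;\ge\; |\xi|\cdot\operatorname{Re}\!\!\int_D \tfrac{e^{it\Phi}-1}{i\Phi}\widehat{f}(\xi_1)\widehat{g}(\xi-\xi_1)\,d\xi_1 \;\gtrsim\; N\cdot t\cdot N^{-2s-1}\cdot|D| \;\sim\; tN^{-2s}
\end{equation*}
for $\xi$ in a radial annulus of area $\sim N$. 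Plancherel then yields $\|\mathcal{B}(f,g)(t)\|_{H^s}^2\gtrsim N^{2s}\cdot(tN^{-2s})^2\cdot N = t^2 N^{1-2s}$, so $\|\mathcal{B}(f,g)(T)\|_{H^s}\gtrsim TN^{1/2-s}$, which exceeds any prescribed $C\|f\|_{H^s}\|g\|_{H^s}\sim C$ for $N$ large when $s<\tfrac12$, verifying \eqref{flow_map_estimate}. The main delicate step will be the geometric lemma on the lens: since $\alpha,\beta\in\R$, the two level circles are tangent exactly at $|\xi|=N_3$, so the lens is pinched there, and one must move off the tangency by a constant and use the second-order expansion in the radial/tangential coordinates at $\xi_1^*$ to produce a disc of radius $\sim 1$ inside $L_\xi$; the quadratic form $\Phi$ is nondegenerate (its coefficient $\beta-\alpha\ne 0$ is automatic under $\theta=0$ since $\alpha=\beta\gamma/(\beta+\gamma)\ne \beta$), so the sublevel set $\{|\Phi|\lesssim 1\}$ is genuinely a disc rather than an unbounded strip, and this disc intersected with the lens provides the required $\sim 1$-sized region of coherent contribution.
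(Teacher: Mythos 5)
Your construction is essentially the paper's: under $\theta=0$ the resonance function is a definite quadratic form $(\beta-\alpha)|\xi_1-\xi_1^*(\xi)|^2$, and you take characteristic functions of unit-width annuli tuned so that $\xi_1^*(\xi)$ sits at the tangency of the two annuli when $|\xi|$ is near the output radius $\sim N$; the paper does exactly this (with its parameter $p=\gamma/(\alpha-\gamma)$) and also gets the divergence $N^{1/2-s}$ from the area $\sim N$ of the output annulus. The gap is in the step where you pass from the full lens $L_\xi$ to the unit disc $D$: the inequality $\bigl|\int_{L_\xi}\tfrac{e^{it\Phi}-1}{i\Phi}\,\widehat f\,\overline{\widehat g}\,d\xi_1\bigr|\ge \operatorname{Re}\int_{D}\tfrac{e^{it\Phi}-1}{i\Phi}\,\widehat f\,\overline{\widehat g}\,d\xi_1$ requires $\operatorname{Re}\int_{L_\xi\setminus D}=\int_{L_\xi\setminus D}\tfrac{\sin(t\Phi)}{\Phi}\,\widehat f\,\overline{\widehat g}\ge 0$, and your justification (``the integrand of the lens contribution is nonnegative'') is false. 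Because $\widehat f,\widehat g$ are radial, the lens is not a unit-sized region: in the direction transverse to $\xi$ it extends to $|\eta_\perp|\sim\sqrt N$ (the two unit-width annuli are tangent, so their intersection is a long thin sliver), and there $\Phi\sim(\beta-\alpha)\eta_\perp^2$ ranges up to $\sim N$. Hence $t\Phi$ sweeps through many periods of $\sin$, and $\sin(t\Phi)/\Phi$ changes sign on $L_\xi\setminus D$.

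Moreover this is not a removable technicality: the transverse integral $\int_1^{\sqrt N}\tfrac{\sin(t\eta_\perp^2)}{\eta_\perp^2}\,d\eta_\perp$ is of size $t^{1/2}$ (substitute $\eta_\perp=t^{-1/2}u$), which for $t\ll1$ \emph{dominates} your claimed main term $t\cdot|D|\sim t$ coming from the disc. So the complement cannot be discarded by positivity, and if its sign were adverse it would wipe out your main term entirely. The paper's proof spends most of its length on precisely this point: after reducing by radial symmetry to $\xi=\xi_c$ on the axis, it splits the lens into $E_1,\dots,E_4$, proves the Fresnel-type lower bound $\int_0^{(1+p)N^{\epsilon}}\tfrac{\sin(\tau(q+\eta_2^2))}{q+\eta_2^2}\,d\eta_2\gtrsim t^{1/2}$ by grouping $\eta_2$ into intervals on which $\sin(\tau(q+\eta_2^2))$ has a fixed sign and showing the positive blocks dominate, and bounds the far tail $|\eta_2|>N^{\epsilon}$ by the $1/\eta_2^2$ decay. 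You would need to reproduce this oscillatory analysis (obtaining the stronger bound $t^{1/2}N^{1/2-s}$ rather than $tN^{1/2-s}$), or else change the argument: one clean repair is to use the \emph{imaginary} part $\tfrac{1-\cos(t\Phi)}{\Phi}$ instead of $\tfrac{\sin(t\Phi)}{\Phi}$ --- since $\Phi=(\beta-\alpha)|\xi_1-\xi_1^*|^2$ has a fixed sign on all of $L_\xi$, that integrand is single-signed on the whole lens, and restricting it to the sub-region $|\eta_\perp|\sim t^{-1/2}$ (where $t\Phi\sim1$) already yields a contribution $\gtrsim t^{1/2}N^{-2s-1}$. As written, however, the pointwise lower bound $|\F_x\mathcal B(f,g)(t,\xi)|\gtrsim tN^{-2s}$ is not established, and the proof is incomplete at its central analytic step.
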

\begin{proof}
Let $N\gg 1$ and $p:=\frac{\gamma}{\alpha -\gamma}$\ $(\ne 0)$. 
We note that $p$ is well-defined since $\theta =0$ implies 
$\kappa \ne 0$ for $\alpha$, $\beta$, $\gamma \in \R\backslash\{0\}$. 
For simplicity, we assume $p>0$. 
Put
\[
\begin{split}
D_1&:=\{\xi \in \R^2|\ N\le |\xi |\le N+1\},\ \ 
D_2:=\{\xi \in \R^2|\ p^{-1}N\le |\xi|\le p^{-1}N+1\},\\
D&:=\{\xi \in \R^2|\ (1+p^{-1})N+1\le |\xi|\le (1+p^{-1})N+1+2^{-10}\}.
\end{split}
\]
We define the functions $f$ and $g$ as
\[
\widehat{f}(\xi ):=N^{-s-\frac{1}{2}}\ee_{D_1}(\xi),\ \ \widehat{g}(\xi ):=N^{-s-\frac{1}{2}}\ee_{D_2}(\xi).
\]
Clearly, we have $\|f\|_{H^s}\sim \|g\|_{H^s}\sim 1$ and $f$, $g$ are radial. 
For $\xi =(\xi_1,\xi_2)\in \R^2$ and $\eta=(\eta_1,\eta_2)\in \R^2$, we define
\[
\begin{split}
\Phi (\xi,\eta)&:= \alpha |\eta|^2-\beta |\xi-\eta|^2-\gamma |\xi|^2\\
&=(\alpha -\gamma)|\eta-p(\xi-\eta)|^2\\
&=(\alpha-\gamma)\left\{\left(\eta_1-p(\xi_1-\eta_1)\right)^2+\left(\eta_2-p(\xi_2-\eta_2)\right)^2\right\}
\end{split}
\]
because $\theta =0$ implies $\frac{\beta +\gamma}{\alpha -\gamma}=-\left(\frac{\gamma}{\alpha -\gamma}\right)^2$. 
We will show 
\[
\sup_{0\le t\le T}\left\|\int_{0}^{t}e^{i(t-t')\gamma \Delta}\nabla \left((e^{it'\alpha \Delta}f)(\overline{e^{it'\beta \Delta}g})\right)dt'\right\|_{H^{s}}
\gtrsim N^{-s+\frac{1}{2}}. 
\]
We calculate that
\[
\begin{split}
&\left\|\int_{0}^{t}e^{i(t-t')\gamma \Delta}\nabla \left((e^{it'\alpha \Delta}f)(\overline{e^{it'\beta \Delta}g})\right)dt'\right\|_{H^{s}}\\
&\gtrsim N^{-s}\left\|\ee_D(\xi)\int_0^t\int_{\R^2}e^{-it'\Phi(\xi,\eta)}\ee_{D_1}(\eta)\ee_{D_2}(\xi-\eta)d\eta\right\|_{L^2_{\xi}}\\
&\ge N^{-s}\left\|\ee_D(\xi)\int_0^t\int_{\R^2}\cos (t'\Phi(\xi,\eta))\ee_{D_1}(\eta)\ee_{D_2}(\xi-\eta)d\eta\right\|_{L^2_{\xi}}\\
&=:N^{-s}\left\|F(\xi )\right\|_{L^2_{\xi}}.
\end{split}
\]
Let $R:\R^2\rightarrow \R^2$ be a rotation operator. 
Since $\Phi (\xi,\eta)=\Phi(R\xi,R\eta)$ and $\ee_D$, $\ee_{D_1}$, $\ee_{D_2}$ are radial, 
we can see
\[
\begin{split}
F(\xi)&=
\ee_D(\xi)\int_{\R^2}\frac{\sin (t\Phi(\xi,\eta))}{\Phi (\xi,\eta)}\ee_{D_1}(\eta)\ee_{D_2}(\xi-\eta)d\eta\\
&=\ee_D(R\xi)\int_{\R^2}\frac{\sin (t\Phi(R\xi,R\eta))}{\Phi (R\xi,R\eta)}\ee_{D_1}(R\eta)\ee_{D_2}(R\xi-R\eta)d\eta\\
&=\ee_D(R\xi)\int_{\R^2}\frac{\sin (t\Phi(R\xi,\eta))}{\Phi (R\xi,\eta)}\ee_{D_1}(\eta)\ee_{D_2}(R\xi-\eta)d\eta\\
&=F(R\xi).
\end{split}
\]
It implies that $F$ is radial. 
Therefore, there exists $G:\R\rightarrow \R$ such that $F(\xi)=G(|\xi|)$. 
We note that
\[
\|F(\xi )\|_{L^2_{\xi}}=\|G(r)r^{\frac{1}{2}}\|_{L^2((0,\infty))}\gtrsim N^{\frac{1}{2}}\inf_{r>0}|G(r)|=N^{\frac{1}{2}}\inf_{(\xi_1,0) \in D}|F(\xi_1,0)|
\]
since ${\rm supp}G\subset [(1+p^{-1})N+1, (1+p^{-1})N+1+2^{-10}]$. 
Hence, it suffices to show that 
\begin{equation}\label{notC2_desire}
|F(\xi_c)|\gtrsim t^{\frac{1}{2}}
\end{equation}
for any $c\in [0,2^{-10}]$ and $0<t\ll 1$, where $\xi_c:=((1+p^{-1})N+1+c,0)\in \R^2$. 
Simple calculation gives
\begin{equation}\label{notC2_modu}
\Phi (\xi_c,\eta)=(\alpha-\gamma)\left\{\left((1+p)(\eta_1-N)-p(1+c)\right)^2+(1+p)^2\eta_2^2\right\}.
\end{equation}
We also observe that
\[
\begin{split}
&\ee_{D_1}(\eta)\ee_{D_2}(\xi_c-\eta)\ne 0\\
\Longrightarrow\ &\eta_1\le N+1\ {\rm and}\ (1+p^{-1})N+1+c-\eta_1\le p^{-1}N+1\\
\Longrightarrow\ &N+c\le \eta_1\le N+1.
\end{split}
\]
Let $\epsilon >0$ be small. We define a new set $E$ as
\[
E:=D_1\cap \{\eta =(\eta_1,\eta_2)\in \R^2|\ N+c\le \eta_1\le N+1\},
\]
and we decompose $E$ into four sets:
\[
\begin{split}
E_1&=\left\{
\begin{split}
&N+c+p^{-1}N+1-\sqrt{(p^{-1}N+1)^2-N^{2\epsilon}}\le \eta_1 <\sqrt{(N+1)^2-N^{2\epsilon}},\\
&|\eta_2|\le N^{\epsilon}
\end{split}
\right\},\\
E_2&=\{N+c\le \eta_1<N+c+p^{-1}N+1-\sqrt{(p^{-1}N+1)^2-N^{2\epsilon}},\ |\eta_2|\le N^{\epsilon}\}\cap E,\\
E_3&=\{\sqrt{(N+1)^2-N^{2\epsilon}}\le \eta_1\le N+1,\ |\eta_2|\le N^{\epsilon}\}\cap E,\\
E_4&=\{N^{\epsilon}<|\eta_2|\}\cap E.
\end{split}
\]
We can easily show that $E_i\cap E_j=\emptyset$ if $i\ne j$. 
Furthermore, we can obtain $E_1\subset E$ and
\[
\ee_{D_1}(\eta)\ee_{D_2}(\xi_c-\eta)=1
\]
for any $\eta \in E_1$. 
We observe that
\[
\begin{split}
|F(\xi_c)|
&\ge \left|\int_{\R^2}\frac{\sin (t\Phi (\xi_c,\eta))}{\Phi (\xi_c,\eta)}\ee_{E_1}(\eta)d\eta\right|
-\sum_{j=2}^4\int_{\R^2}\left|\frac{\sin (t\Phi (\xi_c,\eta))}{\Phi (\xi_c,\eta)}\right|\ee_{E_j}(\eta )d\eta\\
&=:I_1-\sum_{j=2}^4I_j.
\end{split}
\]

We first consider $I_1$. Let
\[
c':=p^{-1}N+1-\sqrt{(p^{-1}N+1)^2-N^{2\epsilon}},\ \ c'':=N+1-\sqrt{(N+1)^2-N^{2\epsilon}}.
\]
Obviously, it holds $c'\sim c''\sim N^{-1+2\epsilon}$. We calculate that
\[
\begin{split}
I_1
&=2 \left| \int_{N+c'+c''}^{N+1-c''}\left(\int_0^{N^{\epsilon}}\frac{\sin (t\Phi (\xi_c,\eta))}{\Phi(\xi_c,\eta)}d\eta_2\right)d\eta_1 \right| \\
&
=\frac{2}{(1+p) |\alpha-\gamma|} \left| \int_{N+c'+c''}^{N+1-c''}\left(\int_0^{(1+p)N^{\epsilon}}\frac{\sin (\tau (q(\eta_1)+\eta_2^2))}{q(\eta_1)+\eta_2^2}d\eta_2\right)d\eta_1 \right|,
\end{split}
\]
where $\tau:=|\alpha -\gamma|t$ and $q(\eta_1):=\left((1+p)(\eta_1-N)-p(1+c)\right)^2$.%
Therefore, if we obtain
\begin{equation}\label{notC2_I_1_est}
\inf_{\eta_1\in [N+c'+c'',N+1-c'']} \int_0^{(1+p)N^{\epsilon}}\frac{\sin (\tau (q(\eta_1)+\eta_2^2))}{q(\eta_1)+\eta_2^2}d\eta_2
\gtrsim t^{\frac{1}{2}}, 
\end{equation}
then we get $I_1\gtrsim t^{\frac{1}{2}}$. 
Let $t>0$ be small. We fix $\eta_1\in [N+c'+c'',N+1-c'']$ and 
write $q(\eta_1)=q$ for simplicity.
Clearly, we have $0\le q\lesssim 1$. 
We easily verify that if we restrict $\eta_2$ as $0\le \eta_2\le \sqrt{\pi \tau^{-1}-q}$, then 
we have $\sin (\tau (q+\eta_2^2))\ge 0$
and $\frac{\sin (\tau (q+\eta_2^2))}{q+\eta_2^2}$ is monotone decreasing. 
Similarly, if $\sqrt{\pi \tau^{-1}-q}\le \eta_2\le \sqrt{2\pi \tau^{-1}-q}$, then we see $\sin (\tau (q+\eta_2^2))\le 0$. 
We calculate
\[
\begin{split}
\int_0^{\sqrt{2\pi\tau^{-1}-q}}\frac{\sin (\tau (q+\eta_2^2))}{q+\eta_2^2}d\eta_2
&\ge \int_0^{\sqrt{\pi\tau^{-1}-q}}\frac{\sin (\tau (q+\eta_2^2))}{q+\eta_2^2}d\eta_2
-\int_{\sqrt{\pi\tau^{-1}-q}}^{\sqrt{2\pi\tau^{-1}-q}}\frac{1}{q+\eta_2^2}d\eta_2\\
&\ge \frac{2\tau}{\pi}\int_0^{\sqrt{\pi (2\tau)^{-1}-q}}d\eta_2-\frac{\tau}{\pi}\int_{\sqrt{\pi \tau^{-1}-q}}^{\sqrt{2\pi \tau^{-1}-q}}d\eta_2\\
&=\frac{\tau}{\pi}\left(2\sqrt{\pi (2\tau)^{-1}-q}-\sqrt{2\pi \tau^{-1}-q}+\sqrt{\pi \tau^{-1}-q}\right)\\
&\gtrsim t^{\frac{1}{2}}. 
\end{split}
\]
The last estimate is verified by the smallness of $\tau =|\alpha -\gamma |t$. 
We also see
\[
\int_{\sqrt{2n\pi \tau^{-1}-q}}^{\sqrt{2(n+1)\pi \tau^{-1}-q}}\frac{\sin (\tau (q+\eta_2^2))}{q+\eta_2^2}d\eta_2
\gtrsim \frac{t^{\frac{1}{2}}}{n^2}
\]
for any $n\in \N$. Therefore, we obtain (\ref{notC2_I_1_est}). 

Next, we consider $I_2$, $I_3$, and $I_4$. 
Since $|E_2|$, $|E_3|\lesssim N^{-1+3\epsilon}$, we easily observe that
\[
I_2+I_3\lesssim tN^{-1+3\epsilon}. 
\]
For $I_4$, we observe that
\[
I_4=\int_{E_4}\left|\frac{\sin (t\Phi (\xi_c,\eta))}{\Phi (\xi_c,\eta)}\right|d\eta
\lesssim t\int_{N+c}^{N+1}\left(\int_{N^{\epsilon}}^{\infty}\frac{1}{\eta_2^2}d\eta_2\right)d\eta_1\lesssim tN^{-\epsilon}. 
\]
By the above argument, we obtain
\[
|F(\xi_c)|\ge I_1-\sum_{j=2}^4I_j\gtrsim t^{\frac{1}{2}}-t(N^{-1+3\epsilon}+N^{-\epsilon})\gtrsim t^{\frac{1}{2}}
\]
for small $0<t\ll 1$. 
\end{proof}
\section*{acknowledgements}
This work was financially supported by 
JSPS KAKENHI Grant Number JP16K17624 and JP17K14220, 
Program to Disseminate Tenure Tracking System from the Ministry of Education, Culture, Sports, Science and Technology, 
and the DFG through the CRC 1283 "Taming uncertainty and
profiting from randomness and low regularity in analysis, stochastics
and their
applications".  

\end{document}